\tikzset{tab/.style={matrix of math nodes,column sep=-.35, row sep=-.35,text height=7pt,text width=7pt,align=center,inner sep=2,font=\footnotesize}}
\newif\iftikz
\newif\iflimitshapes
\newcommand{\imi}{\mathbf{i}} % imaginary i = \sqrt{-1}
\newcommand{\ZZsh}{\mathbb{S}}  % shifted by 1/2 ZZ
\newcommand{\inner}[2]{\langle #1, #2 \rangle}
\newcommand{\iso}{\cong}
\newcommand{\qbinom}[3]{\genfrac{[}{]}{0pt}{}{#1}{#2}_{#3}}
\newcommand{\abs}[1]{\lvert #1 \rvert}
\newcommand{\Abs}[1]{\lVert #1 \rVert}
\newcommand{\bra}[1]{\langle #1 \rvert}
\newcommand{\ket}[1]{\lvert #1 \rangle}
\newcommand{\braket}[2]{\langle #1 | #2 \rangle}
\newcommand{\bigO}{\mathcal{O}} % big O
\newcommand{\zcrit}{z_{\mathsf{c}}}  % critical z notation
\newcommand{\EV}{\mathbb{E}}  % expected value
\newcommand{\dfbinom}[2]{\left(\!\!\!\binom{#1}{#2}\!\!\!\right)}  % double factorial binomial
\newcommand{\ds}{\mathrm{ds}}
\newcommand{\dt}{\mathrm{dt}}
\newcommand{\dw}{\mathrm{dw}}
\newcommand{\dx}{\mathrm{dx}}
\newcommand{\dy}{\mathrm{dy}}
\newcommand{\dz}{\mathrm{dz}}
\newcommand{\dzeta}{\mathrm{d}\zeta}
\newcommand{\dnu}{\mathrm{d}\nu}
\DeclareMathOperator{\supp}{supp} % Support
\DeclareMathOperator{\shape}{sh} % shape
\DeclareMathOperator{\He}{He} % Hermite polynomials
\DeclareMathOperator{\Li}{Li} % dilog function
\DeclareMathOperator{\Var}{Var} % variance
\DeclareMathOperator{\tr}{tr} % trace
\renewcommand{\Re}{\operatorname{Re}}
\renewcommand{\Im}{\operatorname{Im}}
\newcommand{\Pin}{\operatorname{Pin}} % Pin group
\newcommand{\Sp}{Sp}
\newcommand{\SO}{SO}
\newcommand{\Or}{O}  % Orthogonal
\newcommand{\GL}{GL}
\newcommand{\mcC}{\mathcal{C}}
\newcommand{\mcK}{\mathcal{K}}
\newcommand{\ta}{\mathsf{a}}
\newcommand{\tb}{\mathsf{b}}
\newcommand{\tc}{\mathsf{c}}
\newcommand{\NN}{\mathbb{N}}
\newcommand{\PP}{\mathbb{P}}
\newcommand{\ZZ}{\mathbb{Z}}
\newcommand{\RR}{\mathbb{R}}
\newcommand{\CC}{\mathbb{C}}
\definecolor{darkred}{rgb}{0.7,0,0} % darkred color
\newcommand{\defn}[1]{{\color{darkred}\emph{#1}}} % emphasis of a definition
\definecolor{UQgold}{RGB}{196, 158, 54} % UQ gold
\definecolor{UQpurple}{RGB}{73, 7, 94} % UQ purple
\definecolor{HUgreen}{RGB}{33,98,26} % HU green = #21621a 
\lstdefinelanguage{Sage}[]{Python}
{morekeywords={False,sage,True},sensitive=true}
\definecolor{dblackcolor}{rgb}{0.0,0.0,0.0}
\definecolor{dbluecolor}{rgb}{0.01,0.02,0.7}
\definecolor{dgreencolor}{rgb}{0.2,0.4,0.0}
\definecolor{dgraycolor}{rgb}{0.30,0.3,0.30}
\theoremstyle{plain}
\newtheorem{thm}{Theorem}[section]
\newtheorem{lemma}[thm]{Lemma}
\newtheorem{conj}[thm]{Conjecture}
\newtheorem{prop}[thm]{Proposition}
\newtheorem{cor}[thm]{Corollary}
\theoremstyle{definition}
\newtheorem{remark}[thm]{Remark}
\numberwithin{equation}{section}
\begin{document}

\title{Limit shapes and fluctuations for $(\GL_{n},\GL_{k})$ skew Howe duality}

\author[D.~Betea]{Dan Betea}
\address[D.~Betea]{Université d’Angers, CNRS, LAREMA, SFR MATHSTIC, Angers, F-49045, France}
\email{dan.betea@gmail.com}
\urladdr{https://sites.google.com/view/danbetea}

\author[A.~Nazarov]{Anton Nazarov}
\address[A.~Nazarov]{Department of High Energy and Elementary Particle Physics, St.\ Petersburg State University, University Embankment, 7/9, St.\ Petersburg, Russia, 199034 and Beijing Institute of Mathematical Sciences and Applications (BIMSA),
Bejing 101408, People’s Republic of China}
\email{antonnaz@gmail.com}
\urladdr{http://hep.spbu.ru/index.php/en/1-nazarov}

\author[P.~Nikitin]{Pavel Nikitin}
\address[P.~Nikitin]{%Laboratory of Representation Theory and Dynamical Systems, St.\ Petersburg Department of Steklov Mathematical Institute of Russian Academy of Sciences, 27 Fontanka, St.\ Petersburg, Russia, 191023.
  Beijing Institute of Mathematical Sciences and Applications (BIMSA),
Bejing 101408, People’s Republic of China}
\email{pnikitin0103@yahoo.co.uk}
%%  
%%  \author[O.~Postnova]{Olga Postnova}
%%  \address[O.~Postnova]{Laboratory of Mathematical Problems of Physics, St.\ Petersburg Department of Steklov Mathematical Institute of Russian Academy of Sciences, 27 Fontanka, St.\ Petersburg, Russia, 191023}
%%  \email{postnova.olga@gmail.com}
%%  %\urladdr{https://???}
%%  
%% \author[D.~Sarafannikov]{Daniil Sarafannikov}
%% \address[D.~Sarafannikov]{Faculty of Mathematics and Computer Science, St.\ Petersburg State University, University Embankment, 7/9, St.\ Petersburg, Russia, 199034}
%% \email{sardanis123@gmail.com}
%% 
\author[T.~Scrimshaw]{Travis Scrimshaw}
\address[T.~Scrimshaw]{Department of Mathematics, Hokkaido University, 5 Ch\=ome Kita 8 J\=onishi, Kita Ward, Sapporo, Hokkaid\=o 060-0808}
\email{tcscrims@gmail.com}
\urladdr{https://tscrim.github.io/}

\keywords{limit shape, skew Howe duality, lozenge tiling, Aztec diamond, $q$-Krawtchouk ensemble}
\subjclass[2010]{05A19, 60C05, 60G55}
% 05E05: Symmetric functions and generalizations
% 60C05: Combinatorial probability
% 43A65: Representations of groups, semigroups, etc. (aspects of abstract harmonic analysis)
% 22E46: Semisimple Lie groups and their representations
% 60G55: Point processes (e.g., Poisson, Cox, Hawkes processes)
% 05A19: Combinatorial identities, bijective combinatorics

\begin{abstract}
  We consider the probability measures on Young diagrams in the $n\times k$ rectangle obtained by piecewise-continuously differentiable specializations of Schur polynomials in the dual Cauchy identity.
  We use a free fermionic representation of the correlation kernel to study its asymptotic behavior and derive the uniform convergence to a limit shape of Young diagrams in the limit $n,k\to\infty$.
  More specifically, we show the bulk is the discrete sine kernel with boundary fluctuations generically given by the Tracy--Widom distribution with the Airy kernel.
  When our limit shape touches the boundary corner of the rectangle, the fluctuations with a second order correction are given by the discrete Hermite kernel, and we recover the discrete distribution of Gravner--Tracy--Widom (2001) restricting to the leading order.
  Finally, we demonstrate our limit shapes can have sections with no or full density of particles, where the Pearcey kernel appears when such a section is infinitely small.
\end{abstract}

\maketitle

\tableofcontents

%===============================================================================
\section{Introduction and main results}
\label{sec:introduction}

%% Write an introduction on the difference between symmetric and skew
%% Howe duality and relation of symmetric Howe duality to the Schur
%% measures

Skew Howe duality~\cite{howe1989remarks} for $(\GL_{n},\GL_{k})$ states that the natural action of the Lie group $\GL_{n} \times \GL_{k}$ on the exterior algebra $\bigwedge(\CC^{n} \boxtimes \CC^{k})$ has a multiplicity-free decomposition into irreducible representations
\begin{equation}
  \label{eq:exterior-algebra-decomposition}
  \bigwedge\left(\CC^{n}\boxtimes \CC^k \right) \iso
  \bigoplus_{\lambda} V_{\GL_{n}}(\lambda) \boxtimes V_{\GL_{k}}(\lambda'),
\end{equation}
where the sum is taken over all partitions $\lambda$ contained inside of an $n \times k$ rectangle.
By taking characters, we obtain the classical dual Cauchy identity
\begin{equation}
  \label{eq:dual-Cauchy}
  \sum_{\lambda \subseteq k^n} s_{\lambda}(x_1, \dotsc, x_n) s_{\lambda'}(y_1, \dotsc, y_k) = \prod_{i=1}^n \prod_{j=1}^k (1 + x_i y_j).
\end{equation}
The dual Cauchy identity then yields a probability measure on Young diagrams $\lambda$ inside of the $n\times k$ rectangle, with $\lambda'$ denoting a conjugate diagram, depending on the non-negative character specialization parameters $X = (x_1, \dotsc, x_n)$ and $Y = (y_1, \dotsc, y_k)$:
\begin{equation}
  \label{eq:GL-probability-measure}
  \mu_{n,k}(\lambda|\{x_{i}\}_{i=1}^{n},\{y_{j}\}_{j=1}^{k})=\dfrac{s_{\lambda}(x_1,
    \dotsc, x_n) s_{\lambda'}(y_1, \dotsc, y_k)}{\displaystyle \prod_{i=1}^n
    \prod_{j=1}^k (1 + x_i y_j)}. 
\end{equation}
The goal of this manuscript is to study the measure~\eqref{eq:GL-probability-measure} in the limit as $n, k \to \infty$.

The character $s_{\lambda}(x_{1},\dotsc,x_{n})$ is the Schur polynomial, which can be written as a sum over all semi-standard Young tableaux of the shape $\lambda$ with entries at most $n$:
\begin{equation}
  \label{eq:character}
  s_{\lambda}(x_{1},\dotsc,x_{n})=\sum_{T\in SSYT(\lambda|n)}\prod_{i=1}^{n}x_{i}^{T_{i}},
\end{equation}
where $T_{i}$ is number of boxes with the value $i$ in the tableau $T$.
An alternative way to prove the dual Cauchy identity~\eqref{eq:dual-Cauchy} is by using the dual Robinson--Schensted--Knuth (RSK) algorithm~\cite{knuth1970permutations} (in the nomenclature of~\cite[Ch.~7]{ECII}) to show pairs of semistandard Young tableaux of shape $\lambda$ with entries $\leq n$ and $\lambda'$ with entries $\leq k$ are in one-to-one correspondence with the $n \times k$ matrices of zeros and ones.
Thus, the dual RSK bijection can be used to sample random diagrams from the distribution~\eqref{eq:GL-probability-measure} (see Section~\ref{sec:skew-howe-duality} for precise details). % by taking random matrices with Bernoulli random variables with the $(i,j)$-th entry having parameter $x_i y_j$.
Moreover, this leads to another interpretation as induced from a certain measure on lozenge tilings of the ``skew'' hexagon glued from two trapezoids or on domino tilings of the Aztec diamond glued from two rectangular parts.

Let us describe this relationship in more detail.
Each Young tableau $T$ corresponds to a tiling of a trapezoid or half hexagon in the following way.
There are $\ell$ horizontal lozenges on vertical line number $\ell$, their positions are encoded by the number of boxes in rows of the tableau $T$ with values not larger than $\ell$.
In another combinatorial language, the positions are given by the Gelfand--Tsetlin pattern corresponding to $T$.
Row lengths of the diagram $\lambda$ correspond to positions on the rightmost vertical line.
The positions on the conjugate diagrams are complementary, therefore we need to cut horizontal lozenges in triangles (on the rightmost vertical line) and glue two half hexagons together to obtain the measure~\eqref{eq:GL-probability-measure} as depicted in left panel of Fig.~\ref{fig:lozenge-aztec-tilings}.
Alternatively, we can obtain the measure~\eqref{eq:GL-probability-measure} by considering the tiling of the $(n+k)\times (n+k)$ Aztec diamond that is glued from the rectangular parts of sizes $n\times (n+k)$ and $k\times (n+k)$, depicted on the right panel of Fig.~\ref{fig:lozenge-aztec-tilings}.
Here we require that only three of four\footnote{While there are only horizontal and vertical dominoes, we consider the Aztec diamond to be a checkerboard, yielding the four types of dominoes once we take into account the checkerboard coloring.} possible types of dominoes are used to tile these rectangular parts.
Reformulating this, we study the distribution on the gluing line in this paper.
In contrast to ordinary tilings that are connected to Howe duality (corresponding to the action on $\operatorname{Sym}(\CC^n \boxtimes \CC^k)$ and yielding the Cauchy identity), these skew tilings appear to be less studied.

\begin{figure}
\includegraphics[width=0.46\linewidth]{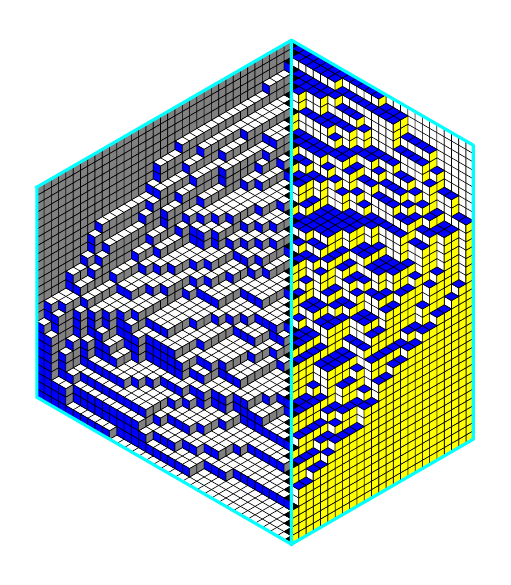}
\includegraphics[width=0.52\linewidth]{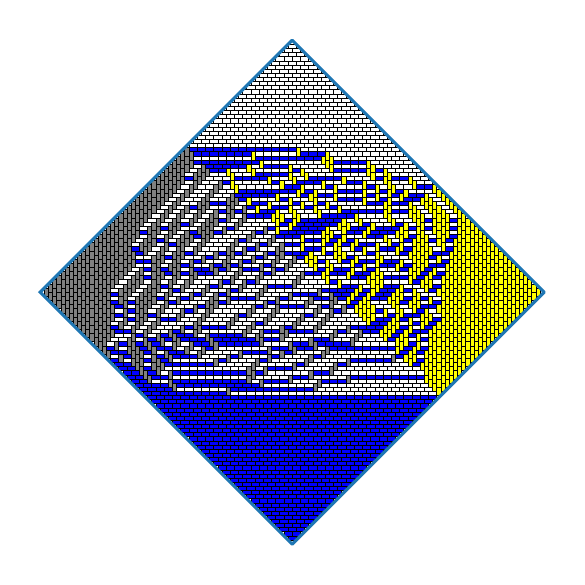}
  \caption{Lozenge tiling of a skew hexagon and corresponding domino
    tiling of Aztec diamond glued from two rectangles. Positions of
    three types of lozenges correspond to positions of
    three types of dominoes in the Aztec diamond. Only three
    types out of four are used in each part of domino tiling. }
  \label{fig:lozenge-aztec-tilings}
\end{figure}

This construction is a special case of the more general Schur process~\cite{Okounkov01}, and as such, it is known to be a determinantal point process~\cite{OR03}.
Contrary to the usual definition of Schur measure as $\mu(\lambda|X,Y)=s_{\lambda}(X)s_{\lambda}(Y)\prod_{i,j=1}^{\infty}(1-x_{i}y_{j})$ that is defined starting with Cauchy identity for Schur polynomials, we do not need to require $x_{i}, y_{j}$ to be less than $1$ for all $i,j$.
Moreover the transition to the Miwa variables $p_{1},p_{2},\ldots$, where $p_{\ell} = p_{\ell}(X) = \frac{1}{\ell} \sum_{i=1}^{\infty} x_i^{\ell}$ is the (rescaled) powersum symmetric function, is not needed in the skew case that we consider.

Our first result is integral representation for the correlation kernel that we obtain using free fermions (see, \textit{e.g.},~\cite[Ch.~14]{kac90} or~\cite{Okounkov01}) following the approach in~\cite{Okounkov01,OR03}.
In this paper, we will always use $\imi = \sqrt{-1}$ and $i$ as a variable (typically an indexing variable).

\begin{thm}[Correlation kernel]
  \label{thm:determinantal-ensemble-corr-kernel}
  Let $a_i := \lambda_i - i + \frac{1}{2}$.
  The measure
  $\mu_{n,k}( \lambda |\{x_{i}\}_{i=1}^{n},\{y_{j}\}_{j=1}^{k})$
  is a determinantal ensemble
  \begin{equation}
    \label{eq:measure-determinantal}
    \mu_{n,k}( \lambda |\{x_{i}\}_{i=1}^{n},\{y_{j}\}_{j=1}^{k})=
    \det\bigl[ \mcK(a_{i},a_{j}) \bigr]_{i,j=1}^{n},  
  \end{equation}
  where the correlation kernel $\mathcal{K}(m,m')$ has an integral representation
  \begin{equation}
    \label{eq:correlation-kernel-integral-representation}
    \mcK(m,m')=\oint\oint_{\abs{w}<\abs{z}}\frac{dz}{2\pi \imi z}
    \frac{dw}{2\pi \imi w} \frac{K(z)}{K(w)} z^{-m}w^{m'}\frac{\sqrt{zw}}{z-w},
  \end{equation}
  with
  \begin{equation}
    \label{eq:Fz-expression}
    K(z)=\prod_{i=1}^{n}\frac{1}{1-x_{i}z}\prod_{j=1}^{k}\frac{1}{1+y_{j}/z},
  \end{equation}
 and the contour for $z$ contains $-y_j$ for all $j$ and does not contain $x_i^{-1}$ for all $i$, and the contour for $w$ encircles zero. 
\end{thm}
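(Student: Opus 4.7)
The plan is to use the free fermion bosonization framework of Okounkov and Okounkov--Reshetikhin. I work on the charge-zero sector of fermionic Fock space, identifying a partition $\lambda$ with the Maya diagram with particles at positions $a_i = \lambda_i - i + \frac{1}{2}$. Introduce the half vertex operators $\Gamma_\pm(x)$ and their duals $\Gamma'_\pm(y)$, characterized by
\[
\langle\mu|\Gamma_+(x)|\lambda\rangle = s_{\lambda/\mu}(x), \qquad \langle\lambda|\Gamma'_-(y)|\mu\rangle = s_{\lambda'/\mu'}(y),
\]
together with their adjoints. Setting $A = \Gamma_+(x_1)\cdots\Gamma_+(x_n)$ and $B = \Gamma'_-(y_1)\cdots\Gamma'_-(y_k)$, the numerator of \eqref{eq:GL-probability-measure} is $\langle\emptyset|A|\lambda\rangle\langle\lambda|B|\emptyset\rangle$, and the cross-commutation $\Gamma_+(x)\Gamma'_-(y) = (1+xy)\Gamma'_-(y)\Gamma_+(x)$, together with $\Gamma_+(x)|\emptyset\rangle = |\emptyset\rangle$ and $\langle\emptyset|\Gamma'_-(y) = \langle\emptyset|$, gives $\langle\emptyset|AB|\emptyset\rangle = \prod_{i,j}(1+x_iy_j)$, recovering the dual Cauchy identity \eqref{eq:dual-Cauchy}.

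Next, the event that the particle configuration of $\lambda$ equals exactly $\{a_1,\ldots,a_n\}$ corresponds to inserting $\prod_i \psi_{a_i}\psi^*_{a_i}$ between $\langle\emptyset|A$ and $B|\emptyset\rangle$, normalized by $\langle\emptyset|AB|\emptyset\rangle$. Since $A|\emptyset\rangle$ and $\langle\emptyset|B$ are quasi-free (Gaussian) fermionic states, Wick's theorem reduces the resulting expectation to a determinant of the two-point function
\[
\mcK(m,m') = \frac{\langle\emptyset|A\, \psi_m\psi^*_{m'}\, B|\emptyset\rangle}{\langle\emptyset|AB|\emptyset\rangle},
\]
yielding the $n \times n$ determinantal structure in \eqref{eq:measure-determinantal}.

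To obtain the integral representation, repackage the modes into the generating fields $\psi(z) = \sum_m z^{m-1/2}\psi_m$ and $\psi^*(w) = \sum_{m'} w^{-m'-1/2}\psi^*_{m'}$ and commute them past $A$ and $B$ using the standard identities $\Gamma_+(x)\psi(z)\Gamma_+(x)^{-1} = (1-xz)^{-1}\psi(z)$ and the analogous relation for $\Gamma'_-(y)$, whose sign is flipped by the duality. Each $\Gamma_+(x_i)$ then contributes a factor $(1-x_iz)^{-1}$ to the pushed $\psi(z)$ and each $\Gamma'_-(y_j)$ contributes $(1+y_j/z)^{-1}$, assembling into $K(z)$ from \eqref{eq:Fz-expression}, with the reciprocals coming out of the corresponding commutations with $\psi^*(w)$. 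Combined with the bare vacuum two-point function $\langle\emptyset|\psi(z)\psi^*(w)|\emptyset\rangle = \sqrt{zw}/(z-w)$, valid for $|w|<|z|$, this yields
\[
\frac{\langle\emptyset|A\, \psi(z)\psi^*(w)\, B|\emptyset\rangle}{\langle\emptyset|AB|\emptyset\rangle} = \frac{K(z)}{K(w)}\cdot\frac{\sqrt{zw}}{z-w},
\]
and extracting the mode $(m,m')$ by double contour integration against $z^{-m}w^{m'}$ produces \eqref{eq:correlation-kernel-integral-representation}. The contour for $w$ is a small loop around $0$ so that the geometric expansion of $(z-w)^{-1}$ converges, while the contour for $z$ must encircle the poles $-y_j$ of $K(z)$ generated by commuting $\psi(z)$ past the $\Gamma'_-(y_j)$ factors but avoid the singularities $x_i^{-1}$ of the $(1-x_iz)^{-1}$ factors.

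The main obstacle is bookkeeping rather than conceptual: tracking the sign flips caused by the duality in $\Gamma'_-$ (which converts $(1-yz)$ into $(1+yz)$ and is responsible for the asymmetric structure of $K(z)$), and matching the contour conditions to the operator ordering used when moving fields past vertex operators so that the underlying geometric series converge. Once these conventions are pinned down, the calculation runs in parallel with the standard Schur-measure derivation.
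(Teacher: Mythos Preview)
Your proposal is correct and follows essentially the same approach as the paper: both use the Okounkov free-fermion formalism, express the measure via half-vertex operators $\Gamma_+$ and $\Gamma'_-$, and obtain the kernel by commuting the fermion fields $\psi(z),\psi^\dagger(w)$ through these operators to produce the factor $K(z)/K(w)$ times the vacuum two-point function $\sqrt{zw}/(z-w)$. You are slightly more explicit than the paper in invoking Wick's theorem to justify the determinantal structure (the paper's Section~\ref{sec:free-ferm-repr} proceeds directly to the kernel computation), but the argument is otherwise identical in substance.
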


We are interested in the limit $n,k\to\infty$ such that $\lim\frac{k}{n}=c$.
We need a way to specify the parameters $x_{i},y_{j}$ in a meaningful way for the limiting procedure to work.
Since Schur polynomials are symmetric, we can always assume the sequences $\{x_{i}\}_{i=1}^{n}$ and $\{y_{j}\}_{j=1}^{k}$ to be non-decreasing.
We can also assume $x_{i},y_{j}\neq 0$, since $s_{\lambda}(x_{1},\dotsc,x_{n},0,\dotsc,0) = s_{\lambda}(x_{1},\dotsc, x_{n})$. 
(Note that lozenge tilings of a skew hexagon and domino tilings of Aztec diamond glued from two rectangles depend upon the order of specialization parameters as each tiling correspond to a monomial in Schur polynomial; contrast Fig.~\ref{fig:gl-pearcey-lozenge-tiling} with Fig.~\ref{fig:gl-pearcey-lozenge-tiling-different-order} below.)
We derive the bulk asymptotics of the correlation kernel and demonstrate it convergence to the discrete sine kernel.
%Let $\mcP\mcC^1([0,1])$ denote the set of piece-wise continuous functions $f \colon [0,1] \to \RR$.

\begin{thm}[Bulk asymptotics]
  \label{thm:correlation-kernel-bulk}
  Assume that $f,g \colon [0, 1] \to \RR_{\geq 0}$ are piecewise $\mcC^1$ functions and $f(s) > 0, g(s) \geq 0$. 
  %% the Riemann integrals $\int_{0}^{1}\frac{\ds}{(1-f(s)z)^{2}}$, $\int_{0}^{1}\frac{\ds}{(z+g(s))^{2}}$ converge for all $z$ in a neighborhood of the integration contours of the correlation kernel
  Assume the equation 
  \begin{equation}
    \label{eq:two-root-requirement-for-f-and-g}
    \int_{0}^{1} \ds \frac{f(s)}{(1-f(s)z)^2}
    - c \int_{0}^{1} \ds \frac{g(s) }{(z+g(s))^{2}} = 0
  \end{equation}
  has real roots $z^{(i)} \in \RR \sqcup \{\infty\}$ for $i = 1, \dotsc, m$.
  Set
  \begin{equation}
    \label{eq:x-plus-x-minus}
    t^{(i)} = \int_{0}^{1} \ds \frac{f(s)z^{(i)}}{1 - f(s)z^{(i)}} +
    c \int_{0}^{1} \ds \frac{g(s)}{z^{(i)}+g(s)},
  \end{equation}
  and without loss of generality, we take $-1 =: t^{(0)} \leq t^{(1)} \leq t^{(2)} \leq \cdots \leq t^{(m)} \leq t^{(m+1)} := c$.
  Then for all $a = 0, \dotsc, m$ the equation
  \begin{equation}
    \label{eq:zdz-S-eq-zero-Th}
    \int_{0}^{1} \ds \frac{f(s)z}{1 - f(s)z} + c \int_{0}^{1} \ds \frac{g(s)}{z+g(s)} - t = 0
  \end{equation}
  either has exactly two complex conjugate roots $z_{1}(t),z_{2}(t)= \overline{z_{1}(t)}$ for all $t\in (t^{(a)}, t^{(a+1)})$ or only real roots for all $t \in (t^{(a)}, t^{(a+1)})$.

  Assume that $\lim\frac{k}{n} = c$ as $n,k\to\infty$.
  Assume $a$ is such that we have two complex conjugate roots in $(t^{(a)}, t^{(a+1)})$, which we call a \defn{support interval}.
  Then for $t \in [t^{(a)},t^{(a+1)}] \subseteq [-1,c]$, integers $l,l'$, and specializing the parameters $x_{i} = f\bigl(\frac{i}{n}\bigr)$, $y_{j} = g\bigl(\frac{j}{k}\bigr)$,  we have
  \begin{equation}
    \label{eq:correlation-kernel-bulk-limit}
    \lim_{n\to\infty}\mathcal{K}(nt+l,nt+l') = \begin{cases}
        \dfrac{\sin\bigl( \pi \rho(t) \cdot (l-l') \bigr)}{\pi(l-l')} & \text{if } l \neq l', \\
        \rho(t) & \text{if } l = l',
      \end{cases}
  \end{equation}
  where $\rho(t) = \frac{1}{\pi} \arg z_{1}(t)$ is given by the
  argument of the solution $z_1(t)$ of Equation~\eqref{eq:zdz-S-eq-zero-Th} as a function of $t$.
\end{thm}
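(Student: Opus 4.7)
The plan is to analyze the double contour representation of Theorem~\ref{thm:determinantal-ensemble-corr-kernel} by saddle point / steepest descent, following the strategy of Okounkov--Reshetikhin. Substituting $m = nt + l$ and $m' = nt + l'$ isolates the large parameter $n$ in the integrand via
\[
  \frac{K(z)}{K(w)}\,z^{-m} w^{m'} = e^{n(S_n(z) - S_n(w))}\, z^{-l} w^{l'}, \qquad S_n(z) := \tfrac{1}{n}\log K(z) - t\log z.
\]
Under the specialization $x_i = f(i/n)$, $y_j = g(j/k)$ and $k/n \to c$, the Riemann sum $\tfrac{1}{n}\log K(z)$ converges uniformly on compacta disjoint from $\{f(s)^{-1}\}\cup\{-g(s)\}$ to
\[
  \widetilde K(z) := -\int_0^1 \log(1-f(s)z)\,\ds - c\int_0^1 \log(1+g(s)/z)\,\ds,
\]
so $S_n \to S := \widetilde K - t\log z$ with the same regularity. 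Differentiating $S$ and multiplying through by $z$ identifies the saddle equation $zS'(z)=0$ with Equation~\eqref{eq:zdz-S-eq-zero-Th}.

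For $t$ in the interior of a support interval $(t^{(a)}, t^{(a+1)})$, the hypothesis on~\eqref{eq:two-root-requirement-for-f-and-g} guarantees that $zS'(z)=0$ has exactly two simple roots, a complex conjugate pair $z_1(t),\overline{z_1(t)}$ with $\Im z_1(t)>0$, depending continuously on $t$ by the implicit function theorem. Next, I deform the original contours to closed curves $\Gamma_z, \Gamma_w$ that both pass through the two saddles, chosen so that $\Gamma_z$ is steepest descent for $\Re S$ and $\Gamma_w$ is steepest ascent for $\Re S$, while each still satisfies its pole-avoidance constraint ($\Gamma_z$ encircles the set $\{-g(s)\}$ but no $f(s)^{-1}$; $\Gamma_w$ encloses only the origin). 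The contours cross transversely at each saddle, and on one arc of $\Gamma_z$ joining $\bar z_1$ to $z_1$ the deformation of the $w$-contour has swept across the polar locus $\{w=z\}$; on that arc the inner integral collects the residue at $w=z$.

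A short residue computation (using $\left.K(z)/K(w)\right|_{w=z}=1$ and the appropriate branch of $\sqrt{zw}$ at $w=z$) reduces the kernel to
\[
  \mcK(nt+l, nt+l') = \frac{1}{2\pi\imi} \int_{\bar z_1(t)}^{z_1(t)} z^{l'-l-1}\,\dz + \iint_{\Gamma_z\times\Gamma_w}[\,\text{remainder}\,].
\]
The remainder double integral is $o(1)$ by Laplace's method: $\Re(S(z)-S(w))$ is strictly negative off the saddles on the steepest contours, and the local Gaussian contribution at each saddle is $O(n^{-1})$. The single arc integral evaluates in closed form---for $l=l'$ the principal-branch logarithm yields $\pi^{-1}\arg z_1(t)=\rho(t)$, while for $l\neq l'$ the trigonometric identity $z_1^{l'-l}-\overline{z_1}^{l'-l}=2\imi|z_1|^{l'-l}\sin((l'-l)\arg z_1)$ together with an innocuous diagonal gauge of the correlation kernel (which leaves the determinantal measure $\mu_{n,k}$ invariant) produces the right-hand side of~\eqref{eq:correlation-kernel-bulk-limit}.

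The principal technical obstacle is constructing the global steepest-descent contours $\Gamma_z, \Gamma_w$ with the required pole-avoidance: one must analyze the topology of the level set $\{\Re S = \Re S(z_1)\}$ on $\CC^\times$ and confirm that the descent curve through $z_1$ closes into a simple closed $\Gamma_z$ not blocked by any $f(s)^{-1}$, and symmetrically for $\Gamma_w$ relative to $\{-g(s)\}\cup\{0\}$. The hypothesis on~\eqref{eq:two-root-requirement-for-f-and-g} is precisely what keeps this topology locally constant inside a support interval, enabling the continuous deformation. A secondary point of care is the uniform Riemann-sum convergence $S_n \to S$ on the deformed contours, which is handled by splitting the sum at the finitely many points of discontinuity of $f'$ and $g'$.
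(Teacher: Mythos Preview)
Your outline of the steepest-descent argument for the sine-kernel limit is essentially the same as the paper's: both approaches follow Okounkov--Reshetikhin, deforming the $z$ and $w$ contours through the complex conjugate saddle pair and reading off the residue at $w=z$ as the arc integral $\tfrac{1}{2\pi\imi}\int_{\bar z_1}^{z_1} z^{l'-l-1}\,\dz$. The paper is in fact briefer than you are on this part, essentially citing~\cite{okounkov2007random}.

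However, there is a genuine gap. The theorem has two conclusions, and you address only the second. The first is the dichotomy: for $t$ in each interval $(t^{(a)},t^{(a+1)})$, Equation~\eqref{eq:zdz-S-eq-zero-Th} has either exactly one complex conjugate pair or only real roots. You write that ``the hypothesis on~\eqref{eq:two-root-requirement-for-f-and-g} guarantees that $zS'(z)=0$ has exactly two simple roots, a complex conjugate pair,'' but this is the conclusion, not the hypothesis. Nothing in the assumptions rules out, say, two or three conjugate pairs of non-real saddles, and if that happened the contour deformation you sketch would not close up correctly (which pair do the steepest-descent curves pass through?). The paper spends most of Section~\ref{sec:bulk-asymptotics} on exactly this point. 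Their argument is: for finite $n$, the equation $z\partial_z S_n(z)=0$ is a rational equation of degree $n+k$ whose left side $T_n(z)$ has simple poles at each $f(i/n)^{-1}$ and $-g(j/k)$ with alternating blow-up signs, forcing at least $n+k-2$ real roots by the intermediate value theorem, hence at most one conjugate pair. They then pass this to the $n\to\infty$ limit by writing $\widehat T_n$ in factored form and controlling the product $\prod_j \frac{z-w_j}{z+g(j/k)}$ via total-variation bounds on $f$ and $g$, showing that $\widehat T_n(z)$ stays uniformly bounded away from zero for $z$ off the real line and away from the single conjugate pair. This root-counting-and-limiting step is the substantive content missing from your proposal; the rest of your argument only goes through once it is in place.
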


Since we can rearrange the specialization parameters to be non-decreasing, we could also assume our functions $f, g$ are non-decreasing.
We will generally assume that the intervals $[-1, t^{(1)}]$ and $[t^{(m)}, c]$ will only contian real roots.
We note there is a canonical way to extend the limit density $\rho(t)$ as either $0$ or $1$ for these intervals that only has real roots, which we will make precise in Section~\ref{sec:bulk-asymptotics}.
We will mostly concentrate on the case where there is a single support interval as the analysis for each (open) support interval is the same.
For convenience in this case, we will denote the support interval $(t_-, t_+)$ and hence $z_- := z^{(1)}$, $z_+ := z^{(2)}$ be the corresponding real roots of~\eqref{eq:two-root-requirement-for-f-and-g}.

Note that if conditions on $f, g$ are weakened, 
Equation~\eqref{eq:zdz-S-eq-zero-Th} can have several pairs of complex conjugate roots or a pair of complex conjugate roots (and several real roots).
In these cases, careful analysis of the integration contours in~\eqref{eq:correlation-kernel-integral-representation} is required.
Hence, our results are generic but not very explicit, but these conditions are satisfied in many natural examples of specializations.
Indeed, in examples we present in this paper in Sections~\ref{sec:examples} and~\ref{sec:princ-spec-q-krawtchouk}, we only have one pair of roots (subsequently, in each continuous section for the examples in Section~\ref{sec:piec-const-spec}).
Furthermore, it will be possible to deform the integration contours in such a way that only a single pair of complex conjugate roots contribute to the computations and the convergence of the correlation kernel to the discrete sine kernel is established.

%% Note that by the Cauchy--Schwarz inequality for the $L^2([0,1])$ inner product, we have that
%% \begin{equation}
%% \absval{\int_0^1 \dx f(x)} = \abs{ \inner{\mathbf{1}}{f} } \leq \Abs{\mathbf{1}}_2 \cdot \Abs{f}_2 = 1 \cdot \int_0^1 \ds f(s)^2,
%% \end{equation}
%% which converges, and hence
%% $
%% \int_{0}^{1}\frac{\ds}{1-f(s)z},
%% %\hspace{30pt}
%% \int_{0}^{1}\frac{\ds}{z+g(s)},
%% $
%% also converge.
%% 

As a corollary, we demonstrate that random Young diagrams under the measure~\eqref{eq:GL-probability-measure} converge in probability (for fixed $f$ and $g$) to a limit shape that is described by a solution of a certain equation.
We use the assumptions and notation of Theorem~\ref{thm:correlation-kernel-bulk}.

\begin{thm}[Limit shape]
  \label{thm:limit-shape}
  The upper boundary $F_n$ of a rotated and scaled Young diagram $\lambda$
  %a highest weight in the decomposition of exterior algebra $\bigwedge(\CC^n \boxtimes \CC^k) $ into irreducible $\GL_{n}\times\GL_{k}$-representations
  converges pointwise in probability with respect to the probability measure~\eqref{eq:GL-probability-measure} to the limit shape on each support interval, as defined in Equation~\eqref{eq:x-plus-x-minus}, given by the formula
  \begin{equation}
    \label{eq:limit-shape-as-integral}
    \Omega(u) = 1 + \int_{-1}^{u} \dt \; \bigl( 1-2 \rho(t) \bigr),
  \end{equation}
  where the limit density is $\rho(t)$ given above.
\end{thm}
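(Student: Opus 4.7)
The plan is to convert the statement about the boundary profile $F_n$ into a law of large numbers for the particle counting function of the configuration $\{a_i = \lambda_i - i + \tfrac{1}{2}\}$ and then exploit the determinantal structure provided by Theorem~\ref{thm:determinantal-ensemble-corr-kernel}. In Russian (rotated) coordinates the upper boundary of the Young diagram is piecewise linear with slope $-1$ at particle positions and $+1$ at holes, so, writing $N_n(u) := \#\{i : a_i \leq nu\}$ for the rescaled counting function, a direct check yields
\begin{equation*}
F_n(u) = F_n(-1) + (u + 1) - \frac{2}{n} N_n(u),
\end{equation*}
with $F_n(-1) = 1$ and total mass $\int_{-1}^{c} \rho(t)\,\dt = 1$ forcing consistency at $u = c$. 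Comparing with~\eqref{eq:limit-shape-as-integral}, the theorem reduces to showing $\tfrac{1}{n} N_n(u) \to \int_{-1}^{u} \rho(t)\, \dt$ in probability for each fixed $u \in [-1, c]$.

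The first moment is obtained by summing the diagonal of the kernel:
\begin{equation*}
\frac{1}{n}\EV[N_n(u)] = \frac{1}{n}\sum_{m \leq nu}\mcK(m,m).
\end{equation*}
On each open support interval Theorem~\ref{thm:correlation-kernel-bulk} supplies the pointwise limit $\mcK(\lfloor nt \rfloor, \lfloor nt \rfloor) \to \rho(t)$; on the complementary intervals where~\eqref{eq:zdz-S-eq-zero-Th} has only real roots, a parallel (and simpler) steepest-descent argument forces $\mcK(\lfloor nt \rfloor, \lfloor nt \rfloor) \to 0$ or $\to 1$, matching the canonical extension of $\rho$ discussed after Theorem~\ref{thm:correlation-kernel-bulk}. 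Since $\mcK(m,m) \in [0,1]$ as a one-point correlation of a valid determinantal ensemble, bounded convergence for Riemann sums yields $\tfrac{1}{n}\EV[N_n(u)] \to \int_{-1}^u \rho(t)\, \dt$.

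For the second moment, the determinantal identity gives
\begin{equation*}
\Var(N_n(u)) = \sum_{m \leq nu}\mcK(m,m)\bigl(1-\mcK(m,m)\bigr) - \sum_{\substack{m, m' \leq nu \\ m \neq m'}} \mcK(m,m')\mcK(m',m).
\end{equation*}
Combined with non-negativity of the variance, the bound $\mcK(m,m) \in [0,1]$, and the off-diagonal decay that follows from a steepest-descent analysis of~\eqref{eq:correlation-kernel-integral-representation}, this yields $\Var(N_n(u)) = O(n)$, hence $\Var(\tfrac{1}{n} N_n(u)) \to 0$. Chebyshev's inequality then delivers convergence in probability.

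The principal technical obstacle is turning the pointwise convergence of the kernel diagonal supplied by Theorem~\ref{thm:correlation-kernel-bulk} into convergence of the Riemann sum for $\EV[N_n(u)]$: one needs uniformity of the saddle-point asymptotics in small windows about the transition points $t^{(a)}$ and the endpoints $-1, c$. Because of the a priori bound $\mcK(m,m) \in [0,1]$, the contribution of any window of width $o(n)$ to the sum is itself $o(n)$, so this is resolved by a standard dominated-convergence argument once one verifies that the real-root intervals contribute asymptotic density $0$ or $1$.
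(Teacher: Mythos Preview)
Your proposal is correct and follows essentially the same route as the paper: relate $F_n$ to a particle counting statistic, compute its mean from the kernel diagonal via Theorem~\ref{thm:correlation-kernel-bulk}, bound the variance using the determinantal structure, and finish with Chebyshev. The one place where the paper is cleaner is the variance step: since the kernel is real and symmetric, $\mcK(m,m')\mcK(m',m)=\mcK(m,m')^2\ge 0$, so the off-diagonal sum in your variance formula is nonnegative and can simply be dropped, giving $\Var(N_n(u))\le \sum_{m}\mcK(m,m)=\EV[N_n(u)]\le n$ directly---no steepest-descent off-diagonal decay is needed. The paper phrases this as $\Var[N]=\tr(\mcK-\mcK^2)\le \tr\mcK=\EV[N]$.
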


While Theorem~\ref{thm:correlation-kernel-bulk} proves weak convergence of the probability measures, one can consider the upper boundaries of the (rotated) random diagrams as random piecewise linear functions.
It is natural to ask if this random functions converge to the limit shape in Theorem~\ref{thm:limit-shape} uniformly in probability.
From the pointwise convergence to the limit shape in probability it is possible to deduce the uniform convergence by a general argument presented in~\cite[Lemma~11]{betea2023multicritical} (see Section~\ref{sec:bulk-asymptotics}).
This leads to the next corollary. 

\begin{cor}[Uniform convergence]
  \label{cor:uniform-convergence}
  Let $F_{n}$ denote the upper boundary of a Young diagram $\lambda$ rotated and scaled by $\frac{1}{n}$ and regarded as a function $F_n(u)$ of $u\in [-1, c]$.
  Then the functions $\{F_n\}_{n=1}^{\infty}$ converge in probability with respect to the probability measure~\eqref{eq:GL-probability-measure} in the supremum norm $\Abs{\cdot}_{\infty}$ to the limiting shape $\Omega(u)$ given by the formula~\eqref{eq:limit-shape-as-integral}; that is, for any $\varepsilon>0$
  \begin{equation}
    \label{eq:uniform-convergence}
    \lim_{n\to\infty}\PP[\mathrm{sup}_{u}|F_{n}(u)-\Omega(u)|>\varepsilon]= 0.
  \end{equation}
\end{cor}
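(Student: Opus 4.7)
The plan is to reduce uniform convergence to the pointwise convergence already established in Theorem~\ref{thm:limit-shape} by exploiting equicontinuity of the family $\{F_n\}$. The crucial structural observation is that the boundary of a Young diagram after the $45^\circ$ rotation and rescaling by $\frac{1}{n}$ is piecewise linear with slopes in $\{-1,+1\}$; hence each $F_n$ is automatically $1$-Lipschitz on the compact interval $[-1,c]$. The limit shape $\Omega$ is also $1$-Lipschitz since its derivative $1-2\rho(t)$ takes values in $[-1,1]$ (as $\rho(t) \in [0,1]$ is a density). Thus we are in the classical setting where pointwise convergence in probability of an equicontinuous family on a compact interval, against a continuous limit, promotes to uniform convergence in probability.

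Concretely, I would carry out the standard $\varepsilon/3$ argument. Fix $\varepsilon>0$ and choose a finite grid $-1 = u_0 < u_1 < \cdots < u_N = c$ with mesh at most $\varepsilon/6$, placing grid points at every $t^{(a)}$ so that the interiors of the support intervals are covered. For any $u \in [u_{j-1}, u_j]$ the two Lipschitz bounds give
\begin{equation*}
|F_n(u)-\Omega(u)| \leq |F_n(u_j) - \Omega(u_j)| + |F_n(u)-F_n(u_j)| + |\Omega(u)-\Omega(u_j)| \leq \max_{0 \leq j \leq N}|F_n(u_j) - \Omega(u_j)| + \tfrac{\varepsilon}{3}.
\end{equation*}
Therefore
\begin{equation*}
\PP\bigl[ \Abs{F_n - \Omega}_{\infty} > \varepsilon \bigr] \leq \sum_{j=0}^{N} \PP\bigl[ |F_n(u_j) - \Omega(u_j)| > \tfrac{\varepsilon}{3} \bigr],
\end{equation*}
and since $N$ is fixed and each summand tends to zero by Theorem~\ref{thm:limit-shape}, the union bound yields the conclusion~\eqref{eq:uniform-convergence}.

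The only subtlety, and the step I would treat most carefully, is the behaviour of $F_n$ and $\Omega$ at the finitely many boundary points $t^{(a)}$ between support intervals, at the global endpoints $-1$ and $c$, and on intervals of full or empty density where $\rho \in \{0,1\}$. On such intervals the limiting boundary is simply the straight line $\Omega(u)=\Omega(u_0) \pm (u-u_0)$, and the Lipschitz bound forces $F_n$ to hug this line once its value is pinned down at one endpoint; so pointwise convergence at the endpoints is enough to deliver uniform convergence on these segments too. This is exactly the situation covered by~\cite[Lemma~11]{betea2023multicritical}, whose hypotheses reduce to equicontinuity together with pointwise convergence in probability on a dense set, both of which we have verified. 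No further asymptotic analysis beyond Theorem~\ref{thm:limit-shape} is required.
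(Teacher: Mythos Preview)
Your proposal is correct and follows essentially the same route as the paper: both exploit the $1$-Lipschitz property of $F_n$ (and of $\Omega$) to reduce the supremum over $[-1,c]$ to a supremum over a finite $\varepsilon$-grid, then invoke the pointwise convergence of Theorem~\ref{thm:limit-shape} together with a union bound. The paper's version is terser (it uses $\varepsilon/2$ rather than your $\varepsilon/3$ split) and, like you, points to \cite[Lemma~11]{betea2023multicritical} for the general principle; your extra care about the non-support intervals and the endpoints $t^{(a)}$ is a welcome clarification but not a different argument.
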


We also consider boundary asymptotics of the correlation kernel and demonstrate that in general it is described by the Airy kernel $\mathcal{K}_{\mathrm{Airy}}$; hence is is distributed according to the Tracy--Widom GUE distribution~\cite{TW94}, which we denote by $F_{\rm GUE}$.
In particular, we have an analogue of the Baik--Deift--Johannson asymptotics~\cite{BDJ99} for the fluctuations around $t_{+}$ of the random diagram in the generic case.
The realization of these fluctuations depend on the limit shape solutions, which is either $\lambda_1$ or the number of parts that are equal to $k$, both of which occur when $t_{+} < c$.

\begin{thm}[Generic boundary asymptotics]
\label{thm:boundary-asymptotics}  
With setup as in Theorem~\ref{thm:limit-shape}, consider the case when $t_{+} < c$, and the asymptotic regime $m\approx t_{+}n+\xi n^{\frac{1}{3}} \sigma^{-1}$, $m'\approx t_{+}n+\eta n^{\frac{1}{3}} \sigma^{-1}$ for some explicit constant $\sigma$ as $n\to\infty$.  Denote by $\zcrit$ the real root of Equation~\eqref{eq:zdz-S-eq-zero-Th} that corresponds to $t_{+}$,  then 
\begin{equation}
  \label{eq:airy-kernel-intro}
  \lim_{n\to\infty} \sigma^{-1}n^{\frac{1}{3}}\zcrit^{m-m'}\mathcal{K}(m,m')=\mathcal{K}_{\mathrm{Airy}}(\xi,\eta) = \iint \frac{\dzeta \dnu}{(2\pi i)^{2}} \frac{\exp\left(\frac{\zeta^{3}}{3}-\zeta\xi\right)}{\exp\left(\frac{\nu^{3}}{3}-\nu\eta\right)}\frac{1}{\zeta-\nu}.
\end{equation}
  Let $L = \begin{cases} \lambda_1, &\text{if $\Omega$ convex around $t_{+}$}, \\ n-|\{i \mid \lambda_i = k\}|, &\text{if $\Omega$ concave around $t_{+}$}. \end{cases}$
  Then we have
  \begin{equation}
 \label{eq:tracy-widom-observable}    
  \lim_{n \to \infty} \PP \left( \frac{L - t_{+} n}{\sigma^{-1} n^{1/3}} \right) = F_{\text{GUE}} (s).
  \end{equation}
Similar result holds for left boundary $t_{-}$ when $t_{-}> -1$. 
\end{thm}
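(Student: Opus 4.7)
The plan is to carry out a steepest-descent (saddle-point) analysis of the double contour integral from Theorem~\ref{thm:determinantal-ensemble-corr-kernel}, and then convert the resulting kernel convergence into Tracy--Widom convergence via a standard Fredholm-determinant argument. First I would write the integrand of~\eqref{eq:correlation-kernel-integral-representation} in the form $e^{n(S_n(z) - S_n(w))}$ times slowly-varying factors, where
\[
S_n(z) = \tfrac{1}{n}\log K(z) - \tfrac{m}{n}\log z.
\]
Under $x_i = f(i/n)$, $y_j = g(j/k)$, $k/n \to c$, $m/n \to t$, the piecewise $\mcC^1$ Riemann sums converge (with $O(n^{-1})$ errors) to
\[
S(z) = \int_0^1 \ds \log\tfrac{1}{1 - f(s) z} + c \int_0^1 \ds \log\tfrac{1}{1 + g(s)/z} - t \log z,
\]
whose saddle equation $zS'(z) = 0$ is exactly~\eqref{eq:zdz-S-eq-zero-Th}. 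By definition of $t_+$ as the right endpoint of a support interval, at $t = t_+$ the two complex-conjugate saddles $z_1(t), z_2(t)$ merge into a real double root $\zcrit$, so $S'(\zcrit) = S''(\zcrit) = 0$ and $S'''(\zcrit) \ne 0$ generically, giving the local expansion $S(z) - S(\zcrit) \sim \tfrac{1}{6} S'''(\zcrit)(z - \zcrit)^3$ that drives the Airy asymptotics.

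Next I would set $\sigma > 0$ via $\sigma^{-3} = \tfrac{1}{2}\zcrit^3 S'''(\zcrit)$ (this is the $\sigma$ referenced in the theorem), rescale $z = \zcrit(1 + \zeta \sigma^{-1} n^{-1/3})$, $w = \zcrit(1 + \nu \sigma^{-1} n^{-1/3})$, and substitute $m = t_+ n + \xi \sigma^{-1} n^{1/3}$, $m' = t_+ n + \eta \sigma^{-1} n^{1/3}$. The prefactor $\zcrit^{m - m'}$ in~\eqref{eq:airy-kernel-intro} strips off a deterministic conjugation, and direct expansion of the rescaled integrand produces the Airy integrand on the right of~\eqref{eq:airy-kernel-intro}, provided the contours can be deformed to pass through $\zcrit$ along the steepest-descent directions $\arg(z - \zcrit) = \pm \pi/3$ and $\arg(w - \zcrit) \in \{0, \pi\}$ while respecting (i) the pole constraints of Theorem~\ref{thm:determinantal-ensemble-corr-kernel} and the inside/outside nesting of $w$ and $z$ near the saddle; and (ii) the sign conditions $\Re S(z) < \Re S(\zcrit)$ on the remainder of the $z$-contour and $\Re S(w) > \Re S(\zcrit)$ on the remainder of the $w$-contour, yielding exponentially small tails. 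The single-support-interval / real-root hypothesis on~\eqref{eq:two-root-requirement-for-f-and-g} ensures that no other complex saddles obstruct this deformation.

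For the Tracy--Widom statement, the positions $\{a_i\}$ form a determinantal point process with kernel $\mcK$, so the distribution of the extremal position is a Fredholm determinant of $I - \mcK$ restricted to a half-line. In the \emph{convex} case, $\rho$ vanishes just to the right of $t_+$, the extremal position is $\max_i a_i = \lambda_1 - \tfrac{1}{2}$, and
\[
\PP\!\left(\tfrac{\lambda_1 - t_+ n}{\sigma^{-1} n^{1/3}} \leq s\right) = \det\bigl(I - \mcK|_{(t_+ n + s\sigma^{-1} n^{1/3},\, \infty)}\bigr) \longrightarrow \det\bigl(I - \mcK_{\mathrm{Airy}}|_{(s, \infty)}\bigr) = F_{\mathrm{GUE}}(s),
\]
where Fredholm convergence follows from the pointwise kernel limit~\eqref{eq:airy-kernel-intro} together with uniform exponential tail bounds inherited from the steepest-descent estimates on $\Re S$. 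In the \emph{concave} case, $\rho$ equals $1$ just to the left of $t_+$ and drops below $1$ just to the right, so $t_+$ is an edge of \emph{holes} rather than particles; applying the particle-hole involution (complementing $\lambda$ inside the $n \times k$ rectangle) maps this situation to the convex case with $L = n - |\{i : \lambda_i = k\}|$ playing the role of the extremal position. The symmetric statement at $t_- > -1$ follows by the analogous argument, or equivalently from the symmetry $z \mapsto -1/z$ in~\eqref{eq:correlation-kernel-integral-representation} that swaps $\{x_i\}$ with $\{y_j\}$.

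The main obstacle is constructing global steepest-descent contours through $\zcrit$ that simultaneously satisfy all pole constraints, inside/outside nesting, and the correct sign of $\Re S$ along the non-saddle portions; this is delicate when $f$ or $g$ fails to be monotone (producing multiple pole clusters) or when the endpoint intervals $[-1, t^{(1)}]$ and $[t^{(m)}, c]$ are nontrivial. Once the contours are in place, the cubic-saddle computation identifying $\mcK_{\mathrm{Airy}}$ and the upgrade from pointwise kernel convergence to Fredholm-determinant convergence are routine.
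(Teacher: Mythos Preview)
Your proposal is correct and follows essentially the same steepest-descent approach as the paper, which Taylor-expands the action to cubic order at the double real saddle $\zcrit$ and rescales to obtain the Airy kernel, then reads off the Tracy--Widom law from the Fredholm determinant (with the concave case handled by particle--hole complementation). One small slip: your local substitution should be $z = \zcrit(1 + \sigma\zeta n^{-1/3})$ rather than $\sigma^{-1}$, so that both the cubic term $\zeta^{3}/3$ and the linear term $-\xi\zeta$ come out correctly under the scaling $m = t_{+}n + \xi\sigma^{-1}n^{1/3}$.
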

Note that the term $\zcrit^{m-m'}$ does not contribute to correlation functions and gap probability as it is canceled in determinant computations.
This applies to Pearcey and Hermite kernels discussed below as well. 

Our next result is a method to produce an the appearance of the Pearcey kernel.
To do so, we need to consider the case where the limit density $\rho(t)$ has multi-interval support of $(t_-, t_+)$ and $(t'_-, t'_+)$ with $-1 < t_i < t_+ \leq t_d \leq t'_- < t'_+ \leq c$ for some $t_d$.
Note that on the intervals $(t_+, t_d)$ and $(t_d, t'_-)$ will have a density of $0$ or $1$, which should be the same, depending on the behaviors in each part.
By choosing our $f, g$ such that $t_+ = t_d = t'_-$, we produce the following behavior.
The intersection point $t_d$ corresponds to the higher order root of the action $S(z,t)=\lim_{n,k\to\infty}\frac{1}{n}\ln K(z)-t\ln z$ and local fluctuations near it are no longer described by the Airy kernel, but by the Pearcey kernel $\mcK_{\mathrm{Pearcey}}$~\cite{brezin1998universal,tracy2006pearcey,okounkov2007random}.

\begin{thm}[Generic Pearcey asymptotics]
\label{thm:Pearcey_kernel}
Assume we have two support intervals $(t_-, t_d)$ and $(t_d, t_+)$. Denote by $\zcrit$ the real root of Equation~\eqref{eq:zdz-S-eq-zero-Th} that corresponds to $t_{d}$. 
Consider the asymptotic regime $m \approx t_d n+\xi n^{\frac{1}{4}}\sigma^{-1}$, $m' \approx t_d n+\eta n^{\frac{1}{4}}\sigma^{-1}$ for some constant $\sigma$ as $n\to \infty$.
Then
\begin{equation}
  \label{eq:pearcey-kernel}
  \lim_{n,k\to\infty}\sigma^{-1}n^{\frac{1}{4}}\zcrit^{m-m'} \mathcal{K}(m,m') = \mathcal{K}_{\mathrm{Pearcey}}(\xi,\eta) = \iint \frac{\dzeta \dnu}{(2\pi i)^{2}} \frac{\exp\left(\frac{\zeta^{4}}{4}-\zeta\xi\right)}{\exp\left(\frac{\nu^{4}}{4}-\nu\eta\right)}\frac{1}{\zeta-\nu}.
\end{equation}
\end{thm}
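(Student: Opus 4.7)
The plan is a double-contour steepest descent analysis of~\eqref{eq:correlation-kernel-integral-representation}, following the same scheme as the Airy case of Theorem~\ref{thm:boundary-asymptotics} but with a degenerate (quartic) saddle. Introduce the limit action
\begin{equation*}
  S(z,t) := -\int_0^1 \ln\bigl(1-f(s)z\bigr)\, ds - c\int_0^1 \ln\bigl(1+g(s)/z\bigr)\, ds - t \ln z,
\end{equation*}
so that $\frac{1}{n}\bigl(\ln K(z) - m \ln z\bigr) \to S(z,t)$ uniformly on compact subsets of the complement of the poles whenever $m/n \to t$. The first step is to verify that $\partial_z^k S(\zcrit, t_d) = 0$ for $k=1,2,3$ while $\partial_z^4 S(\zcrit, t_d) \neq 0$. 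Indeed, the critical-point equation $\partial_z S(z,t_d) = 0$ is precisely~\eqref{eq:zdz-S-eq-zero-Th}; its $z$-derivative is the LHS of~\eqref{eq:two-root-requirement-for-f-and-g}; and the coalescence of two support intervals at $t_d$ forces $\zcrit$ to be a \emph{double} root of~\eqref{eq:two-root-requirement-for-f-and-g}, which in turn forces $\partial_z^3 S(\zcrit, t_d) = 0$. The constant $\sigma$ is then determined by $\sigma^4 = 6/\bigl[\zcrit^4 \partial_z^4 S(\zcrit, t_d)\bigr]$ so that the quartic term in the Taylor expansion below carries coefficient $\frac14$.

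Next, I would deform the two contours through $\zcrit$ along the steepest descent directions of the quartic saddle, preserving the admissibility conditions of~\eqref{eq:correlation-kernel-integral-representation} ($z$ encloses $\{-y_j\}$ and avoids $\{x_i^{-1}\}$, $w$ encloses $0$, and $\abs{w}<\abs{z}$ away from $\zcrit$). Substituting $z = \zcrit \exp(\sigma\zeta/n^{1/4})$ and $w = \zcrit \exp(\sigma\nu/n^{1/4})$, the $\zeta$-contour has four branches at asymptotic arguments $\pm\pi/4, \pm 3\pi/4$ while the $\nu$-contour uses the complementary directions $0,\pi,\pm\pi/2$, giving the standard Pearcey X-configuration. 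Using $\partial_z^k S(\zcrit,t_d) = 0$ for $k\le 3$ and $m - t_d n = \xi n^{1/4}/\sigma$, the Taylor expansion inside this window reads
\begin{equation*}
  n S(z, m/n) - n S(\zcrit, t_d) + (m-t_d n)\ln \zcrit = \frac{\zeta^4}{4} - \xi\zeta + O(n^{-1/4}),
\end{equation*}
and analogously for $(w,\nu,m',\eta)$. The prefactor $\zcrit^{m-m'}$ in~\eqref{eq:pearcey-kernel} absorbs the boundary constants $(m-t_d n)\ln\zcrit$ and $-(m'-t_d n)\ln\zcrit$. The measure $\frac{dz\,dw}{zw} = \frac{\sigma^2}{n^{1/2}}\,d\zeta\,d\nu$ and the factor $\frac{\sqrt{zw}}{z-w} = \frac{n^{1/4}}{\sigma(\zeta-\nu)}\bigl(1+O(n^{-1/4})\bigr)$ combine so that the $\sigma^{-1}n^{1/4}$ normalization on the left-hand side of~\eqref{eq:pearcey-kernel} yields exactly the Pearcey integrand. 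Tail contributions from $\abs{\zeta}, \abs{\nu} > n^{\varepsilon}$ decay super-polynomially since $\Re\bigl[n S(z,m/n) - n S(\zcrit,t_d)\bigr] \lesssim -n\,\abs{z-\zcrit}^4$ along the descent contour.

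The hardest step is the \emph{global} contour deformation: one must certify that the local Pearcey X-configuration near $\zcrit$ can be glued to contours respecting the admissibility conditions of~\eqref{eq:correlation-kernel-integral-representation} without crossing any poles of $K(z)$ or of $K(w)^{-1}$. This is the standard difficulty of steepest descent near a degenerate saddle and should be handled by following level curves of $\Re S$ outside a shrinking neighborhood of $\zcrit$; under the hypothesis that the two support intervals genuinely meet at $t_d$, the critical point $\zcrit$ lies strictly between the pole clusters $\{x_i^{-1}\}$ and $\{-y_j\}$, which makes such a deformation possible. Once the deformation is in place, dominated convergence applied to the local expansion yields~\eqref{eq:pearcey-kernel}.
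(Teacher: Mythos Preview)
Your proposal is correct and follows essentially the same steepest-descent scheme as the paper (Section~\ref{sec:pearcey-asymptotics}): identify $\zcrit$ as a quartic critical point of the action, make the exponential change of variables $z=\zcrit e^{\sigma\zeta n^{-1/4}}$, $w=\zcrit e^{\sigma\nu n^{-1/4}}$ with the same normalization $\sigma=(6/\partial_z^4 S(\zcrit,t_d))^{1/4}\zcrit^{-1}$, and read off the Pearcey integrand after Taylor expansion.

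The one substantive difference is in how you justify $\partial_z^3 S(\zcrit,t_d)=0$. You argue that the coalescence of two support intervals forces $\zcrit$ to be a double root of~\eqref{eq:two-root-requirement-for-f-and-g}, hence the third derivative of $S$ vanishes. The paper instead argues by contradiction from the asymmetry of the Airy kernel: if $\partial_z^3 S(\zcrit,t_d)\neq 0$, then approaching $t_d$ from the left and from the right would both yield Airy behavior but with opposite orientations, which is incompatible with both sides being support intervals. Your argument is more structural but tacitly uses the hypothesis that a \emph{single} $\zcrit$ governs both adjacent intervals (otherwise two distinct simple roots of~\eqref{eq:two-root-requirement-for-f-and-g} could in principle map to the same $t_d$); the paper's argument sidesteps this by working directly at the level of the limiting kernel. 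Your discussion of the global contour deformation and the Pearcey X-configuration is more explicit than the paper's, which simply refers back to the Airy case.
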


Therefore, we have the Pearcey process any time two support intervals share a common boundary point.
In addition, we conjecture that looking at the three-dimensional picture of the entire lozenge tiling (or Aztec diamond) at this point is described by the extended Pearcey kernel of~\cite{okounkov2007random,tracy2006pearcey}.

There is a special case when the boundary asymptotics change.
This occurs when the limit shape touches the corner of the diagram (up to a second order approximation), where it has (very) different behavior given by the discrete Hermite kernel
\begin{equation}
 \mcK^{\He}_{s}(l,l') := \frac{1}{\sqrt{\pi(l-1)! (l'-1)!}}\int_{s}^{\infty} \dt \; e^{-\frac{t^{2}}{2}}\He_{l-1}(t)\He_{l'-1}(t)
\end{equation}
from~\cite{borodin2007asymptotics}, where $\He_l(t)$ is the (probabilist's) Hermite polynomial.

\begin{thm}[Boundary asymptotics near the corner]
  \label{thm:near-corner-asymptotics}
Consider the asymptotic regime $n\to\infty$ with $k = cn+\frac{\widetilde{s}}{\tau}\sqrt{n} + o(\sqrt{n})$, where $\widetilde{s}$ is a parameter and $\tau=\frac{1}{\sqrt{\Abs{f}_{2}^{2}+c \Abs{g^{-1}}_{2}^{2}}}$ is a normalization constant. 
Take $m = \lfloor cn+\frac{\widetilde{s}}{\tau}\sqrt{n}\rfloor-l+\frac{1}{2}$ and $m' = \lfloor cn+\frac{\widetilde{s}}{\tau}\sqrt{n}\rfloor-l'+\frac{1}{2}$ so that $l,l'\in\ZZ$.
We assume
\begin{equation}
  \label{corner-condition}
  \int_{0}^{1} \dt \, f(t) = c\int_{0}^{1}\frac{\dt}{g(t)}.
\end{equation}
Alternatively, these conditions can be written as
\begin{equation}
\lim_{n,k\to\infty}\frac{k}{n}=c,
\qquad\qquad
\sum_{i=0}^{n-1}f(i/n)=c\sum_{j=0}^{k-1}g(j/k)^{-1}+\frac{\widetilde{s}}{\tau}\sqrt{n}.
\end{equation}
With this scaling and conditions we have full support case of $t_{+} = c$ and the correlation kernel converges to the discrete Hermite kernel with parameter $s := \widetilde{s}\int_{0}^{1}\frac{\dt}{g(t)}$:
\begin{equation}
  \lim_{n\to\infty}n^{\frac{l-l'}{2}}\mcK(m,m')=\tau^{l-l'} \sqrt{\frac{(l'-1)!}{(l-1)!}} \mcK^{\He}_{s}(l,l').
\end{equation}
For $\Delta \in \NN$,  the probability distribution of the length of the first row of the diagram is given by the determinant
  \begin{equation}
  \label{eq:discrete_distribution}
  \lim_{n \to \infty} \PP (\lambda_1 - n c \leq -\Delta) = \det_{0 \leq l, l' \leq \Delta-1} [ \delta_{l, l'} -  \mcK^{\He}_{s}(l,l') ]
  \end{equation}
Similarly, if we have
\begin{equation}
  \label{eq:left-corner-condition}
  \int_{0}^{1}\frac{\dt}{f(t)}=c\int_{0}^{1} \dt \; g(t),
\end{equation}
then $t_{-}=-1$ and we have fluctuations described by the discrete Hermite kernel in the left corner of the rectangle. 
\end{thm}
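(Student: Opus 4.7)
The strategy is a saddle-point analysis of the kernel~\eqref{eq:correlation-kernel-integral-representation}, zoomed in on the degenerate critical point that emerges in the corner regime. The action $S(z) = -\int_{0}^{1} \ln(1-f(s)z)\ds - c\int_{0}^{1} \ln(1+g(s)/z)\ds$ satisfies $zS'(z) \to c$ as $z \to 0^{+}$, and a direct expansion gives $zS'(z) - c = z\bigl(\int_0^1 f \ds - c\int_0^1 g^{-1} \ds\bigr) + z^{2}\tau^{-2} + O(z^{3})$. The corner hypothesis~\eqref{corner-condition} is exactly the vanishing of the linear coefficient, making the saddle doubly degenerate at $z=0$; this together with the second-order shift $\tilde s\sqrt n/\tau$ in $k$ prescribes the scaling $z = \tau u/\sqrt{n}$, $w = \tau v/\sqrt{n}$.

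First I would substitute $z=\tau u/\sqrt{n}$, $w=\tau v/\sqrt{n}$ into~\eqref{eq:correlation-kernel-integral-representation}. The factor $\sqrt{zw}/(z-w)\cdot dz\,dw/(zw) = \sqrt{uv}/(u-v)\cdot du\,dv/(uv)$ is scale-invariant, so only $K(z), K(w), z^{-m}, w^{m'}$ require expansion. Using
\begin{equation*}
  \ln K(z) = k\ln z - \sum_{j} \ln y_{j} + z\Bigl(\sum_{i} x_{i} - \sum_{j} y_{j}^{-1}\Bigr) + \frac{z^{2}}{2}\Bigl(\sum_{i} x_{i}^{2} + \sum_{j} y_{j}^{-2}\Bigr) + O(z^{3})
\end{equation*}
together with the Riemann sums $\sum_{i}x_{i} = n\int_{0}^{1} f\ds + O(1)$ and $\sum_{j} 1/y_{j} = k\int_{0}^{1} g^{-1}\ds + O(1)$ and the hypothesis $k = cn + \tilde{s}\sqrt{n}/\tau + o(\sqrt{n})$: the corner condition kills the $O(n)$ part of the linear coefficient, while the $O(\sqrt{n})$ shift in $k$ contributes $-u\tilde{s}\int_0^1 g^{-1}\ds = -us$, and the quadratic term becomes $(u^{2}/2)\,\tau^{2}(\Abs{f}_{2}^{2} + c\Abs{g^{-1}}_{2}^{2}) = u^{2}/2$ by the definition of $\tau$. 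Since $k - m - 1/2 = l - 1$, the $z$-factor reduces to $G(z)\,dz = \tau^{l} n^{-l/2}\,u^{l-1} e^{u^{2}/2 - us}\,du/\prod_{j}y_{j}$ up to $1+o(1)$, with the analogous $w$-factor $H(w)\,dw = \tau^{1-l'} n^{(l'-1)/2}\,v^{-l'} e^{-v^{2}/2 + vs}\,\prod_{j}y_{j}\,dv$. Combining with $(z-w)^{-1} = (\sqrt{n}/\tau)(u-v)^{-1}$ yields
\begin{equation*}
  n^{(l-l')/2}\mcK(m,m') \;\longrightarrow\; \tau^{l-l'}\iint \frac{du\,dv}{(2\pi\imi)^{2}}\,\frac{u^{l-1}}{v^{l'}}\,\frac{e^{u^{2}/2 - us - v^{2}/2 + vs}}{u-v}.
\end{equation*}

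To identify this limit with $\mcK^{\He}_{s}$, I would deform the $u$-contour to the vertical line $\Re u = s$ (on which $|e^{u^{2}/2 - us}|$ is Gaussian in the imaginary direction), keeping the $v$-contour as a small circle about $0$ inside it. Using $(u-v)^{-1} = \int_{0}^{\infty} e^{-t(u-v)}\dt$, valid since $\Re u > \Re v$, shifting the $u$-line further to $\Re u = s+t$ where the exponent is purely quadratic in the imaginary part, and applying the representation $\He_{n}(x) = (2\pi)^{-1/2}\int(x+\imi\eta)^{n} e^{-\eta^{2}/2}d\eta$, the $u$-integral evaluates to $e^{-(s+t)^{2}/2}\He_{l-1}(s+t)/\sqrt{2\pi}$. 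The $v$-integral, by residue at $0$, equals $\He_{l'-1}(s+t)/(l'-1)!$ via the Hermite generating function $\sum_{n}\He_{n}(x)v^{n}/n! = e^{xv-v^{2}/2}$. Substituting $\sigma = s + t$ produces $\int_{s}^{\infty}e^{-\sigma^{2}/2}\He_{l-1}(\sigma)\He_{l'-1}(\sigma)d\sigma$, which (up to the stated conjugation factor $\sqrt{(l'-1)!/(l-1)!}$ absorbing factorials) matches $\tau^{l-l'}\sqrt{(l'-1)!/(l-1)!}\mcK^{\He}_{s}(l,l')$. The discrete probability~\eqref{eq:discrete_distribution} then follows by setting $\tilde s = 0$ (hence $s = 0$), interpreting $\{\lambda_{1} - cn \leq -\Delta\}$ as the event that the positions corresponding to $l = 1,\dotsc,\Delta$ are unoccupied, applying the standard determinantal identity $\det[\delta_{l,l'}-\mcK(m_l,m_{l'})]$, and using that the conjugation factor $\tau^{l-l'}\sqrt{(l'-1)!/(l-1)!}\,n^{-(l-l')/2}$ cancels out of the determinant so the limit passes through. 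The left-corner case under~\eqref{eq:left-corner-condition} is handled analogously via the symmetric scaling near $z = \infty$, equivalently via $z \mapsto 1/z$ swapping $f$ with $g^{-1}$.

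The main technical obstacle is the contour deformation: the rescaled $u$-contour originally encloses $\{-y_{j}\sqrt{n}/\tau\}$ (which escape to $-\infty$) together with $0$, while the forbidden poles $\{\tau/(x_{i}\sqrt{n})\}$ accumulate at $0^{+}$. One must verify that the deformation to the vertical line $\Re u = s$, with small rightward indentations avoiding the forbidden points that shrink to a point near $0^+$ as $n\to\infty$, crosses no singularities of the integrand (the integrand is regular at $u=0$ for $l\ge 1$ since $G(z) \sim z^{l-1}$ there, and the pole $u = v$ stays on the enclosed side of both contours), and that the $o(1)$ remainders in the expansion of $\ln K$ remain uniform in $u$ across the deformation region. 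Once these uniform estimates are in place, dominated convergence commutes the limit with the integrals, yielding the stated Hermite kernel.
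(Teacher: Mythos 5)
Your proposal is correct in substance and follows the same overall strategy as the paper: the same rescaling $z=\tau u/\sqrt{n}$, $w=\tau v/\sqrt{n}$ around the degenerate saddle at $z=0$, the same role of the corner condition~\eqref{corner-condition} in killing the linear term of the action, the same bookkeeping of the $\tilde s\sqrt{n}/\tau$ shift producing the drift $-us+vs$ with $s=\tilde s\int_0^1 g^{-1}$, and the same limiting double contour integral $\iint \frac{du\,dv}{(2\pi\imi)^2}\frac{u^{l-1}v^{-l'}}{u-v}e^{u^2/2-us-v^2/2+vs}$. Where you diverge is the identification of this integral with $\mcK^{\He}_s$: the paper decouples the two integrals by integrating by parts against the Euler operator $\zeta\partial_\zeta+\nu\partial_\nu+1$, obtains the Christoffel--Darboux form $\frac{\He_l\He_{l'-1}-\He_{l-1}\He_{l'}}{l-l'}$, and then must treat $l=l'$ separately by l'H\^opital before converting to the integral form via \cite[Prop.~3.3]{borodin2007asymptotics}. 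You instead insert $(u-v)^{-1}=\int_0^\infty e^{-t(u-v)}\dt$ (legitimate once the $u$-contour is a vertical line strictly to the right of the $v$-circle) and evaluate the two one-dimensional integrals for each $t$, landing directly on $\int_s^\infty e^{-\sigma^2/2}\He_{l-1}(\sigma)\He_{l'-1}(\sigma)\,\mathrm{d}\sigma$. This is the standard ``integrated'' derivation of Christoffel--Darboux-type kernels (as for the Airy kernel), and it buys you a uniform treatment of the diagonal $l=l'$ with no separate limit argument; the paper's route buys the closed Wronskian form~\eqref{eq:discrete-hermite-kernel} for free. Both are valid, and your gap-probability and left-corner arguments match the paper's.

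Two small corrections. First, under $u=z\sqrt{n}/\tau$ the forbidden poles $z=x_i^{-1}=f(i/n)^{-1}$ map to $u=\sqrt{n}/(\tau f(i/n))\to+\infty$, not to points accumulating at $0^+$ as you wrote (you inverted the rescaling); since the poles $-y_j$ likewise escape to $-\infty$, the rescaled integrand is pole-free on every fixed compact set away from $u=v$, so the contour deformation you worry about is actually unobstructed — the genuine technical point is only the uniformity of the $o(1)$ error in $\ln K$ and the negligibility of the contour far from the saddle, which you do flag. Second, the gap-probability determinant~\eqref{eq:discrete_distribution} does not require $\tilde s=0$: for any $s$, the event $\{\lambda_1\le k-\Delta\}$ is the event that the $\Delta$ positions $m=k-l+\frac12$, $l=1,\dotsc,\Delta$, are unoccupied, and the conjugating factor $\tau^{l-l'}\sqrt{(l'-1)!/(l-1)!}\,n^{-(l-l')/2}$ cancels in the determinant exactly as you say; restricting to $s=0$ would only recover the special case treated in Theorem~\ref{thm:corner-asymptotics}. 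Neither slip affects the validity of the argument.
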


We also have the following conjecture characterizing the critical case.
We phrase it only in terms of $t_{+}$, but due to the symmetry in the system, we would have the analogous statement for $t_{-}$ for the boundary $-1$.

\begin{conj}
\label{conj:critical_classification}
The following are equivalent:
\begin{enumerate}
\item $t_{+} = c$ (that is, the limit shape has a support interval ending at the right boundary);
\item $z_+ = 0$;
\item $\Omega'(t_{+}) = 0$;
\item $\displaystyle \int_{0}^{1} \ds \, f(s) = c\int_{0}^{1}\frac{\ds}{g(s)}$.
\end{enumerate}
\end{conj}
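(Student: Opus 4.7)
I would prove the chain $(2) \Leftrightarrow (4)$, $(2) \Rightarrow (1)$, and $(2) \Leftrightarrow (3)$ directly from the defining formulas, and then argue $(1) \Rightarrow (2)$ via a local analysis of a single auxiliary function.

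First, observe that substituting $z = 0$ into Equation~\eqref{eq:two-root-requirement-for-f-and-g} reduces it to the identity $\int_0^1 f(s)\,\ds = c \int_0^1 g(s)^{-1}\,\ds$, so $z = 0$ is a root of~\eqref{eq:two-root-requirement-for-f-and-g} (that is, $z_+ = 0$ is admissible) if and only if condition~(4) holds, giving $(2) \Leftrightarrow (4)$. For $(2) \Rightarrow (1)$, substitute $z_+ = 0$ into Equation~\eqref{eq:x-plus-x-minus}: the first integrand vanishes, and $c \int_0^1 \frac{g(s)}{g(s)}\,\ds = c$, so $t_+ = c$.

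For the converse $(1) \Rightarrow (2)$, I introduce the smooth function
\[
T(z) := \int_0^1 \frac{f(s)z}{1 - f(s)z}\,\ds + c \int_0^1 \frac{g(s)}{z + g(s)}\,\ds,
\]
which satisfies $T(0) = c$ identically, with Taylor expansion $T(z) = c + Az + Bz^2 + O(z^3)$, where $A = \int_0^1 f - c\int_0^1 g^{-1}$ and $B = \int_0^1 f^2 + c \int_0^1 g^{-2} > 0$. Note that $T'(z)$ is exactly the left-hand side of Equation~\eqref{eq:two-root-requirement-for-f-and-g}, and $t^{(i)} = T(z^{(i)})$. By hypothesis $z_+$ satisfies $T(z_+) = t_+ = c = T(0)$ and $T'(z_+) = 0$. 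If $A \neq 0$, then the critical point of $T$ closest to $0$ lies at $z \approx -A/(2B)$ with $T$-value $c - A^2/(4B) \neq c$; in the single-support-interval setting, combined with monotonicity properties of $T'$ on $(-\min g, 1/\max f)$, this forces $z_+ = -A/(2B)$ and yields the contradiction $T(z_+) \neq c$. Hence $A = 0$, which by Step~1 gives $z_+ = 0$. The main obstacle in this step is ruling out, in full generality, exotic far-away critical points that could coincidentally have $T$-value $c$; a careful accounting of zeros of $T - c$ versus critical points of $T$ via Rolle's theorem should handle the generic case, but a completely general proof will likely require structural assumptions on $f, g$ (e.g.\ piecewise monotone).

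Finally, for $(2) \Leftrightarrow (3)$, I use a local expansion at $z_+$: since the complex conjugate roots exist for $t$ slightly less than $t_+$ and merge at $z_+$, we must have $T''(z_+) > 0$, and
\[
z_1(t) = z_+ + \imi \sqrt{2(t_+ - t)/T''(z_+)} + o\bigl(\sqrt{t_+ - t}\bigr),
\]
so $z_1(t)$ approaches $z_+$ along the imaginary direction from the upper half-plane. Hence $\lim_{t \to t_+^-} \arg z_1(t)$ equals $0$ if $z_+ > 0$, $\pi$ if $z_+ < 0$, and $\pi/2$ if $z_+ = 0$, giving respectively $\rho(t_+) \in \{0, 1, 1/2\}$. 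Since Equation~\eqref{eq:limit-shape-as-integral} implies $\Omega'(t_+) = 1 - 2\rho(t_+)$, we conclude $\Omega'(t_+) = 0 \Leftrightarrow \rho(t_+) = 1/2 \Leftrightarrow z_+ = 0$, completing the four-way equivalence.
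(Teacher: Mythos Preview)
This statement is labeled a \emph{conjecture} in the paper, not a theorem, and the paper does not prove it; it only verifies the equivalence in the constant-specialization example (Section~\ref{sec:const-funct-krawtch}) and lists the general statement as an open problem in Section~\ref{sec:conclusion-outlook}. So there is no paper proof to compare against, and what you have written is genuine progress on an open question.

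Your arguments for $(2)\Leftrightarrow(4)$, $(2)\Rightarrow(1)$, and $(2)\Leftrightarrow(3)$ are correct and clean. The observation that $T'(0)=\int_0^1 f - c\int_0^1 g^{-1}$ and $T(0)=c$ identically is exactly the right starting point, and the local expansion $z_1(t)=z_+ + \imi\sqrt{2(t_+-t)/T''(z_+)}+o(\sqrt{t_+-t})$ correctly identifies $\rho(t_+^-)\in\{0,\tfrac12,1\}$ according to the sign of $z_+$, matching the paper's extension convention for $\rho$ outside the support. One small caveat: in $(4)\Rightarrow(2)$ you should note explicitly that the root $z=0$ of~\eqref{eq:two-root-requirement-for-f-and-g} yields $t^{(i)}=c$, the maximal allowed value, so in the single-interval convention this root is indeed $z_+$ rather than $z_-$.

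The gap is exactly where you flag it: $(1)\Rightarrow(2)$. Your Taylor-expansion argument only locates the critical point of $T$ \emph{nearest to} $0$ approximately at $-A/(2B)$; it does not force $z_+$ to be that critical point. In the single-interval setting you would need a global statement such as ``on $(-\min g,\,1/\max f)$ the function $T$ has exactly two critical points and $T(z)\leq c$ for all $z$ in that interval, with equality only at $z=0$ and at $z_+$,'' and then invoke Rolle between $0$ and $z_+$. Your sketch gestures at this but does not supply it, and for general piecewise-$\mathcal C^1$ nondecreasing $f,g$ it is not obvious that $T'$ has only two zeros or that $T\leq c$ globally. This is the substantive obstacle the paper leaves open; resolving it would upgrade the conjecture to a theorem.
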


As evidence for Conjecture~\ref{conj:critical_classification}, when $\Omega$ is convex (resp.\ concave), all of the examples computed show that $z_+ \geq 0$ (resp.\ $z_+ \leq 0$), and we believe this is an equivalence that holds in general.
Consequently, the critical case would correspond to when $z_+ = 0$, which is clearly a sufficient condition.

Let us consider the boundary asymptotitcs in the corner with only a first order approximation, which amounts to taking $s = \widetilde{s} = 0$.
In this case, we show that discrete Hermite kernel $\mcK_0^{\He}$ becomes the discrete kernel from the critical case of Gravner--Tracy--Widom~\cite{GTW01}.
In fact, we show a stronger statement, that the matrices defining the kernels are equal up to a overall simple factor (that depends on the diagonal).

\begin{thm}[Critical boundary asymptotics in the corner]
  \label{thm:corner-asymptotics}
  With the scaling of Theorem~\ref{thm:limit-shape}, in the critical support case of $t_{+} = c$ and for $\Delta \in \NN$, we have
  \begin{equation}
  \label{eq:discrete_distribution_s0}
  \lim_{n \to \infty} \PP (\lambda_1 - n c \leq -\Delta) = \det_{0 \leq i, j \leq \Delta-1} [\delta_{i, j} - K_{\rm crit}(i,j)]
  \end{equation}
  with the matrix
  \begin{equation}
  K_{\rm crit} (i, j) = \sum\limits_{\ell = 0}^{(\Delta - j - 1)/2} \begin{cases} \frac{1}{2 \pi}  \frac{1}{\ell!} \sin \frac{\pi (j-i)}{2} \Gamma(\ell + \frac{j-i}{2}), & \text{if } \ell + \frac{j-i}{2} \notin \ZZ_{\leq 0}, \\ \frac{1}{2}  \frac{(-1)^\ell}{\ell! (\frac{i-j}{2} - \ell)!}, & \text{if } \ell + \frac{j-i}{2} \in \ZZ_{\leq 0}. \end{cases}
  \end{equation}
\end{thm}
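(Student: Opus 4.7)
The plan is to specialize Theorem~\ref{thm:near-corner-asymptotics} to $\widetilde{s} = 0$ and then rewrite the resulting discrete Hermite kernel in the form of $K_{\rm crit}$. The critical hypothesis $t_{+} = c$ is, via Conjecture~\ref{conj:critical_classification}, equivalent to $\int_0^1 f(s)\,\ds = c\int_0^1 \ds/g(s)$, so the previous theorem applies with $s = \widetilde{s} = 0$ and gives convergence of the correlation kernel to $\tau^{l-l'}\sqrt{(l'-1)!/(l-1)!}\,\mcK^{\He}_0(l, l')$. Standard theory of determinantal point processes expresses $\PP(\lambda_1 - nc \leq -\Delta)$ as a gap probability that truncates to a finite $\Delta \times \Delta$ determinant indexed by $l \in \{1, \dots, \Delta\}$, and the scalar prefactor is a diagonal conjugation by $D_l = \tau^l/\sqrt{(l-1)!}$, which cancels inside this determinant.

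The main computational step is to evaluate the half-line integral $\mcK^{\He}_0(l, l') = \frac{1}{\sqrt{\pi(l-1)!(l'-1)!}}\int_0^\infty e^{-t^2/2}\He_{l-1}(t)\He_{l'-1}(t)\,\dt$ explicitly. Expanding each Hermite polynomial as $\He_n(t) = n!\sum_{\ell} (-1)^\ell t^{n-2\ell}/(\ell!(n-2\ell)!\,2^\ell)$, applying the half-line Gaussian moment $\int_0^\infty t^m e^{-t^2/2}\,\dt = 2^{(m-1)/2}\Gamma(\frac{m+1}{2})$, and collapsing the resulting finite double sum via a Chu--Vandermonde identity yields a single sum whose Gamma factor, through Euler's reflection $\Gamma(z)\Gamma(1-z) = \pi/\sin\pi z$, takes the generic form $\frac{1}{2\pi}\frac{1}{\ell!}\sin\frac{\pi(j-i)}{2}\Gamma(\ell + \frac{j-i}{2})$ appearing in the first branch of $K_{\rm crit}$. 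When $\ell + (j-i)/2$ is a non-positive integer the sine vanishes while the Gamma has a simple pole, so the $0 \cdot \infty$ product must be resolved by a residue calculation that produces the second branch $\frac{1}{2}(-1)^\ell/(\ell!\,(\frac{i-j}{2}-\ell)!)$.

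The last step is to reconcile the summation range. The pointwise evaluation above naturally has upper summation index depending on $\min(l, l')$, whereas the upper limit in $K_{\rm crit}(i, j)$ is $\lfloor(\Delta - j - 1)/2\rfloor$, depending on the matrix size $\Delta$. The two expressions therefore cannot be equal entrywise; the claim is that, after the diagonal rescaling, the matrices $\delta - \mcK^{\He}_0$ and $\delta - K_{\rm crit}$ have the same $\Delta \times \Delta$ determinant. I expect this to follow from an explicit sequence of elementary column operations built from the Hermite three-term recurrence, which allows $t^{n+2}$-type contributions in column $j$ to be rewritten in terms of contributions from columns $j+2, j+4, \dotsc$, successively truncating each column to the stated range. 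Identifying the precise column-reduction recipe (or an equivalent algebraic identity) is the main technical obstacle of the proof; once it is in place, the remaining verification is bookkeeping, cleanest by induction on $\Delta$. A useful sanity check is that the formula forces $K_{\rm crit}(i, \Delta-1) \equiv 0$, so the last column of $\delta - K_{\rm crit}$ is $e_{\Delta-1}$ and cofactor expansion immediately reduces the determinant to a $(\Delta-1)\times(\Delta-1)$ one — the inductive structure to exploit.
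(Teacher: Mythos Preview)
Your sanity check is wrong, and this error points to the missing idea. When $j=\Delta-1$ the summation range in $K_{\rm crit}(i,\Delta-1)$ is $\ell=0$ to $\ell=0$, a single term, not an empty sum; for instance $K_{\rm crit}(\Delta-2,\Delta-1)=\tfrac{1}{2\pi}\sin\tfrac{\pi}{2}\,\Gamma(\tfrac{1}{2})=\tfrac{1}{2\sqrt{\pi}}\neq0$, and the diagonal entry is $\tfrac{1}{2}$. So the last column of $\delta-K_{\rm crit}$ is not $e_{\Delta-1}$, and your proposed induction on $\Delta$ via cofactor expansion does not get off the ground.

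More importantly, you have misdiagnosed the mismatch between the two kernels. The discrepancy between a summation range depending on the indices and one depending on $\Delta$ is not resolved by column operations; it disappears once you reverse the indices. The paper proves (Theorem~\ref{thm:critical_kernels}) that
\[
2^{(i-j)/2}\,\mcK^{\Delta}_{\rm crit}(\Delta-1-i,\,\Delta-1-j)=\mcK_0(i,j)
\]
entrywise for $i-j$ odd, with both sides vanishing for $i-j$ even nonzero and equal to $\tfrac{1}{2}$ on the diagonal. Under $i\mapsto\Delta-1-i$ the upper limit $(\Delta-j-1)/2$ becomes $\lfloor j/2\rfloor$, which now depends only on the (new) column index, and the identity then reduces to explicit double-factorial identities. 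The computation of $\mcK_0(i,j)$ is also far simpler than your half-line integration: the paper uses the Christoffel--Darboux form $\mcK_0(i,j)=\frac{1}{\sqrt{2\pi}\,j!}\frac{\He_{i+1}(0)\He_j(0)-\He_i(0)\He_{j+1}(0)}{i-j}$ and plugs in $\He_{2m}(0)=(-1)^m(2m-1)!!$, $\He_{2m+1}(0)=0$, avoiding any expansion or Chu--Vandermonde collapse. A smaller point: you invoke Conjecture~\ref{conj:critical_classification} to pass from $t_+=c$ to the corner condition~\eqref{corner-condition}, which is circular; the paper simply identifies the critical case with $s=0$.
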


We remark that we encounter the same phase transition that was observed in~\cite{GTW01}.
Indeed, one side of their transition is a deterministic regime as~\cite{GTW01} only considers the behavior of $\lambda_1$, but this corresponds to $\lambda_1 = k$ fixed.
On the other hand, the fluctuations for the number of rows of length $k$ are described by the Tracy--Widom distribution.

Let us discuss how our results relate with the literature.
If we take $x_i = \alpha$ and $y_j = 1$ for some positive constant $\alpha$, then this case has been considered previously in~\cite{borodin2017asep,Johansson01,GTW01,GTW02,GTW02II,nazarov2022skew}.
We perform an explicit analysis of this example in Section~\ref{sec:const-funct-krawtch}, which we then extend to the piecewise constant case in Section~\ref{sec:piec-const-spec}.
By using the relationship with Aztec diamonds~\cite{Johansson2002non}, the limit shape correlation kernel we derive has appeared in~\cite{BBCCR17}.
When specializing $x_i = y_i = 1$, we obtain the $\GL_n \times \GL_k$ results in~\cite{nazarov2021skew}, which were also previously known and studied (sometimes under different guises such as the Krawtchouk ensemble) in~\cite{BO05,BO05gamma,BO05II,BO06,borodin2007asymptotics,GTW01,ismail1998strong,Johansson01,Johansson2002non,pittel2007limit,panova2018skew,Sepp98}, although this list is likely not exhaustive.
For more precise details, see, \textit{e.g.},~\cite[Sec.~5.6.1]{nazarov2021skew}.

For the principal specializations $x_{i} = q^{i-1},\; y_{j}=q^{j-1}$ and $x_{i}=q^{i-1},\; y_{j}=q^{-j+1}$, which correspond to exponential functions, we recover the $q$-Krawtchouk polynomial ensemble.
We also take the limit $q\to 1$ in such a way that $\lim n(q-1) = \gamma$, the diagrams converge to the corresponding limit shape.
In these cases the equations can be solved explicitly, so as a consequence of Theorem~\ref{thm:limit-shape}, we obtain another proof of the limit shapes from~\cite{nazarov2022skew}, where they were derived using the $q$-difference equations for the $q$-Krawtchouk polynomials in~\cite{nazarov2022skew}.

For the usual Schur measure, the simplest way is to take one non-zero value in the sequence of Miwa variables  $p_{\ell}(X) = \frac{1}{\ell} \sum_{i=1}^{\infty} x_i^{\ell}$ with $p_{1}=\xi,p_{2}=p_{3}=\dots=0$ to obtain the poissonization of the Plancherel measure \cite{borodin2000asymptotics}.
This can be seen as letting $x_{i}=\frac{\xi}{n}, y_{j}=\frac{\xi}{k}$ and taking the limit $n,k\to\infty$, which we can also undertake for the skew case.
However, it produces the same results as the diagram does not ``feel'' the $n\times k$ rectangle it is confined in.

We expect the discrete Hermite kernel to be universal for the fluctuations at the corner.
As per Conjecture~\ref{conj:critical_classification}, we believe for our class of functions $f,g$ the limit shape must take a flat approach in the corner; that is, $\Omega'(c) = 0$.
However, if we allow $f,g$ to grow to infinity, we can obtain non-flat approach to the corner, which we demonstrate in Section~\ref{sec:constant-q} by taking a generalized principal specialization with constant $q$.
In this case, the limit shape is a linear function, and we conjecture that corner fluctuations are described by a $q$-analogue of the discrete Hermite kernel.

%\begin{cor}[q-Krawtchouk]
%  \label{thm:limit-shape-for-q-krawtchouk}
%  As $ n,k \to \infty$ and $q \to 1$ in such a way that $q = 1 - \frac{\gamma} {n}$ and $\frac{k}{n}\to c$, the upper boundary $F_n$ of a rotated and scaled Young diagram $\lambda$
%  %for a highest weight in the decomposition of exterior algebra $\bigwedge(\mathbb{C}^n\otimes\CC^k) $ into irreducible $\GL_{n}\times\GL_{k}$-representations
%  converges pointwise in probability with respect to the probability measure~\eqref{eq:GL-probability-measure} with
%  \begin{equation}
%  \label{eq:principal_spec}
%  x_{i} = q^{i-1} \approx \exp\left( \gamma \frac{i-1}{n} \right),
%  \qquad\qquad
%  y_{j}=q^{\pm(j-1)} \approx \exp\left( \gamma \frac{\pm(j-1)}{k} \right),
%  \end{equation}
%  to the limit shape given by Equation~\eqref{eq:limit-shape-as-integral}, where the limit density $\rho_{\pm}(t)$ is given by the formula
%  \begin{subequations}
%  \label{eq:q-krawtchouk-limits}
%  \begin{align}
%    \label{eq:q-krawtchouk-limit-shape}
%    \rho_-(t) & = \dfrac{1}{\pi}
%    \arccos\left(\mathrm{sgn}(-\gamma)\dfrac{e^{\gamma-\frac{\gamma  t}{2}}}{2}
%      \dfrac{1-e^{\gamma(c-1)}}{\sqrt{(1-e^{\gamma t})(1-e^{\gamma(c+1-t)})}}\right).
%  \\
%    \label{eq:q-krawtchouk-inv-limit-shape}
%    \rho_+(t) &= \frac{1}{\pi}
%    \arccos\left(\mathrm{sgn}(-\gamma)
%      \frac{e^{\frac{\gamma}{2}(t-c)}}{2}
%      \dfrac{1-e^{\gamma c}-e^{\gamma (c-t)}+e^{\gamma (c+1-t)}}
%      {\sqrt{(1-e^{\gamma t})(1-e^{\gamma(c+1-t)})}}
%    \right).
%  \end{align}
%  \end{subequations}
%\end{cor}

The paper is organized as follows.
In Section~\ref{sec:skew-howe-duality} we discuss skew Howe duality, the dual Cauchy identity for $\GL_{n} \times \GL_{k}$ characters, and sampling of random Young diagrams with respect to the probability measure~\eqref{eq:GL-probability-measure}. % with dual Robinson--Schensted--Knuth (RSK) algorithm.
We also explain a combinatorial non-intersecting lattice path realization of the measure~\eqref{eq:GL-probability-measure} and its two graphic representations as domino tilings of the Aztec diamond with gluing condition and as lozenge tilings of the hexagon with gluing condition along the diagonal.
In Section~\ref{sec:free-ferm-repr} we explain free fermionic represenation for the ensemble and prove Theorem~\ref{thm:determinantal-ensemble-corr-kernel}.
Next, we discuss the asymptotics of our measure by splitting into three parts.
The first part is Section~\ref{sec:bulk-asymptotics}, where we study bulk asymptotics of the correlation kernel and prove Theorem~\ref{thm:correlation-kernel-bulk} with Theorem~\ref{thm:limit-shape} and Corollary~\ref{cor:uniform-convergence}.
Then in Sections~\ref{sec:boundary-asymptotics},~\ref{sec:pearcey-asymptotics},~\ref{sec:asymptotics-near-the-corner},~\ref{sec:asymptotics-at-the-corner}, we discuss the asymptotics of the correlation kernel near and at the boundary and prove Theorems~\ref{thm:boundary-asymptotics},~\ref{thm:Pearcey_kernel},~\ref{thm:near-corner-asymptotics},~\ref{thm:corner-asymptotics}, respectively.
In Section~\ref{sec:examples}, we give a number of examples of our results for certain specializations:
\begin{itemize}
\item constant (Section~\ref{sec:const-funct-krawtch});
\item piecewise constant (Section~\ref{sec:piec-const-spec});
\item general monomials (Section~\ref{sec:monomial-functions});
\item alternating parameters and the relation to symplectic Young diagrams (Section~\ref{sec:gl=2sp}).
\end{itemize}
The relation of principal specialization of the $\GL_{n} \times \GL_{k}$-characters to the $q$-Krawtchouk ensemble is discussed in Section~\ref{sec:princ-spec-q-krawtchouk}.
In particular, in Section~\ref{sec:uniform-convergence}, we provide an alternative proof of Corollary~\ref{cor:uniform-convergence} on the uniform convergence of Young diagrams to the limit shape for the principal specialization by a direct computation.
In the limit for the $q$-Krawtchouk ensemble, we also require to take the limit $q \to 1$ as $n,k \to \infty$, but we also consider a limiting case of this behavior with $q$ being a constant in Section~\ref{sec:constant-q}. Note, that $q=\mathrm{const}$ does not satisfy our general assumptions as there are no functions $f(s)$ and $g(s)$ corresponding to this specialization. Therefore Theorems \ref{thm:correlation-kernel-bulk}, \ref{thm:limit-shape}, \ref{thm:boundary-asymptotics}, \ref{thm:near-corner-asymptotics}, \ref{thm:corner-asymptotics} are not applicable, but we still describe the limit shape and conjecture the fluctuations in the corner. 
We conclude by presenting some open problems related to the results of this paper in Section~\ref{sec:conclusion-outlook}.

\section*{Acknowledgements}

The authors thank Daniil Sarafannikov for deriving formula \eqref{eq:sp_half_limit_density}.
The authors thank J\'er\'emie Bouttier, Janko Gravner, Arno Kuijlaars, Nicolai Reshetikhin, Walter van Assche, and Anatoly Vershik for useful conversations.
The authors thank Cesar Cuenca and Matteo Mucciconi for useful comments on an earlier draft.

Dan Betea was supported by ERC grant COMBINEPIC No.~759702.
Anton Nazarov was supported by the Russian Science Foundation under grant No.~21-11-00141.
Travis Scrimshaw was partially supported by Grant-in-Aid for JSPS Fellows 21F51028 and for Scientific Research for Early-Career Scientists 23K12983.

%===============================================================================
\section{Skew Howe duality and dual RSK algorithm}
\label{sec:skew-howe-duality}

A \defn{partition} $\lambda = (\lambda_1, \lambda_2, \lambda_3, \ldots)$ is a weakly decreasing sequence of nonnegative integers with only finitely many nonzero entries.
We will consider our partitions to be given to their Young diagrams, which we draw using English convention.
We let $\lambda'$ denote the \defn{conjugate} shape, given by reflecting over the $y = -x$ line.
A \defn{tableau} $T$ is a filling of the Young diagram of $\lambda$ by nonnegative integers, and it is \defn{semistandard} if the rows weakly increase from left-to-right and columns strictly increase from top-to-bottom.
Define $\shape(T) = \lambda$ to be the \defn{shape} of $T$.

We can encode a basis element of $\bigwedge\left(\CC^{n}\otimes \CC^{k}\right)$ of the form
\begin{equation}
(e_{i_1}\otimes e_{j_1}) \wedge (e_{i_2}\otimes e_{j_2}) \wedge \cdots \wedge (e_{i_{\ell}}\otimes e_{j_{\ell}})
\end{equation}
with $(i_k, j_k) < (i_{k+1}, j_{k+1})$ in lexicographic order for all $1 \leq k \leq \ell$, as a $\{0,1\}$-matrix $M$ by $M_{i_k,j_k} = 1$ for all $k$ and $0$ otherwise.
Subsequently, we can represent $M$ as the corresponding pair of sequences of row numbers $(i_k)_{k=1}^{\ell}$ and column numbers $(j_k)_{k=1}^{\ell}$, or written as a biword:
\begin{equation}
\begin{bmatrix}
i_1 & i_2 & i_3 & \cdots & i_{\ell} \\
j_1 & j_2 & j_3 & \cdots & j_{\ell}
\end{bmatrix}.
\end{equation}

Next, we describe the \defn{dual RSK bijection}~\cite{knuth1970permutations} (see also~\cite[Ch.~7]{ECII}) that can be considered as a combinatorial realization the decomposition~\eqref{eq:exterior-algebra-decomposition} (see, \textit{e.g.},~\cite{BS17,Lothaire02}).
We start with a pair of empty semistandard tableau $(P_0, Q_0)$, and proceed inductively on $m = 0, 1, \dotsc, \ell$ as follows.
Consider $(P_m, Q_m)$, and we perform the following modified Schensted insertion algorithm~\cite{knuth1970permutations} starting with $j_m$ starting from the top row.
To row $r$, we try to insert letter $x$ as follows:
\begin{itemize}
\item If $x$ is strictly larger than every other letter in $r$ (including if $r$ is empty), we add $x$ to the end of $r$ and terminate.
\item Otherwise, find the smallest $y \geq x$, replace the leftmost such occurrence in $r$, and insert $y$ into to the row below $r$.
\end{itemize}
Let $P_{m+1}$ be the resulting semistandard tableau, and define $Q_{m+1}$ as $Q_m$ with adding a box with entry $i_m$ to $Q$ such that $\shape(Q_{m+1})' = \shape(P_{m+1})$.
This gives a bijection between the $m \times n$ $\{0,1\}$-matrices that index basis elements of $\bigwedge\left(\CC^{n}\otimes \CC^{k}\right)$ and pairs of semistandard tableau $(P, Q)$ such that $\shape(P)' = \shape(Q)$ with the entries in $P$ (resp.\ $Q$) being at most $n$ (resp.\ $m$).
The tableau $P$ (resp.\ $Q$) is known as the \defn{insertion tableau} (resp.\ \defn{recording tableau}).

Using this, we can obtain a sampling algorithm for the random diagram with distribution~\eqref{eq:GL-probability-measure} with parameters $x_1, \dotsc, x_n$, $y_1, \dotsc, y_k$ given as follows.
We form the random $n\times k$ matrix $M$ so that probability to have 0 at the position $(i,j)$ is $(1+x_{i}y_{j})^{-1}$ (thus it is $1$ with probability $\frac{x_i y_j}{1 + x_i y_j}$).
We then apply dual RSK to $M$ and taking the shape $\shape(P)$ of the insertion tableau $P$ (equivalently, of the recording tableau) gives the sampling algorithm.
All of our random Young diagrams will be obtained using this sampling algorithm.

%===============================================================================
\section{Free fermionic representation}
\label{sec:free-ferm-repr}

Another way to represent a partition $\lambda = (\lambda_1, \lambda_2, \dotsc)$ is by using the coordinates $a_i = \lambda_i - i + \frac{1}{2}$ on the shifted integer lattice $\ZZsh := \ZZ + \frac{1}{2}$.
Furthermore, the sequences $(a_i \in \ZZsh)_{i=1}^{\infty}$ such that $a_i > a_{i+1}$ for all $i$ and there exists an $\ell$ such that $a_i = -i + \frac{1}{2}$ for all $i > \ell$ are in bijection with partitions; in particular, we have $\ell(\lambda) \leq \ell$.
The set $(a_i)_{i=1}^{\lambda}$ corresponding to a partition $\lambda$ is known as the Maya diagram of $\lambda$.
This can be visually constructed by putting a dot on each horizontal edge of the \emph{conjugate} Young diagram (including all the infinite number of edges at the top), then rotating it so that the corner points down touching at $0 \in \RR$ (known as Russian convention), and then projecting onto the line.
See Fig.~\ref{fig:maya_diagram} for an example.

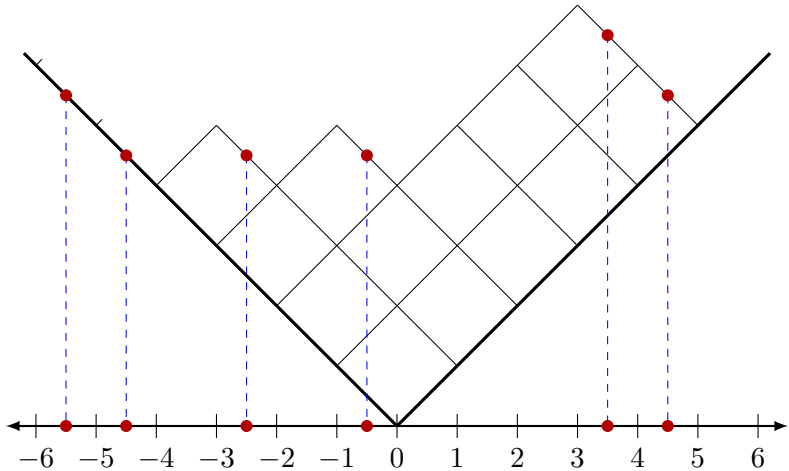
\begin{figure}
\[
\begin{tikzpicture}[scale=.8,>=latex]
\draw[-,very thick] (0,0) -- ++(6.2,6.2);
\draw[-,very thick] (0,0) -- ++(-6.2,6.2);
\draw[<->,thick] (-6.5,0) -- (6.5,0);
\foreach \la/\i in {4/1,3/2,2/3,2/4,2/5}
  \draw[-] (\i,\i) -- ++(-\la,\la);
%  \draw[-] (45:\i) -- ++(135:\la);
\foreach \la/\i in {5/1,5/2,2/3,1/4} {
  \draw[-] (-\i,\i) -- ++(\la,\la);
  \draw[color=blue,dashed] (-\i,\i) + (\la+.5,\la-.5) -- (\la-\i+.5,0);
  \fill[color=darkred] (-\i,\i) + (\la+.5,\la-.5) circle (0.1);
  \fill[color=darkred] (\la-\i+.5,0) circle (0.1);
%  \draw[-] (135:\i) -- ++(45:\la);
}
\foreach \i in {5,6} {
  \draw[-] (-\i,\i) -- ++(.1,.1);
  \draw[color=blue,dashed] (-\i+.5,\i-.5) -- (-\i+.5,0);
  \fill[color=darkred] (-\i+.5,\i-.5) circle (0.1);
  \fill[color=darkred] (-\i+.5,0) circle (0.1);
}
\foreach \i in {-6,-5,...,6} {
  \draw[-] (\i,0.2) -- (\i,-0.2) node[anchor=north] {$\i$};
}
\end{tikzpicture}
\]
\caption{The construction of the Maya diagram of the partition $(5,5,2,1)$.}
\label{fig:maya_diagram}
\end{figure}

We want to consider the elements in the Maya diagram as fermions, particles where no two occupy the same position.
Thus we encode the positions of particles as basis vectors in the infinite wedge space $\bigwedge \CC[\ZZsh]$.
This has a subspace $V$ called \defn{fermionic Fock space} defined as the span of
\begin{equation}
\{ v_{a_1} \wedge v_{a_2} \wedge \cdots \mid a_1 > a_2 > \cdots \text{ and there exists $\ell, C \in \ZZ$ such that } a_i = -i + C + \frac{1}{2} \text{ for all } i > \ell \}.
\end{equation}
In particular, this gives us the classical correspondence between Maya diagrams $(a_i)_{i=1}^{\infty}$ and the subspace of vectors in $V_0 \subseteq V$ spanned by those with $C = 0$ above, and for a partition $\lambda$, we write the corresponding basis element as $\ket{\lambda}$.
This space $V$ is an irreducible representation of the infinite dimensional Clifford algebra generated by $\{\psi_i, \psi_i^{\dagger}\}_{i \in \ZZsh}$ that satisfy the canonical commutation relations:
\begin{equation}
\psi_i \psi_j + \psi_j \psi_i = \psi^{\dagger}_i \psi^{\dagger}_j + \psi^{\dagger}_j \psi^{\dagger}_i = 0,
\qquad\qquad
\psi_i \psi^{\dagger}_j + \psi^{\dagger}_j \psi_i = \delta_{ij},
\end{equation}
where the action on $V$ is given by
\begin{subequations}
\begin{align}
\psi_i \cdot (v_{i_1} \wedge v_{i_2} \wedge \cdots) & = v_i \wedge v_{i_1} \wedge v_{i_2} \wedge \cdots,
\\
\psi_i^{\dagger} \cdot (v_{i_1} \wedge v_{i_2} \wedge \cdots) & = \begin{cases} (-1)^{j-1} v_{i_1} \wedge \cdots \wedge \widehat{v}_{i_j} \wedge \cdots & \text{if } i_j = i, \\ 0 & \text{otherwise,} \end{cases}
\end{align}
\end{subequations}
where $\widehat{v}_{i_j}$ denotes that the vector is not present.
Because of this action, the operators $\psi_i$ and $\psi_i^{\dagger}$ are known as elementary (fermionic) \defn{creation} and \defn{annihilation} operators, respectively.
There are also the \defn{current operators} $\{\alpha_{\ell}\}_{\ell \in \ZZ}$ defined by\footnote{Special care is needed for the case $\ell = 0$, where we need a normal ordering on the product. The result is that $\alpha_0$ returns the value $C$ of any basis vector of $V$. However, we do not use $\alpha_0$ here; so we omit the normal ordering.}
\begin{equation}
  \label{eq:boson-fermion}
  \alpha_{\ell}=\sum_{j \in \ZZsh}\psi_{j-\ell}\psi^{\dagger}_j,
\end{equation}
and these satisfy commutation relations of Heisenberg algebra
\begin{equation}
  \label{eq:heisenberg-algebra}
  [\alpha_{m}, \alpha_{\ell}] = m\delta_{m+\ell, 0}.
\end{equation}
Equation~\eqref{eq:boson-fermion} is one half of the \defn{boson-fermion correspondence}, and $V_0$ becomes a representation of the Heisenberg algebra.
For more information, see, \textit{e.g.},~\cite[Ch.~14]{kac90} or~\cite{Okounkov01}. 

There is an anti-involution on the Clifford algebra defined by $\psi_i \longleftrightarrow \psi_i^{\dagger}$, which also  sends $\alpha_{\ell} \longleftrightarrow \alpha_{-\ell}$, that allows us to define a dual representation $V^{\dagger}$.
We will denote the dual basis element of $\ket{\lambda}$ as $\bra{\lambda}$, and we denote the natural pairing by $\braket{\mu}{\lambda} = \delta_{\lambda\mu}$.
Furthermore, this pairing has the property that for any operator $\Psi$, the notation
\begin{equation}
\bra{\mu} \Psi \ket{\lambda} = (\bra{\mu} \Psi) \ket{\lambda} = \bra{\mu} (\Psi \ket{\lambda})
\end{equation}
is unambiguous.
This can be extended to all of $V$, but we will only be concerned with $V_0$.

Next, we define generating series that are formal \defn{fermion fields}
\begin{equation}
\psi(z) = \sum_{j \in \ZZsh} \psi_j z^j,
\qquad\qquad
\psi^{\dagger}(w) = \sum_{j \in \ZZsh} \psi^{\dagger}_j w^{-j}.
\end{equation}
Let $X = (x_1, x_2, \dotsc)$ and $Y = (y_1, y_2, \dotsc)$.
We also define \defn{half-vertex operators} as
\begin{equation}
  \label{eq:gamma-definition}
  \Gamma_{\pm}(X) = \exp\left(\sum_{\ell=1}^{\infty}p_{\ell}(X)\alpha_{\pm \ell}\right),
\end{equation}
where $p_{\ell}(X) = \frac{1}{\ell} \sum_{i=1}^{\infty} x_i^{\ell}$ are the Miwa variables, which are a rescaled version of the powersum symmetric functions.
Note that the Clifford algebra anti-involution sends $\Gamma_+(X) \longleftrightarrow \Gamma_-(X)$.
From the definition and Heisenberg relations~\eqref{eq:heisenberg-algebra}, we have
\begin{subequations}
\begin{gather}
[\Gamma_+(X), \Gamma_+(Y)] = [\Gamma_-(X), \Gamma_-(Y)] = 0,
\\
\Gamma_+(X) \ket{0} = \ket{0},
\qquad\qquad
\bra{0} \Gamma_-(Y) = \bra{0}.
\end{gather}
\end{subequations}
Furthermore, by the Baker--Campell--Hausdoff formula, the half-vertex operators satisfy
\begin{subequations}
\begin{align}
\Gamma_+(X) \Gamma_-(Y) & = H(X; Y) \Gamma_-(Y) \Gamma_+(X),
\\ \Gamma_{\pm}(X) \psi(z) & = H(X; z^{\pm}) \psi(z) \Gamma_{\pm}(X),
\\ \Gamma_{\pm}(X) \psi^{\dagger}(w) & = H(X; w^{\pm1})^{-1} \psi^{\dagger}(w) \Gamma_{\pm}(X),
\end{align}
\end{subequations}
where
\begin{equation}
H(X; Y) = \prod_{i,j} \frac{1}{1 - x_i y_j},
\qquad\qquad
E(X; Y) = \prod_{i,j} (1 + x_i y_j).
\end{equation}

We can write Schur polynomials as the matrix coefficients (see, \textit{e.g.},~\cite{Okounkov01})
\begin{equation}
  \label{eq:schur-through-gamma}
  \bra{0} \Gamma_+(X) \ket{\lambda} = s_{\lambda}(X)
\end{equation}
by using Wick's theorem and the Jacobi--Trudi formula.
From the Clifford algebra anti-involution (or duality), we also have
\begin{equation}
\bra{\lambda} \Gamma_-(X) \ket{0} = s_{\lambda}(X).
\end{equation}
There is an involution on symmetric functions defined by $\omega s_{\lambda}(Y) = s_{\lambda'}(Y)$, which sends $p_{\ell}(Y) \longleftrightarrow (-1)^{\ell-1} p_{\ell}(Y)$.
If we formally apply $\omega$ to the half-vertex operator $\Gamma_{\pm}(Y)$, we obtain the half-vertex operator
\begin{equation}
  \label{eq:gamma-prime-definition}
\Gamma'_{\pm}(Y) := \exp\left(\sum_{\ell=1}^{\infty}(-1)^{\ell-1} p_{\ell}(Y) \alpha_{\pm \ell}\right) = \Gamma^{-1}_{\pm}(-Y).
\end{equation}
By the dual Jacobi--Trudi formua and Wick's theorem, we have
\begin{equation}
  \label{eq:schur-conj-through-gamma}
  \bra{0} \Gamma'_+(Y) \ket{\lambda} = s_{\lambda'}(Y).
\end{equation}
Note that~\eqref{eq:schur-conj-through-gamma} with~\eqref{eq:schur-through-gamma} implies the nontrivial relation $\Gamma'_+(Y) \ket{\lambda} = \Gamma_+(Y) \ket{\lambda'}$.

Next we have the dual Cauchy identity by
\begin{align}
\sum_{\lambda} s_{\lambda}(X) s_{\lambda'}(Y) & = \sum_{\lambda} \bra{0} \Gamma_+(X) \ket{\lambda} \cdot \bra{\lambda} \Gamma'_-(Y) \ket{0} \nonumber
\\
& =\bra{0} \Gamma_+(X) \Gamma'_-(Y) \ket{0} = \bra{0} \Gamma_+(X) \Gamma_-^{-1}(-Y) \ket{0} \nonumber
\\ & = H(X; -Y)^{-1} \bra{0} \Gamma_-^{-1}(-Y)  \Gamma_+(X) \ket{0} \nonumber
\\ & = E(X; Y) \braket{0}{0} = E(X; Y).
\end{align}
The correlation kernel is then obtained by commuting $\Gamma'_{-}$ and
fermionic operators $\psi_{m}$ in the correlator
$\bra{0} \Gamma_{+}(t) \psi_{m} \psi^{\dagger}_{m'} \Gamma'_{-}(t') \ket{0}$ as
\begin{equation}
  \label{eq:correlation-kernel}
  \mcK(m,m') = \oint\oint_{\abs{w} < \abs{z}} \frac{\dz}{2\pi \imi z}
  \frac{\dw}{2\pi \imi w} \frac{K(z)}{K(w)}
  z^{-m}w^{m'}\frac{\sqrt{zw}}{z-w},
\end{equation}
where the contour for $z$ contains $-y_j$ for all $j$ and does not contain $x_i^{-1}$ for all $i$, and the contour for $w$ encircles zero and
\begin{equation}
  \label{eq:Fz-expression_ps}
  K(z) = \prod_{i=1}^{n}\frac{1}{1-x_{i}z} \prod_{j=1}^{k}\frac{1}{1+y_{j}/z}. 
\end{equation}
Indeed, we first compute
\begin{align}
\bra{0} \Gamma_{+}(X) \psi(z) \psi^{\dagger}(w) \Gamma'_{-}(Y) \ket{0}
 & = \bra{0} \Gamma_{+}(X) \psi(z) \psi^{\dagger}(w) \Gamma_-^{-1}(-Y) \ket{0} \nonumber
%\\ & = H(Y; w^{-1})^{-1} \bra{0} \Gamma_{+}(t) \psi(z) \Gamma_-^{-1}(-Y) \psi^{\dagger}(w) \ket{0}
%\\ & = \frac{H(-Y; z^{-1})}{H(-Y; w^{-1})} \bra{0} \Gamma_{+}(X) \Gamma_-^{-1}(-Y) \psi(z) \psi^{\dagger}(w) \ket{0}
%\\ & = H(X; -Y)^{-1} \frac{H(-Y; z^{-1})}{H(-Y; w^{-1})} \bra{0} \Gamma_-^{-1}(-Y) \Gamma_{+}(t) \psi(z) \psi^{\dagger}(w) \ket{0}
%\\ & = H(X; -Y)^{-1} \frac{H(X; z) H(-Y; z^{-1})}{H(-Y; w^{-1})} \bra{0} \psi(z) \Gamma_{+}(t) \psi^{\dagger}(w) \ket{0}
\\ & = H(X; -Y)^{-1} \frac{H(X; z) H(-Y; z^{-1})}{H(X; w) H(-Y; w^{-1})} \bra{0} \psi(z) \psi^{\dagger}(w) \ket{0} \nonumber
\\ & = H(X; -Y)^{-1} \frac{H(X; z) H(-Y; z^{-1})}{H(X; w) H(-Y; w^{-1})} \sum_{\ell=0}^{\infty} z^{-1/2-\ell} w^{1/2+\ell} \nonumber
%\\ & = H(X; -Y)^{-1} \frac{H(X; z) H(-Y; z^{-1})}{H(X; w) H(-Y; w^{-1})} \sqrt{zw} \cdot z^{-1} \cdot \sum_{\ell=0}^{\infty} \left( \frac{w}{z}\right)^{\ell}.
%\\ & = H(X; -Y)^{-1} \frac{H(X; z) H(-Y; z^{-1})}{H(X; w) H(-Y; w^{-1})} \frac{\sqrt{zw} \cdot z^{-1}}{1 - w/z}
\\ & = H(X; -Y)^{-1} \frac{H(X; z) H(-Y; z^{-1})}{H(X; w) H(-Y; w^{-1})} \frac{\sqrt{zw}}{z - w}.
\end{align}
To obtain the kernel, we use the above computation with the residue theorem:
\begin{align}
\mcK(m,m') & = E(X; Y)^{-1} \bra{0} \Gamma_{+}(t) \psi_{m} \psi^{\dagger}_{m'} \Gamma'_{-}(t') \ket{0} \nonumber
\\ & = E(X; Y)^{-1} \oint \oint_{\abs{w} < \abs{z}} \frac{\dz}{2\pi \imi z} \frac{\dw}{2\pi \imi w} z^{-m} w^{m'} \bra{0} \Gamma_{+}(t) \psi(z) \psi^{\dagger}(w) \Gamma'_{-}(t') \ket{0},
\end{align}
where we note that $H(X; -Y)^{-1} = E(X; Y)$ is our normalization factor.
We thus complete the proof of Theorem \ref{thm:determinantal-ensemble-corr-kernel}.

%===============================================================================
\section{Asymptotics}

\begin{figure}
\[
\begin{tikzpicture}[scale=1.2]
\draw[->,thin,gray] (-2,0) -- (2,0);
\draw[->,thin,gray] (0,-2) -- (0,2);
\draw[red,thick] (0,0) circle (1.4);
\draw[blue,thick,xscale=1.4] (.3,0) circle (1);
\draw[UQpurple,dashed] (38.5:1.4) arc (38.5:-38.5:1.4);
\fill[black] (38.5:1.4) circle (.07) node[anchor=south west] {$z_1$};
\fill[black] (-38.5:1.4) circle (.07) node[anchor=north west] {$z_2$};
\end{tikzpicture}
\qquad
\begin{tikzpicture}[scale=1.2]
\draw[->,thin,gray] (-2,0) -- (2,0);
\draw[->,thin,gray] (0,-2) -- (0,2);
\draw[blue,thick] (0,0) circle (1.4);
\draw[red,thick,rounded corners] (1.4,0) -- (15:1.8) arc (15:345:1.8) -- cycle;
%\fill[UQpurple] (1.4-.1,-.1) rectangle (1.4+.1,.1);
\fill[black] (1.4,0) circle (.1) node[anchor=south east] {$\zcrit$};
\end{tikzpicture}
\qquad
\begin{tikzpicture}[scale=1.2]
\clip (-2.1,-2.1) rectangle (2.1,2.1);
\draw[->,thin,gray] (-2,0) -- (2,0);
\draw[->,thin,gray] (0,-2) -- (0,2);
\draw[blue,thick] (0,0) circle (.3);
\draw[red,thick] (6,0) ++ (160:6) arc (160:200:6);
\fill[black] (87:.3) circle (.07) node[anchor=south west] {$z_1$};
\fill[black] (-87:.3) circle (.07) node[anchor=north west] {$z_2$};
\fill[UQpurple] (-.1,-.1) rectangle (.1,.1);
\draw[HUgreen,line width=2pt] (1.6,.1) -- (1.6,-.1) node[anchor=north] {$x_n^{-1}$};
\draw[HUgreen,line width=2pt] (-1.6,.1) -- (-1.6,-.1) node[anchor=north] {$-y_1$};
\end{tikzpicture}
\]
\caption{Contours for the sine kernel (left), Airy kernel (center), and discrete Hermite kernel with $s = 0$ (right).}
\label{fig:contour_cartoons}
\end{figure}
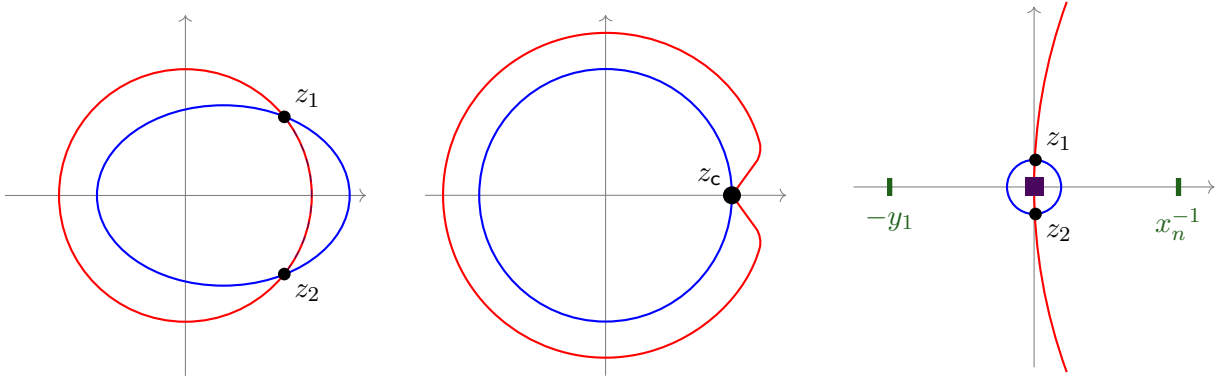

%%%%%%%%%%%%%%%%%%%%
\subsection{Bulk and frozen regions}
\label{sec:bulk-asymptotics}
In this subsection we prove Theorem~\ref{thm:correlation-kernel-bulk}, Theorem~\ref{thm:limit-shape} and Corollary~\ref{cor:uniform-convergence}.

Assume that the parameters $x_{i},y_{j}$ are given by $x_{i}=f(i/n)$,
$y_{j}=g(j/k)$, with $f,g \colon [0, 1] \to \RR_{\geq 0}$ being
piecewise $\mcC^1$ nonnegative functions $f(s) > 0, g(s) \geq 0$.
Consider the limit $n,k\to\infty$ such that $\lim\frac{k}{n}=c$. Take $m=\lfloor nt\rfloor+l$, $m'=\lfloor nt\rfloor+l'$ and denote by $S_{n}(z,t)$ the following expression
\begin{equation}
  \label{eq:action-for-finite-n}
  S_{n}(z,t)=-\frac{1}{n}\sum_{i=1}^{n}\ln(1-f(i/n) z)-\frac{1}{n}\sum_{j=1}^{k}\ln(z+g(j/k))+\frac{k-\lfloor nt\rfloor}{n}\ln z.
\end{equation}
The correlation kernel is then written as
\begin{equation}
\mcK(m,m') =\oint\oint_{\abs{w}<\abs{z}}\frac{dz}{2\pi \imi z} \frac{dw}{2\pi \imi w} e^{n(S_{n}(z,t)-S_{n}(w,t))} \frac{\sqrt{zw}}{z-w}z^{-l}w^{l'}.
\end{equation}

Define the action as the limit of $S_{n}(z,t)$ as $n\to\infty$, which is given by the (Riemann) integral
\begin{equation}
  \label{eq:action}
  S(z)=-\int_{0}^{1} \ds
  \ln(1-f(s)z)-c\int_{0}^{1} \ds \ln(1+g(s)/z) - t \ln z.
\end{equation}
We choose our branch cuts of the logarithms such that the derivative of $S(z)$ has branch cuts $[-\max g,-\min g]$ and $[(\max f)^{-1}, (\min f)^{-1}]$. 
Then $\frac{1}{n}\ln K(z)-t\ln z=S(z)+\mathcal{O}(\frac{1}{n})$ and the limit shape of
Young diagrams with respect to the distribution~\eqref{eq:GL-probability-measure} is determined by the limit density
\begin{equation}
  \label{eq:rho-from-correlation-kernel}
  \rho(t)=\lim_{n,k\to\infty}\mcK(nt,nt) =
  \lim_{n,k\to\infty}\oint\oint_{\abs{w}<\abs{z}}\frac{dz}{2\pi \imi z}
  \frac{dw}{2\pi \imi w} e^{n(S(z)-S(w))} \frac{\sqrt{zw}}{z-w}\left(1+o(1)\right).
\end{equation}
Analysis as in~\cite{okounkov2007random} demonstrates that only critical points of the action (the solutions of $\partial S(z)=0$) contribute to the integral \eqref{eq:rho-from-correlation-kernel}. Let us demonstrate that the equation
\begin{equation}
  \label{eq:zdz-S-eq-zero}
  z\partial_{z}S(z) = \int_{0}^{1} \ds \frac{f(s)z}{1 - f(s)z} +
  c \int_{0}^{1} \ds \frac{g(s)}{z+g(s)}-t = 0, 
\end{equation}
has either two complex-conjugate roots  $z_{1},z_{2}$ with $z_{2}=\bar z_{1}$, in which case the contours should be deformed as in Fig.~\ref{fig:contour_cartoons}(left), or only real roots. 

Consider the approximation to Equation~\eqref{eq:zdz-S-eq-zero} for finite $n$, which from~\eqref{eq:action-for-finite-n} reads
\begin{equation}
  \label{eq:zdz-S-for-finite-n}
  z\partial_{z}S_{n}(z,t)=\frac{1}{n}\sum_{i=1}^{n}\frac{1}{1-f(i/n) z}+\frac{1}{n}\sum_{j=1}^{k}\frac{g(j/k)}{z+g(j/k)}-\frac{n+\lfloor nt\rfloor}{n}=0.
\end{equation}
Equation~\eqref{eq:zdz-S-for-finite-n} has at most $n+k$ real roots.
We want to consider the (real-valued) function $T_{n}(z)=\frac{1}{n}\sum_{i=1}^{n}\frac{1}{1-f(i/n) z}+\frac{1}{n}\sum_{j=1}^{k}\frac{g(j/k)}{z+g(j/k)}$. 
The function $T_{n}(z)$ has poles at $\{f(i/n)^{-1}\}_{i=1}^{n}$ and $\{-g(j/k)\}_{j=1}^{k}$.
If $z\to f(i/n)^{-1}$ from the right (resp.\ left), then $T(z)\to-\infty$ (resp.\ $T_{n}(z)\to+\infty$).
So the horizontal line $y=\lfloor nt\rfloor/n+1$ intersects the graph of $T_{n}(z)$ at least once on each interval $(f((i+1)/n)^{-1},f(i/n)^{-1})$, and similarly for the intervals $(-g((j+1)/k),-g(j/k))$.
Therefore Equation~\eqref{eq:zdz-S-for-finite-n} has at least $n+k-2$ real roots and at most one pair of complex conjugate roots. 

We now show the same pattern holds in the limit when $n,k\to\infty$.
Fix some $t \in [-1, c]$.
Set $\widehat{T}_{n}(z) = t+1 - T_{n}(z)$.
If $\widehat{T}_{n}(z)=0$ has a pair of complex conjugate roots $z_0^{(n)},\bar{z}_0^{(n)}$, we can represent it in the form
\begin{equation}
    \widehat{T}_{n}(z) = \dfrac{\displaystyle (t+1)(-1)^{n}f(1)\prod_{i=1}^{n-1}f(i/n) (z-z_i) \prod_{j=1}^{k-1} (z-w_j)(z-z_0^{(n)})(z-\bar{z}_0^{(n)})}{\displaystyle \prod_{i=1}^{n}(1-f(i/n) z)\prod_{j=1}^{k}(z+g(j/k))} ,
\end{equation}
where $z_i$ lies between $1/f(i/n)$ and $1/f(i+1/n)$, $w_j$ lies between $-g(j/k)$ and $-g((j+1)/k)$ and $\Im z_0^{(n)} > 0$. Fix $\epsilon>0$ and set $\Im z > \epsilon$, $\Im z^{-1}<-\epsilon$ . First consider the contributions with the function $g$, which is bounded, since $g$ is piecewise $\mcC^{1}$. Let us estimate the absolute value of the corresponding contribution when $k$ is large enough:
\begin{equation}
    \ln \left\lvert\prod_{j=1}^{k-1} \frac{z-w_j}{z+g(j/k)} \right\rvert =
    \sum_{j=1}^{k-1} \ln \left\lvert 1 - \frac{w_j + g(j/k)}{z+g(j/k)} \right\rvert \ge \sum_{j=1}^{k-1} \ln \left\lvert 1 - \left\lvert\frac{g((j+1)/k) - g(j/k)}{z+g(j/k)}\right\rvert \right\rvert.
  \end{equation}
Now use that $\ln(1-\abs{x})> -(1+\epsilon)\abs{x}$ for small $\abs{x}<\delta$, take $k$ such that $\left\lvert \frac{g((j+1)/k) - g(j/k)}{z+g(j/k)}\right\rvert < \delta$ and use that $\abs{z+g(j/k)}>\epsilon$ to obtain the estimate
\begin{equation}
  \ln \left|\prod_{j=1}^{k-1} \frac{z-w_j}{z+g(j/k)} \right|\ge -(1+\epsilon) \sum_{j=1}^{k-1} \epsilon^{-1}|g(j+1/k)-g(j/k)| \ge  -(\epsilon^{-1}+1) TV(g), 
\end{equation}
where $TV(g) < \infty$ is the total variation of $g$ on $[0, 1]$. 

To have a similar estimate for $f$, note that $z_{i}^{-1}\in \bigl( f(i/n),f((i+1)/n) \bigr)$, and write the contribution corresponding to $f$ as
$\left(f(1/n)\prod_{i=1}^{n-1}f((i+1)/n) z_{i}\right)\cdot\frac{\prod_{i=1}^{n-1}(z^{-1}-z_{i}^{-1})}{\prod_{i=1}^{n}(z^{-1}-f(i/n))}$.
The absolute value of first term is estimated by $f(0)$, and the second term is estimated in the same way as the contribution of the function $g$. 

Also $\widehat{T}_n(z)$ tends to zero when $n\to \infty$ for any root $z$ of \eqref{eq:zdz-S-eq-zero}. Therefore if there is such a root $z_0^{\infty}$ with $\Im z_0^{\infty} > 0$, then for any $n$ sufficiently large we should have precisely one pair of complex conjugate roots for $\widehat{T}_n(z)=0$ and $\lim_{n\to\infty} z_0^{(n)} = z_0^{\infty}$. 
Therefore $\widehat{T}_n(z)$ is uniformly separated from zero if $z$ is not sufficiently close either to the pair $\{z_0^{(n)}, \bar{z}_0^{(n)}\}$ or to the real line.

Note that this argument uses $f(s)>0$ for all $s$, but we can consider functions that go to zero as $s$ goes to zero; that is, $f(s) \xrightarrow[s\to 0]{} 0$.
Then more careful analysis is needed.
In Section~\ref{sec:monomial-functions}, we consider the power functions $f(s) = s^{m}, m\in\ZZ_{\geq0}$ and Equation~\eqref{eq:zdz-S-eq-zero} becomes transcendental, but numerically we can see that we still have at most one complex conjugate pair of roots. 

Take a support interval $[t_{-},t_{+}]\subset [-1,c]$ and denote by $z_{1}$ and $z_{2} = \bar z_{1}$ two complex conjugate roots of Equation~\eqref{eq:zdz-S-eq-zero}, then the correlation kernel converges to the discrete sine kernel $\lim_{n,k\to\infty}\mcK(nt+l,nt+l')=\frac{\sin\pi\rho(t)(l-l')}{\pi(l-l')}$ with 
\begin{equation}
  \label{eq:rho-arg-z}
  \rho(t) = \frac{1}{\pi}\arg z_{1} = \frac{1}{\pi} \arccos\frac{\Re z_{1}}{\sqrt{(\Re z_{1})^{2}+(\Im z_{1})^{2}}}
\end{equation}
for $z_{1} = \Re z_{1} + \imi\Im z_{1}$.
The support of the density $\rho$ contains the union of the support intervals.
Any support interval  $[t_{-},t_{+}]$ is determined by nonzero real roots $z_{\pm}\neq 0$ of the equation
\begin{equation}
  \label{eq:zdz-2-S-eq-zero}
  \begin{split}
    (z\partial_{z})^{2}S(z) & = \int_{0}^{1} \ds \frac{f(s)z}{1-f(s)z} +
    \int_{0}^{1} \ds \frac{(f(s)z)^{2}}{(1-f(s)z)^{2}}
    - c \int_{0}^{1} \ds \frac{g(s) z}{(z+g(s))^{2}}
    \\ & = \int_{0}^{1} \ds \frac{f(s)z}{(1-f(s)z)^2}
    - c \int_{0}^{1} \ds \frac{g(s) z}{(z+g(s))^{2}} = 0.
  \end{split}
\end{equation}
We substitute $z_{\pm}$ into Equation~\eqref{eq:zdz-S-eq-zero} and obtain $t_{\pm}$. If $t$ tends to $t_{-}$ or $t_{+}$, then the complex conjugate roots $z_{1},z_{2}$ tend to $z_{-}$ or $z_{+}$. 

When $t$ is outside of the union of the support intervals, Equation~\eqref{eq:zdz-S-eq-zero} has only real roots and in order for the formula in Theorem~\ref{thm:limit-shape} to hold we need to extend $\rho(t)$ to either be $0$ or $1$. Near a support interval $[t_{-},t_{+}]$ we choose this extension depending on the values of $z_-$ if $t < t_{-}$ and $z_+$ if $t > t_{+}$.
Specifically, we set $\rho(t) = 1$ if $z_- < 0$ (resp.\ $z_+ < 0$) and $\rho(t) = 0$ if $z_- > 0$ (resp.\ $z_+ > 0$).

To establish the pointwise convergence in probability for $F_{n}$, we denote by $N(m)=\#\{\ell \in \ket{\lambda} \mid \ell>m\}$, the number of particles lying to the right of the position $m$.
Then the expectation and variance of $N(m)$ is expressed in terms of the correlation kernel as
\begin{equation}
\EV[N(m)] = \tr_{(m,\infty)}\mcK,
\qquad\qquad
\Var[N(m)] = \tr_{(m,\infty)}(\mcK-\mcK^{2}).
\end{equation}

Since the kernel is real and symmetric, the trace of its square is non-negative $\tr \mcK^{2}\geq 0$ and $\Var[N(m)]\leq \EV[N(m)]$.
In the asymptotic regime $a_{i}=nu_{i}$, setting $\widetilde{N}(nu)=\frac{1}{n}N(nu)$ we obtain
\begin{equation}
  \Var[\widetilde{N}(nu)] = \frac{1}{n^{2}} \Var[N(nu)] \leq \frac{1}{n} \EV[\widetilde{N}(nu)].
\end{equation}
For the upper boundary of the rescaled rotated diagram, we have $F_{n}(u) = u+\frac{1}{n}\cdot \#\{\ell \in \lambda \mid \ell > nu\}$ for $u\in\frac{1}{n}(\ZZ+\frac{1}{2})$.
Since we have the limit shape
\begin{equation}
\Omega(u) = \lim_{n\to\infty} \EV[F_{n}(u)] = u + \lim_{n\to\infty}\EV[\widetilde{N}(nu)],
\end{equation}
we obtain $\Var[F_{n}(u)]\leq \frac{1}{n}\EV[\widetilde{N}(nu)] \longrightarrow 0$ as $n\to\infty$.
Hence we have pointwise convergence in probability for $F_{n}(u)$ and conclude the proof of the Theorem~\ref{thm:limit-shape}.

To prove the uniform convergence of Corollary~\ref{cor:uniform-convergence}, we note that for the bounded interval $I=[-1,c]\subset\RR$, we denote $I_{\varepsilon}=I\cap \varepsilon\ZZ$ and by $1$-Lipschitz property of $F_{n}$ we have for each $\varepsilon>0$
\begin{equation}
  \PP\left(\sup_{u\in I} \abs{F_{n}(u)-\Omega(u)} >\varepsilon\right) \leq \PP\left(\sup_{u\in I_{\varepsilon}} \abs{F_{n}(u)-\Omega(u)} > \frac{\varepsilon}{2}\right).
\end{equation}
On the right hand side the supremum is computed over the finite set so the convergence to zero at each point $u\in I_{\varepsilon}$ implies the convergence of the supremum to zero.
This leads to the convergence of the supremum norm over $I$ to zero in probability.

%%%%%%%%%%%%%%%%%%%%
\subsection{Edge boundary}
\label{sec:boundary-asymptotics}

To study the boundary asymptotics and prove Theorem \ref{thm:boundary-asymptotics}, we need to consider a critical value $t_{+}$ (or $t_{-}$) where the roots $z_{1},z_{2}$ coincide. The integration contours then look like in Fig.~\ref{fig:contour_cartoons}(center).
Denote the corresponding value of $z$ by $\zcrit$.
Then near the double critical point $\zcrit$ we have
\begin{equation}
S(z)=S(\zcrit)+\frac{1}{6}S'''(\zcrit)(z - \zcrit)^{3}+\mathcal{O}\left((z - \zcrit)^{4}\right).
\end{equation}
We assume that $S'''(\zcrit)\neq 0$, $\zcrit\neq 0$ and denote by $\sigma$ a normalization constant
\begin{equation}
  \label{eq:normalization-for-airy}
  \sigma=\left(\frac{2}{S'''(\zcrit)}\right)^{\frac{1}{3}}\frac{1}{\zcrit}.
\end{equation}
We change the variables $z,w$ to $\zeta,\nu$ such that $z = \zcrit e^{\sigma \zeta n^{-\frac{1}{3}}} \approx \zcrit (1+\sigma \zeta n^{-\frac{1}{3}}+\cdots)$, $w=\zcrit e^{\sigma \nu n^{-\frac{1}{3}}} \approx \zcrit (1+\sigma \nu n^{-\frac{1}{3}}+\cdots)$ as $n\to\infty$ and consider the asymptotic regime $m\approx t_{+}n+\xi n^{\frac{1}{3}} \sigma^{-1}$, $m'\approx t_{+}n+\eta n^{\frac{1}{3}} \sigma^{-1}$.

The correlation kernel is then expressed as
\begin{equation}
  \mcK(m,m')\approx\sigma n^{-\frac{1}{3}} \zcrit^{m'-m} \iint \frac{\dzeta \dnu}{(2\pi i)^{2}} \frac{\exp\left(\frac{\zeta^{3}}{3}-\zeta\xi\right)}{\exp\left(\frac{\nu^{3}}{3}-\nu\eta\right)}\frac{1}{\zeta-\nu},
\end{equation}
since
\begin{equation}
n\bigl( S(z)-S(w) \bigr) \approx \frac{1}{6}\sigma^{3} \zcrit^{3} S'''(\zcrit)(\zeta^{3}-\nu^{3}) = \frac{1}{6}\sigma^{3}\left(\left.(z\partial_{z})^{3}S(z)\right|_{z=\zcrit}\right)(\zeta^{3}-\nu^{3}) = \frac{1}{3}(\zeta^{3}-\nu^{3}).
\end{equation}

Therefore the correlation kernel \eqref{eq:correlation-kernel-integral-representation}  after multiplication by $n^{\frac{1}{3}}\sigma^{-1}\zcrit^{m-m'}$ converges to the Airy kernel 
\begin{equation}
  \label{eq:airy-kernel}
  \lim_{n\to\infty} n^{1/3}\sigma^{-1}\zcrit^{m-m'}\mathcal{K}(m,m')=\mathcal{K}_{\mathrm{Airy}}(\xi,\eta) = \iint \frac{\dzeta \dnu}{(2\pi i)^{2}} \frac{\exp\left(\frac{\zeta^{3}}{3}-\zeta\xi\right)}{\exp\left(\frac{\nu^{3}}{3}-\nu\eta\right)}\frac{1}{\zeta-\nu}.
\end{equation}

We are interested in the value of $(z\partial_{z})^{3}S(z)\bigr\rvert_{z=\zcrit}$, which is given by the integral
\begin{equation}
  \label{eq:zdz-3-S}
  (z\partial_{z})^{3}S(z)\bigg\vert_{z=\zcrit}=\int_{0}^{1}\ds \left(\frac{2 f^{2}(s) \zcrit^{2}}{(1-f(s) \zcrit)^{3}}+\frac{2c g(s) \zcrit^{2}}{(\zcrit+g(s))^{3}}\right). 
\end{equation}
If the integral in~\eqref{eq:zdz-3-S} is zero, the normalization constant $\sigma$ becomes infinite.
For $\sigma < \infty$, there are two cases:
\begin{enumerate}
\item we have $\lambda_{1}<k$ and then the limit shape $\Omega$ is convex near $t_{+}$;
\item we have multiple fully filled rows of length $k$ and $\Omega$ is concave near $t_{+}$.
\end{enumerate}
We conclude that in the first case the normalized fluctuations of the first row of the random Young diagram $\frac{\lambda_{1}-t_{+}n}{\sigma^{-1}n^{\frac{1}{3}}}$ are described by the Tracy--Widom distribution with $\beta=2$. 
Similarly, in the second case when $\Omega$ is concave near $t_{+}$ the same distribution describes the normalized fluctuations of the first column of the diagram $\overline{\lambda}$, the complement to $\lambda$ inside $n\times k$ rectangle.
Alternatively, we are looking at $n - \#\{i \mid \lambda_{i}=k\}$. 
The computation around the other double critical point $t_{-}$, which is measuring the fluctuations of the first column of $\lambda$ (or first row of $\overline \lambda$), is similar.

Informally, if $\sigma = \infty$, then the result corresponds to an infinitely-thin Tracy--Widom distribution; in practice, it means that the fluctuations are described another distribution.
We have two possibilities, either $S'''(\zcrit)=0$ or $\zcrit=0$.
The former leads to the Pearcey kernel~\cite{brezin1998universal,okounkov2007random,tracy2006pearcey} or to higher order Airy kernels \cite{betea2023multicritical,kimura2021universal} if higher derivatives are also zero. 
The latter corresponds to the discrete Hermite kernel defined in~\cite{borodin2007asymptotics}.
We discuss both cases in the sequel.

%%%%%%%%%%%%%%%%%%%%

\subsection{Two touching support intervals}
\label{sec:pearcey-asymptotics}

To establish Theorem \ref{thm:Pearcey_kernel}, assume that we have two support intervals $(t_-, t_d)$ and $(t_d, t_+)$ with $\zcrit$ corresponding to $t_{d}$.
Then we have $\partial_{z}^{3}S(z,t_{d})|_{z=\zcrit}=0$ as otherwise on the left (resp.\ right) of $t_{d}$, we have the convergence of the kernel to $\mathcal{K}_{\mathrm{Airy}}(\xi,\eta)$, (resp.\ $\mathcal{K}_{\mathrm{Airy}}(-\xi,-\eta)$), but the Airy kernel is not symmetric.

Next, we proceed similarly to the Airy case.
We consider the asymptotic regime $m \approx t_d n+\xi n^{\frac{1}{4}}\sigma^{-1}$, $m' \approx t_d n+\eta n^{\frac{1}{4}}\sigma^{-1}$ for some constant $\sigma$ as $n\to \infty$.
As previously discussed, the first three derivatives of the action with respect to $z$ at $t=t_{d}$ and $z=\zcrit$ are zero.
Hence, for nonzero fourth derivative $\partial_{z}^{4}S(z,t_{d})|_{z=\zcrit}\neq 0$ we set
\begin{equation}
\sigma=\left(\frac{6}{\partial_{z}^{4}S(z,t_{d})|_{z=\zcrit}}\right)^{\frac{1}{4}}\frac{1}{\zcrit}
\end{equation}
and change the variables $z=\zcrit e^{\sigma\zeta n^{-\frac{1}{4}}}\approx \zcrit(1+\sigma \zeta n^{-\frac{1}{4}}+\dots)$, $w=\zcrit e^{\sigma\nu n^{-\frac{1}{4}}}\approx \zcrit (1+\sigma \nu n^{-\frac{1}{4}}+\dots)$.
Then in our asymptotic regime the normalized correlation kernel converges to the Pearcey kernel~\eqref{eq:pearcey-kernel}
since
\begin{equation}
n(S(z)-S(w))\approx\frac{1}{24}\zcrit^{4}\sigma^{4}\left(\partial_{z}^{4}S(z,t_{d})|_{z=\zcrit}\right)(\zeta^{4}-\nu^{4})=\frac{1}{4}(\zeta^{4}-\nu^{4}).
\end{equation}

%\begin{equation}
%  \label{eq:pearcey-kernel}
%  \lim_{n,k\to\infty}n^{\frac{1}{4}} \mathcal{K}(m,m')=\mathcal{K}_{\text{Pearcey}}(\xi,\eta)= \iint\frac{\exp\left(\frac{\zeta^{4}}{4}-\zeta\xi\right)}{\exp\left(\frac{\nu^{4}}{4}-\nu\eta\right)}\frac{1}{\zeta-\nu}\frac{\dzeta \dnu}{(2\pi i)^{2}},
%\end{equation}

%%%%%%%%%%%%%%%%%%%% 
\subsection{Near the corner}
\label{sec:asymptotics-near-the-corner}

To obtain the asymptotic behavior near the corner of the rectangle (Theorem~\ref{thm:near-corner-asymptotics}), we again use the integral representation of the correlation kernel~\eqref{eq:correlation-kernel}.
In this case, there is an obstruction to our asymptotic analysis of the generic case as the double critical point $z_+$ of the action under consideration is $z_+ = 0$.
As a result, the constant $\sigma = \infty$ and the integral in~\eqref{eq:zdz-3-S} is equal to zero.
The behavior near the corner is described by the discrete Hermite kernel introduced in~\cite{borodin2007asymptotics}, as demonstrated for the constant specialization in~\cite{borodin2017asep}.
Here we derive this result from the integral representation for the correlation kernel~\eqref{eq:correlation-kernel-integral-representation} and generalize the derivation to arbitrary specializations.

Consider the asymptotic regime $n\to\infty$ with $k = cn+\frac{\widetilde{s}}{\tau}\sqrt{n} + o(1)$, where $\widetilde{s}$ is a parameter and $\tau$ is a normalization constant that we will choose later.
Take $m = \lfloor cn+\frac{\widetilde{s}}{\tau}\sqrt{n}\rfloor-l+\frac{1}{2}$ and $m' = \lfloor cn+\frac{\widetilde{s}}{\tau}\sqrt{n}\rfloor-l'+\frac{1}{2}$ so that $l,l'\in\ZZ$.
We assume that the condition given by Equation \eqref{corner-condition} is satisfied.
Alternatively, these conditions can be written as 
\begin{equation}
\lim_{n,k\to\infty}\frac{k}{n}=c,
\qquad\qquad
\sum_{i=0}^{n-1}f(i/n)=c\sum_{j=0}^{k-1}g(j/k)^{-1}+\frac{\widetilde{s}}{\tau}\sqrt{n}.
\end{equation}
Under these assumptions, we have
\begin{equation}
  \ln K(z)\approx n\left[-\int_{0}^{1}\dt\ln(1-f(t)z)-c\int_{0}^{1}\dt\ln(1+g(t)/z)+\frac{1}{\sqrt{n}}\frac{\widetilde{s}}{\tau}\ln z -\frac{1}{\sqrt{n}}\frac{\widetilde{s}}{\tau}\int_{0}^{1}\dt\ln(z+g(t))\right],
\end{equation}
where $K(z)$ is the function from~\eqref{eq:Fz-expression_ps}.
Setting $t_{+}=c$ in~\eqref{eq:action} transforms the action into $S(z)= -\int_{0}^{1}\dt\ln(1-f(t)z)-c\int_{0}^{1}\dt\ln(z+g(t))$.
Taking into account the cancellation of the terms $\frac{\widetilde{s}}{\tau} \sqrt{n} \ln z$ in $\ln K(z)$ and in $z^{-m}$, we can write the correlation kernel as
\begin{gather}
  \mcK(l,l') = \oint\oint_{\abs{w}<\abs{z}}\frac{dz}{2\pi \imi z}
    \frac{dw}{2\pi \imi w} e^{\phi(z,w)} \frac{z^{l}w^{-l'+1}}{z-w},
    \\
     \phi(z,w) := n(S(z)-S(w))-\sqrt{n}\frac{\widetilde{s}}{\tau}\left[\int_{0}^{1}\dt\ln(z+g(t))-\int_{0}^{1}\dt\ln(w+g(t))\right].
\end{gather}
We are interested in the vicinity of the critical point $z=0$ as demonstrated in Fig.~\ref{fig:contour_cartoons}(right).
Thus, we consider the change of variables $z=\frac{\tau}{\sqrt{n}}\zeta$, $w=\frac{\tau}{\sqrt{n}}\nu$.
For finite $z$ and $w$, we then can approximate the logarithms under the integral in the exponent as $\ln(z+g(t))\approx\ln(g(t))+\frac{\tau}{\sqrt{n} g(t)}\zeta+\mathcal{O}\left(\frac{1}{n}\right)$.
The first derivative of the action is $S'(0)=0$ due to condition~\eqref{corner-condition}; therefore we can use the approximation $S(z) \approx S(0)+\frac{S''(0)}{2}z^{2} = S(0)+\frac{S''(0)\tau^{2}}{2n}\zeta^{2}$.
The constant contributions are cancelled.
We now specify $\tau=\frac{1}{\sqrt{S''(0)}}$ and note that
\begin{equation}
S''(0)=\int_{0}^{1} \dt \, f^{2}(t) + c\int_{0}^{1} \dt \, g^{-2}(t) = \Abs{f}_{2}^{2}+c \Abs{g^{-1}}_{2}^{2}.
\end{equation}
Next, define $s := \widetilde{s}\int_{0}^{1}\frac{\dt}{g(t)}$, and hence the correlation kernel takes the form
\begin{equation}
  \label{eq:near_corner_kernel}
 \mcK_s(l,l') := \left(\frac{\tau}{\sqrt{n}}\right)^{l-l'}\oint\oint_{\abs{\nu}<\abs{\zeta}}\frac{\dzeta}{2\pi \imi }
    \frac{\dnu}{2\pi \imi} e^{\frac{\zeta^{2}}{2}-\zeta s+\nu s-\frac{\nu^{2}}{2}} \frac{\zeta^{l-1} \nu^{-l'}}{\zeta-\nu}.
\end{equation}
For $l\neq l'$, the integrals over $\zeta$ and $\nu$ can be decoupled by integrating by parts and using
\begin{subequations}
\begin{align}
\left( \zeta\frac{\partial}{\partial \zeta}+\nu\frac{\partial}{\partial \nu}+1\right)\zeta^{l-1}\nu^{-l'} & = (l-l')\zeta^{l-1}\nu^{-l'},
\\
\left(\zeta\frac{\partial}{\partial \zeta}+\nu\frac{\partial}{\partial \nu}+1\right)\frac{e^{\frac{\zeta^{2}}{2}-\zeta s+\nu s-\frac{\nu^{2}}{2}}}{\zeta-\nu} & = (\zeta+\nu-s)e^{\frac{\zeta^{2}}{2}-\zeta s+\nu s-\frac{\nu^{2}}{2}}.
\end{align}
\end{subequations}
Hence, the kernel can be written as
\begin{equation}
\mcK_s(l,l')=\left(\frac{\tau}{\sqrt{n}}\right)^{l-l'}\oint\oint_{\abs{\nu}<\abs{\zeta}}\frac{\dzeta}{2\pi \imi }
    \frac{\dnu}{2\pi \imi} e^{\frac{\zeta^{2}}{2}-\zeta s+\nu s-\frac{\nu^{2}}{2}} \frac{\zeta^{l-1} \nu^{-l'}(\zeta+\nu-s)}{l-l'},
\end{equation}
where the integration contour over $\nu$ can be considered as an arbitrary small counterclockwise circle around $0$, while the integration contour over $\zeta$ includes the $\nu$-contour and can be extended along the imaginary axis from $-\imi \infty$ to $\imi \infty$.
Then integral over $\nu$ is the standard contour integral representation of Hermite polynomial
\begin{equation}
\He_{l'-1}(s)=\frac{(l'-1)!}{2\pi\imi}\oint \dnu \; e^{\nu s -\frac{\nu^{2}}{2}} \nu^{-l'}.
\end{equation}
The integral over $\zeta$ can be brought to a real integral representation for the Hermite polynomials
\begin{equation}
  \label{eq:he_l-integral}
\He_{l}(s) = \frac{1}{\sqrt{2\pi}} \int_{-\infty}^{\infty} \dy \, (s+\imi y)^{l}e^{-\frac{y^{2}}{2}} = \frac{1}{\sqrt{2\pi}}\int_{-\infty}^{\infty} \mathrm{d}\widetilde{\mathrm{y}} \, (\imi \widetilde{y})^{l}e ^{-\frac{\widetilde{y}^{2}}{2}-\imi \widetilde{y}s+\frac{s^{2}}{2}}
\end{equation}
by the change of variable $\zeta=\imi \widetilde{y}$ and multiplication by $e^{\frac{s^{2}}{2}}$:
\begin{equation}
  \int_{-\imi\infty}^{\imi\infty} \dzeta \; e^{\frac{\zeta^{2}}{2}-\zeta s} \zeta^{l} = \sqrt{2\pi}e^{-\frac{s^{2}}{2}} \He_{l}(s).
\end{equation}
Using the recurrence relation for the Hermite polynomials $s \He_{l}(s)=\He_{l+1}(s)+l \He_{l-1}(s)$, we can write the kernel as %in Christoffel--Darboux form
\begin{equation}
  \label{eq:discrete-hermite-kernel}
  \mcK_s(l,l')=\left(\frac{\tau}{\sqrt{n}}\right)^{l-l'} \frac{1}{\sqrt{2\pi}} e^{-\frac{s^{2}}{2}} \frac{1}{(l'-1)!}\frac{\He_{l}(s) \He_{l'-1}(s) - \He_{l-1}(s) \He_{l'}(s)}{l-l'}.
\end{equation}
This kernel coincides with the discrete Hermite kernel introduced in~\cite{borodin2007asymptotics} as
\begin{equation}
  \mcK^{\He}_{s}(l,l')=\frac{1}{\sqrt{2\pi (l-1)! (l'-1)!}} e^{-\frac{s^{2}}{2}} \frac{\He_{l}(s) \He_{l'-1}(s) - \He_{l-1}(s) \He_{l'}(s)}{l-l'}
\end{equation}
up to a factor $\left(\frac{\tau}{\sqrt{n}}\right)^{l-l'}\sqrt{\frac{(l'-1)!}{(l-1)!}}$, which is cancelled in the determinant $\det [\delta_{i,j}-\mathcal{K}(i,j)]_{i,j}$ and therefore does not contribute to the correlation functions.

The case $l=l'$ can be obtained by taking the limit $l'\to l$ and using l'H\^opital's rule
\begin{equation}
  \label{eq:mck-herm-l-l}
  \mcK_s(l,l)= \frac{1}{\sqrt{2\pi}} e^{-\frac{s^{2}}{2}} \frac{1}{(l-1)!}\left(\left[\frac{d}{dl}\He_{l}(s)\right] \He_{l-1}(s) - \left[\frac{d}{dl}\He_{l-1}(s)\right] \He_{l}(s)\right). 
\end{equation}
As we have
\begin{equation}
\frac{e^{-\frac{s^{2}}{2}}}{\sqrt{2}}\frac{\He_{l}(s) \He_{l'-1}(s) - \He_{l-1}(s) \He_{l'}(s)}{l-l'}=\int_{s}^{\infty} \dt \; e^{-\frac{t^{2}}{2}}\He_{l-1}(t)\He_{l'-1}(t)
\end{equation}
by~\cite[Prop.~3.3]{borodin2007asymptotics}, taking the limit $l' \to l$ we see that the correlation kernel for $l=l'$ can be written in the form:
\begin{equation}
  \label{eq:hermite_kernel_diagonal}
  \mcK_s(l,l)= \frac{1}{\sqrt{2\pi}} \frac{1}{(l-1)!} \int_{s}^{\infty} \dt \; e^{-\frac{t^{2}}{2}} \He_{l-1}^{2}(t). 
\end{equation}
Then the probability of the first row to have length no more than $k-\Delta$ is given by the gap probability formula $\lim_{n,k\to\infty}\PP (\lambda_1 - k \leq -\Delta) = \det\bigl[\delta_{ij}-\mcK_s(i,j)\bigr]_{i,j=0}^{\Delta-1}$. 
In Fig.~\ref{fig:discrete-hermite-distribution}, we present the comparison of this discrete distribution for various values of $s$ to samplings by the dual Robinson--Schensted--Knuth algorithm. 

\begin{figure}
  \begin{center}
  \includegraphics[width=16cm]{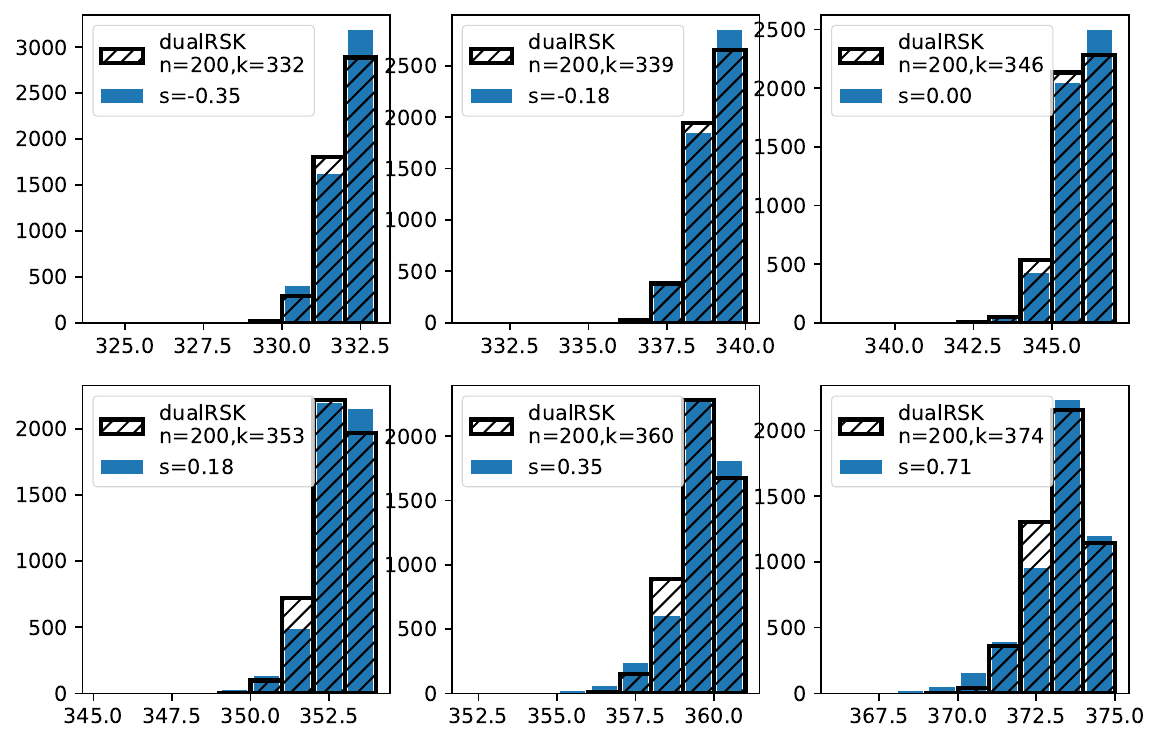}
  \end{center}
  \caption{Histograms of the distribution given by the discrete Hermite kernel for $s=-0.35, -0.18, 0, 0.18, 0.35, 0.71$ {\it (solid blue)} and distributions of first row length obtained by sampling 5000 diagrams {\it(black hatch)} for the specialization functions $f(s)=4s^{3}$, $g(s)=2+\sin(2\pi s)$ with $n=200$ and $k$ chosen to satisfy the condition \eqref{corner-condition}.}
  \label{fig:discrete-hermite-distribution}
\end{figure}

%%%%%%%%%%%%%%%%%%%%
\subsection{At the corner}
\label{sec:asymptotics-at-the-corner}

The case when the limit shape ends exactly at the corner corresponds to $s = 0$.
This was considered in~\cite{GTW01} for the constant specialization $f(s) = c$ and $g(s) = 1$, where they called this regime ``critical'' as it represented a phase transition. 
Their derivation relies on the results of Borodin and Okounkov~\cite{borodin2000fredholm} and essentially differs from the previous section only by expanding $\frac{1}{z-w}$ into series and integrating by terms instead of integration by parts.
In particular, the assumption~\eqref{corner-condition} holds automatically. 
Their result~\cite[Sec.~2.2]{GTW01} in the notation of present paper is 
\begin{equation}
  \label{eq:gap_prob_det_GTW}
  \lim_{n \to \infty} \PP (\lambda_1 - n c \leq -\Delta) = \det_{0 \leq i, j \leq \Delta-1} \bigl[ \delta_{i, j} - \mcK^{\Delta}_{\rm crit}(i,j) \bigr]
\end{equation}
with the entries of the matrix given by
\begin{equation}
  \mcK^{\Delta}_{\rm crit} (i, j) = \sum\limits_{\ell = 0}^{(\Delta - j - 1)/2}
  \begin{cases}
    \frac{1}{2 \pi} \frac{1}{\ell!} \sin \frac{\pi (j-i)}{2} \Gamma(\ell + \frac{j-i}{2}) & \text{if } \ell + \frac{j-i}{2} \notin \ZZ_{\leq 0}, \\
    \frac{1}{2}  \frac{(-1)^\ell}{\ell! (\frac{i-j}{2} - \ell)!} & \text{if } \ell + \frac{j-i}{2} \in \ZZ_{\leq 0}.
  \end{cases}
\end{equation}
We wish to compare~\eqref{eq:gap_prob_det_GTW} with~\eqref{eq:discrete-hermite-kernel} and show the determinants give the same gap probability formula.
We will show this by essentially identifying the matrices, which we make precise as follows.

\begin{thm}
\label{thm:critical_kernels}
For all $\frac{j - i}{2} \notin \ZZ$ and $\Delta > 0$, we have
\begin{equation}
2^{(i-j)/2} \mcK^{\Delta}_{\rm crit}(\Delta - 1 - i, \Delta - 1 - j) = \mcK_0(i, j).
\end{equation}
\end{thm}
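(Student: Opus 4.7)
The plan is to reduce both sides of the claimed identity to an explicit closed form in factorials and then compare directly.

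For the right-hand side, use the standard evaluations $\He_{2m}(0) = (-1)^m (2m)!/(2^m m!)$ and $\He_{2m+1}(0) = 0$. Since $\frac{j-i}{2} \notin \ZZ$ is equivalent to $i$ and $j$ having opposite parities, exactly one of the two products $\He_i(0)\He_{j-1}(0)$ and $\He_{i-1}(0)\He_j(0)$ in the numerator of~\eqref{eq:discrete-hermite-kernel} is nonzero (the other contains two odd-indexed Hermites at $0$). Substituting the explicit values yields a single closed-form expression for $\mcK_0(i,j)$ as a ratio of factorials times a power of $2$ and a sign $(-1)^{\lfloor i/2\rfloor + \lfloor j/2\rfloor}$.

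For the left-hand side, I first carry out the substitution $(i,j)\mapsto(\Delta-1-i,\Delta-1-j)$ in the definition of $\mcK^{\Delta}_{\rm crit}$: the upper limit of summation becomes $\lfloor j/2\rfloor$, the Gamma argument becomes $\ell + (i-j)/2$, and the sine factor becomes $\sin(\pi(i-j)/2) = \pm 1$. The crucial step is to evaluate the resulting finite sum in closed form via the generalized hockey-stick identity
\begin{equation}
\sum_{\ell=0}^{q} \frac{\Gamma(\ell+a)}{\ell!} = \frac{\Gamma(q+a+1)}{a\cdot q!},
\end{equation}
which follows from $\sum_{\ell=0}^{q}\binom{\ell+a-1}{\ell}=\binom{q+a}{q}$ applied to the generalized binomial coefficients and is valid for any $a\notin\ZZ_{\le 0}$. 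This collapses the entire sum to a single value of $\Gamma$ at a half-integer of the form $m+\tfrac12$ with $m \in \{\lfloor i/2\rfloor,\lceil i/2\rceil\}$ depending on parities. Converting via $\Gamma(p+1/2) = (2p)!\sqrt{\pi}/(4^p p!)$ for $p\ge 0$ (and the reflection formula $\Gamma(z)\Gamma(1-z)=\pi/\sin(\pi z)$ in the case that the relevant argument is a negative half-integer), the left-hand side becomes an explicit expression of exactly the same shape as the right-hand side.

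The final step is a direct comparison of the two closed forms, incorporating the factor $2^{(i-j)/2}$: the $\sqrt{\pi}$ from the Gamma cancels the $\sqrt{2\pi}^{\,-1}$ from~\eqref{eq:discrete-hermite-kernel} (up to $\sqrt{2}$, which is precisely absorbed by $2^{(i-j)/2}$), and the factorial structure matches after simplifying $\Gamma(q+a+1)/q!$ to $(2m)!/(2^m m!)$-type ratios.

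The main obstacle I anticipate is the bookkeeping of signs. Four different sources contribute $(-1)$-factors: the $(-1)^m$ in $\He_{2m}(0)$, the sign of $\sin(\pi(i-j)/2)$, the sign introduced by the $\Gamma$-reflection when $(i-j)/2 < 0$, and the denominator $a = (i-j)/2$ of the hockey-stick formula. Showing that these combine uniformly across both parity sub-cases (namely $i$ even with $j$ odd, and $i$ odd with $j$ even) without ad hoc case splitting is the subtle point; the uniform treatment should come from noticing that the product $\sin(\pi a)\,\Gamma(a)=\pi/\Gamma(1-a)$ rewrites the negative-half-integer Gammas simultaneously with the sine, leaving behind only unambiguous positive factorials.
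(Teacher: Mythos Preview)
Your approach is correct and genuinely different from the paper's. Both proofs begin the same way---evaluating $\mcK_0(i,j)$ via $\He_m(0)$ and rewriting $\mcK^{\Delta}_{\rm crit}(\Delta-1-i,\Delta-1-j)$ as $\frac{1}{2\pi}\sin\frac{\pi(i-j)}{2}\sum_{\ell=0}^{\lfloor j/2\rfloor}\Gamma(\ell+\tfrac{i-j}{2})/\ell!$---but then diverge. The paper converts $\Gamma$ at half-integers into double factorials and is left with the identities~\eqref{eq:pos_df_identities} and~\eqref{eq:neg_df_identities}, which it proves by a simultaneous induction $(i,j)\mapsto(i+2,j+2)$; the $i<j$ case (Lemma~\ref{lemma:neg_df_id}) further requires an ad hoc base-case argument using ``double factorial binomials'' $\bigl(\!\binom{N}{K}\!\bigr)$ and a telescoping recurrence. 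Your hockey-stick identity $\sum_{\ell=0}^{q}\Gamma(\ell+a)/\ell!=\Gamma(q+a+1)/(a\,q!)$ collapses the sum in a single step and treats $i>j$ and $i<j$ uniformly, yielding a considerably shorter argument.

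Two remarks that will streamline your write-up. First, your concern about the reflection formula is unnecessary: after the hockey-stick step the Gamma argument is $\lfloor j/2\rfloor+\tfrac{i-j}{2}+1$, which equals $\tfrac{i+2}{2}$ when $j$ is even and $\tfrac{i+1}{2}$ when $j$ is odd; for $i\ge 0$ this is always a positive half-integer, so $\Gamma(p+\tfrac12)=(2p)!\sqrt{\pi}/(4^p p!)$ applies directly in every case. Second, the sign bookkeeping you flag as the main obstacle is in fact benign once you note that $\sin\frac{\pi(i-j)}{2}=(-1)^{\lfloor i/2\rfloor+\lfloor j/2\rfloor}$ whenever $i,j$ have opposite parity, matching the sign $(-1)^{(j-i+1)/2}$ appearing in $\mcK_0(i,j)$; the remaining factorial comparison reduces to $(2p)!/(2^p p!)=(2p-1)!!$ and $2^q q!=(2q)!!$, with the leftover $\sqrt{2}$ absorbed exactly by $2^{(i-j)/2}$ as you anticipate.
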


We remark that Theorem~\ref{thm:critical_kernels} implies the determinants are equal as the factor $2^{(i-j)/2}$ will not contribute to the determinant and we will show the diagonal entries of both matrices are all $\frac{1}{2}$.

\begin{proof}
We begin by analyzing the kernel $\mcK^{\Delta}_{\rm crit}$ in~\eqref{eq:gap_prob_det_GTW}.
If $\frac{j-i}{2}\in\ZZ_{>0}$, then we clearly get $\mcK^{\Delta}_{\rm crit}(i,j) = 0$.
For the diagonal entries $i = j$, the only term that is nonzero is the $\ell = 0$ term, and so $\mcK^{\Delta}_{\rm crit} (i, j) = 1/2$, which also equals the diagonal entries of the matrix $[\delta_{ij} - \mcK_{\rm crit}^{\Delta}(i,j)]_{i,j}$.
Next we consider when $\frac{j-i}{2} \in \ZZ_{<0}$, and for simplicity we define $A := \frac{i-j}{2}$.
By the binomial theorem, we have
\begin{equation}
  \label{eq:Kcrit_neg_int_zero}
  \mcK^{\Delta}_{\rm crit} (i, j) = \frac{1}{2} \sum_{\ell=0}^A \frac{(-1)^\ell}{\ell! (A - \ell)!} = \frac{1}{2A!} \sum_{\ell=0}^A \frac{(-1)^\ell A!}{\ell! (A - \ell)!} = \frac{(1 + (-1))^A}{2A!} = 0.
\end{equation}
Next for $B := \frac{j-i}{2} \notin \ZZ$, set $B' := B - \frac{1}{2} = \frac{j-i-1}{2}$, and by well-known properties of the Gamma function, we have
\begin{equation}
\frac{\sin(\pi B) \Gamma(\ell+B)}{\pi \ell!} = \frac{(-1)^{B'}}{\sqrt{\pi} \ell!} \begin{cases}
  \frac{(2(\ell+B)-2)!!}{2^{\ell+B'}} & \text{if } \ell + B > 0, \\
  \frac{(-2)^{-\ell-B'}}{(-2(\ell+B))!!} & \text{if } \ell + B < 0,
\end{cases}
\end{equation}
where the double factorial is defined as $n!! := \prod_{k=0}^{\lfloor n/2 \rfloor} (n - 2k)$ and by convention $(-1)!! = (0)!! = 1$.

On the other hand, we want to examine the (modified) discrete Hermite kernel~\eqref{eq:discrete-hermite-kernel}.
Note that we shift the indices to $[0, \Delta - 1]$ instead of in $[1, \Delta]$ to more closely match the above determinant formula.
We first simplify the diagonal entries~\eqref{eq:hermite_kernel_diagonal} (with replacing $i \mapsto i+1$ for the change in indexing convention), which when $s = 0$ becomes
\begin{equation}
\mcK_0(i, i) = \frac{1}{\sqrt{2\pi}} \frac{1}{i!} \cdot \frac{\inner{\He_i(t)}{\He_i(t)}_{\He}}{2} = \frac{1}{\sqrt{2\pi}} \frac{1}{i!} \cdot \frac{i! \sqrt{2\pi}}{2} = \frac{1}{2},
\end{equation}
where $\inner{\cdot}{\cdot}_{\He}$ denotes the inner product in which the Hermite polynomials are orthogonal.
Now we assume $i \neq j$, and so we want to compute
\begin{equation}
\mcK_0(i, j) = \frac{1}{\sqrt{2 \pi} j!}\frac{\He_{i+1}(0) \He_j(0) - \He_i(0) \He_{j+1}(0)}{i-j},
\end{equation}
When $i$ and $j$ have the same parity, that is $\frac{j - i}{2} \in \ZZ$, then $\mcK_0(i, j) = 0$ since $H_{\ell}(0) = 0$ when $\ell$ is odd.
Now we assume $i$ and $j$ have different parity, so $\frac{j - i}{2} \notin \ZZ$, and using $\He_{\ell}(0) = (-1)^{\ell/2}(\ell-1)!!$ for $\ell$ even, we compute
\begin{equation}
\mcK_0(i, j) = %\frac{1}{\sqrt{2}}\frac{\He_i(0) \He_{j-1}(0) - \He_{i-1}(0) \He_j(0)}{i-j},
\frac{(-1)^{(j-i+1)/2}}{\sqrt{2 \pi} j! (i-j)} \begin{cases}
  i!! (j-1)!! & \text{if $i$ is odd}, \\
  (i-1)!! j!! & \text{otherwise}.
\end{cases}
\end{equation}

Summarizing the above, remains to show for all $i$ and $j$ such that $\frac{j-i}{2} \notin \ZZ$ that
\begin{equation}
\label{eq:claim_kernel_entry_equality}
\frac{2^{(i-j-1)/2}}{2\pi \sqrt{2}} (-1)^{(i-j-1)/2} \sum_{\ell = 0}^{\lfloor j/2 \rfloor} \frac{\Gamma(\ell + \tfrac{i-j}{2})}{\ell!}
=
\frac{(-1)^{(j-i+1)/2}}{\sqrt{2 \pi} j! (i-j)} \begin{cases}
  i!! (j-1)!! & \text{if $i$ is odd}, \\
  (i-1)!! j!! & \text{otherwise}.
\end{cases}
\end{equation}
We can multiply both sides by $(-1)^{(j-i+1)/2} \sqrt{2\pi}$ and note that $(-1)^{i-j+1} = 1$ by our parity restriction.
Next we split the problem into when $i > j$ and $i < j$.
Hence, Equation~\eqref{eq:claim_kernel_entry_equality} for $i > j$ is equivalent to
\begin{equation}
\label{eq:pos_df_identities}
%\sum_{\ell = 0}^{\lfloor j/2 \rfloor} \frac{(2\ell+i-j-2)!!}{(2\ell)!!}
\sum_{\ell = 0}^{\lfloor j/2 \rfloor} \frac{(2\ell+i-j-2)!!}{2^{\ell} \ell!}
=
\frac{1}{j! (i-j)} \begin{cases}
  i!! (j-1)!! & \text{if $i$ is odd}, \\
  (i-1)!! j!! & \text{otherwise},
\end{cases}
\end{equation}
and for $i < j$, we need to split the sum into two parts, where setting $A := \frac{j-i+1}{2}$, we will show below (Lemma~\ref{lemma:neg_df_id}) that
\begin{equation}
\label{eq:neg_df_identities}
\sum_{\ell = 0}^{A-1} \frac{(-1)^{\ell+A}}{2^{\ell} \ell! (j-i-2\ell)!!}
+
\sum_{\ell=A}^{\lfloor j/2 \rfloor} \frac{(2\ell+i-j-2)!!}{2^{\ell} \ell!}
=
\frac{1}{j! (i-j)} \begin{cases}
  i!! (j-1)!! & \text{if $i$ is odd}, \\
  (i-1)!! j!! & \text{otherwise}.
\end{cases}
\end{equation}
In both~\eqref{eq:pos_df_identities} and~\eqref{eq:neg_df_identities}, the right hand side becomes
\begin{equation}
\frac{i!! (j-1)!!}{j! \, (i-j)} = \frac{i!!}{j!! \, (i-j)} \quad (i \text{ odd}),
\qquad\qquad
\frac{(i-1)!! j!!}{j! \, (i-j)} = \frac{(i-1)!!}{(j-1)!! \, (i-j)} \quad (i \text{ even}).
\end{equation}
We show the double factorial identities below, which completes the proof.
\end{proof}

\begin{prop}
For $i > j \geq 0$ with $i + j \equiv 1 \pmod{2}$, we have
\begin{subequations}
\label{eq:pos_df_identities_sub_rhs}
\begin{align}
\sum_{\ell = 0}^{j/2} \frac{(2\ell+i-j-2)!! \, j!! \, (i-j)}{(2\ell)!!} & = i!!, && \text{if $i$ odd and $j$ even}, \label{eq:odd_i_pos_df} \\
\sum_{\ell = 0}^{(j-1)/2} \frac{(2\ell+i-j-2)!! \, (j-1)!! \, (i-j)}{(2\ell)!!} & = (i-1)!!, && \text{if $i$ even and $j$ odd}. \label{eq:even_i_pos_df}
\end{align}
\end{subequations}
\end{prop}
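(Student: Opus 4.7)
My plan is to observe that the two identities~\eqref{eq:odd_i_pos_df} and~\eqref{eq:even_i_pos_df} are in fact the same one-parameter-family identity under a uniform change of variables, and then to dispatch that unified identity by a one-line induction.

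First I would set $b := \lfloor j/2 \rfloor$ and $p := (i-j-1)/2$, both nonnegative integers under the given parity and ordering hypotheses. In case~\eqref{eq:odd_i_pos_df} ($i$ odd, $j$ even), writing $j = 2b$ and $i = 2b+2p+1$, one has $j!! = (2b)!!$, $i-j = 2p+1$, $i!! = (2b+2p+1)!!$, and $(2\ell+i-j-2)!! = (2\ell+2p-1)!!$; in case~\eqref{eq:even_i_pos_df} ($i$ even, $j$ odd), writing $j = 2b+1$ and $i = 2b+2p+2$, one has $(j-1)!! = (2b)!!$, $i-j = 2p+1$, $(i-1)!! = (2b+2p+1)!!$, and the summand is identical. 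So both identities collapse into the single statement
\begin{equation}
\label{eq:unified_df_plan}
S(b,p) \; := \; \sum_{\ell=0}^{b} \frac{(2\ell+2p-1)!!}{(2\ell)!!} \; = \; \frac{(2b+2p+1)!!}{(2b)!! \, (2p+1)},
\end{equation}
using the paper's conventions $(-1)!! = 0!! = 1$.

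Next I would prove~\eqref{eq:unified_df_plan} by induction on $b$ for each fixed $p \geq 0$. The base case $b = 0$ reads $(2p-1)!! = (2p+1)!!/(2p+1)$, which is immediate from the definition. For the inductive step, the splitting $S(b,p) = S(b-1,p) + (2b+2p-1)!!/(2b)!!$, combined with the inductive hypothesis, the identity $(2b)!! = 2b \cdot (2b-2)!!$, and $(2b+2p+1)(2b+2p-1)!! = (2b+2p+1)!!$, gives
\begin{equation}
S(b,p) \; = \; \frac{(2b+2p-1)!!}{(2b)!!}\left(\frac{2b}{2p+1} + 1\right) \; = \; \frac{(2b+2p+1)!!}{(2b)!! \, (2p+1)},
\end{equation}
completing the induction and hence establishing both~\eqref{eq:odd_i_pos_df} and~\eqref{eq:even_i_pos_df}.

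I do not anticipate any genuine obstacle in this argument: after clearing denominators, the two parity cases are cosmetic rewrites of a single identity, which then telescopes under elementary induction. A slightly more conceptual but equivalent route would exploit the generating function $\sum_{\ell \geq 0} \frac{(2\ell+2p-1)!!}{(2\ell)!!}\,x^{\ell} = (2p-1)!!\,(1-x)^{-p-1/2}$ together with a partial-sum formula derived from its contiguous recurrence, but the direct induction above is the shortest path and avoids any convergence bookkeeping.
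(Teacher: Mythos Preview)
Your proof is correct and follows essentially the same route as the paper: the paper proves~\eqref{eq:odd_i_pos_df} by induction sending $(i,j) \mapsto (i+2,j+2)$, which in your parametrization is exactly the induction $b \mapsto b+1$ with $p$ fixed, and it likewise observes that~\eqref{eq:even_i_pos_df} reduces to~\eqref{eq:odd_i_pos_df} under $i \mapsto i+1$, $j \mapsto j+1$. Your unified formulation~\eqref{eq:unified_df_plan} makes this equivalence explicit from the outset and streamlines the presentation, but the underlying inductive step is the same.
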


\begin{proof}
We prove Equation~\eqref{eq:odd_i_pos_df} by using simultaneous induction on $i \mapsto i + 2$ and $j \mapsto j + 2$ by
\begin{align}
\sum_{\ell = 0}^{j/2} \frac{(2\ell+i-j-2)!! \, j!! \, (i-j)}{(2\ell)!!} & = (i-j)(i-2)!! + j\sum_{\ell = 0}^{(j-2)/2} \frac{(2\ell+i-j-2)!! \, (j-2)!! \, (i-j)}{(2\ell)!!} \nonumber\\
& = (i-j)(i-2)!! + j(i-2)!! = i!!,
\end{align}
where the base case of $j = 1$ an arbitrary $i$ has a single term with $(i -2)!! \cdot 1 \cdot i = i!!$.
Equation~\eqref{eq:even_i_pos_df} is proved analogously, or one can note that it is the same as~\eqref{eq:odd_i_pos_df} by adding $1$ to $i$ and $j$.
\end{proof}

Two noteworthy aspects of~\eqref{eq:pos_df_identities_sub_rhs} is that the right hand sides are independent of $j$ and each term is easily seen to be a positive integer since $2\ell \leq j$.
It would be interesting to have a bijective proof of these identities.

\begin{lemma}
\label{lemma:neg_df_id}
Equation~\eqref{eq:neg_df_identities} holds.
\end{lemma}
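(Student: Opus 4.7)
The approach is to recognize the first sum in~\eqref{eq:neg_df_identities} as the natural continuation of the second sum to negative odd arguments of the double factorial, thereby reducing the claim to the same inductive argument used for~\eqref{eq:pos_df_identities_sub_rhs}.

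First I would extend the double factorial to negative odd integers via the defining recursion $n!! = n \cdot (n-2)!!$ together with $(-1)!! = 1$, which gives $(-(2k+1))!! = (-1)^k/(2k-1)!!$ for all $k \geq 0$. Setting $A = (j-i+1)/2$ and $k = A - \ell$, for $0 \leq \ell \leq A - 1$ one has $2\ell + i - j - 2 = -(2k+1)$ with $k \geq 1$ and $(j - i - 2\ell)!! = (2k-1)!!$. Using $(-1)^{A-\ell} = (-1)^{A+\ell}$, a direct substitution yields
\begin{equation*}
\frac{(-1)^{\ell + A}}{2^\ell \, \ell! \, (j - i - 2\ell)!!}
=
\frac{(2\ell + i - j - 2)!!}{2^\ell \, \ell!},
\end{equation*}
so that the left-hand side of~\eqref{eq:neg_df_identities} collapses into the single sum
\begin{equation*}
L(i, j) := \sum_{\ell = 0}^{\lfloor j/2 \rfloor} \frac{(2\ell + i - j - 2)!!}{2^\ell \, \ell!},
\end{equation*}
and the claim now has exactly the shape of~\eqref{eq:pos_df_identities_sub_rhs} but in the regime $i < j$.

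Next I would establish the shift recursion by isolating the top summand $\ell = \lfloor j/2 \rfloor$: for even $j$ one obtains $L(i, j) - L(i-2, j-2) = (i-2)!!/j!!$, and for odd $j$ one obtains $L(i, j) - L(i-2, j-2) = (i-3)!!/(j-1)!!$. A short algebraic check shows that the conjectural right-hand sides $i!!/(j!!(i-j))$ (odd $i$, even $j$) and $(i-1)!!/((j-1)!!(i-j))$ (even $i$, odd $j$) satisfy exactly the same recursions. It then suffices to verify the base cases $j \in \{0, 1\}$, where $L$ collapses to its $\ell = 0$ term: $L(i, 0) = (i-2)!!$ matches $i!!/i$, and $L(i, 1) = (i-3)!!$ matches $(i-1)!!/(i-1)$, both of which are immediate from the extended relation $n!! = n \cdot (n-2)!!$. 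Forward induction $(i, j) \mapsto (i+2, j+2)$, which preserves both $A$ and the parity of $i$, then covers every $(i, j)$ with $i < j$ and $i + j$ odd.

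The main technical point is the sign bookkeeping in the first step: verifying that the signs $(-1)^{\ell + A}$ appearing in the original first sum of~\eqref{eq:neg_df_identities} exactly reproduce the sign pattern $(-(2k+1))!! = (-1)^k/(2k-1)!!$ of the extended double factorial. Once this calibration is in place, the remainder of the argument is parallel to the proof of the previous proposition.
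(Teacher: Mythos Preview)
Your proof is correct and shares the same induction step $(i,j)\mapsto(i+2,j+2)$ with the paper, but the two arguments diverge in how they are anchored and in how the two sums on the left of~\eqref{eq:neg_df_identities} are treated. You first observe that, under the standard extension $(-(2k+1))!!=(-1)^k/(2k-1)!!$, the ``negative'' sum is nothing but the continuation of the second sum to indices $\ell<A$; this collapses the claim into exactly the same single-sum identity as~\eqref{eq:pos_df_identities}, only with $i<j$. You then take base cases at $j\in\{0,1\}$, which are one-term sums and are verified instantly via $n!!=n\,(n-2)!!$ (the needed $i$-values are nonpositive, but the extended double factorial handles them). By contrast, the paper keeps the two sums separate, takes its base cases at $i\in\{0,1\}$ with $j$ arbitrary, and then must prove those base cases by a further induction on $J=j/2$, introducing the double-factorial binomials $\bigl(\!\binom{N}{K}\!\bigr)$ and a telescoping argument. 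Your route is more economical: the unification step makes the structure transparent and reduces the base case to a triviality, at the cost of allowing formally negative $i$ in the intermediate steps; the paper stays within nonnegative indices but pays for it with a second layer of induction.
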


\begin{proof}
The proof for the induction step is analogous to the proof of~\eqref{eq:pos_df_identities_sub_rhs} as $A$ and the negative sum does not change.
Therefore, we only need to show the base cases.
We show the case for $i = 1$ and arbitrary (even) $j$ as the other case is similar.
Hence, setting $J := j/2 \in \ZZ$, Equation~\eqref{eq:neg_df_identities} becomes
\begin{equation}
\label{eq:df_alt_base_case}
\sum_{\ell = 0}^{J-1} \frac{(-1)^{\ell+J}}{2^{\ell} \ell! (2J-2\ell-1)!!}
+
\sum_{\ell=J}^J \frac{(2\ell-2J-1)!!}{2^{\ell} \ell!}
=
\frac{1}{(2J)!! \, (1-2J)}
\end{equation}
Next, we rewrite~\eqref{eq:df_alt_base_case} by multiplying both sides by $(2J-1)!!$ and bringing the $\ell = J$ term to the right hand side to the equivalent identity
\begin{equation}
\label{eq:df_alt_new_base}
\sum_{\ell = 0}^{J-1} \frac{(-1)^{\ell+J}}{(2\ell)!! (2(J-\ell)-1)!!} = -\frac{(2J-3)!!}{(2J-2)!!},
\end{equation}
since $2^J J! = (2J)!!$.

\begin{table}
\[
\begin{array}{cccccccccccccccccccc}
&&&&&&&&&&1\\
&&&&&&&&&1 && 1\\
&&&&&&&&1 && 2 && 1\\
&&&&&&&1 && \frac{3}{2} && \frac{3}{2} && 1\\
&&&&&&1 && \frac{8}{3} && 2 && \frac{8}{3} && 1\\
&&&&&1 && \frac{15}{8} && \frac{5}{2} && \frac{5}{2} && \frac{15}{8} && 1\\
&&&&1 && \frac{16}{5} && 3 && \frac{16}{3} && 3 && \frac{16}{5} && 1\\
&&&1 && \frac{35}{16} && \frac{7}{2} && \frac{35}{8} && \frac{35}{8} && \frac{7}{2} && \frac{35}{16} && 1\\
&&1 && \frac{128}{35} && 4 && \frac{128}{15} && 6 && \frac{128}{15} && 4 && \frac{128}{35} && 1\\
&1 && \frac{315}{128} && \frac{9}{2} && \frac{105}{16} && \frac{63}{8} && \frac{63}{8} && \frac{105}{16} && \frac{9}{2} && \frac{315}{128} && 1
\end{array}
\]
\caption{The first 10 rows of the double factorial binomial coefficients $\bigl(\!\binom{N}{K}\!\bigr)$.}
\label{table:df_binom}
\end{table}

It is convenient to introduce double factorial analogs of binomial coefficients,
\begin{equation}
    \dfbinom{N}{K} := \frac{N!!}{K!! (N-K)!!}
\end{equation}
with the convention that $\bigl(\!\binom{N}{K}\!\bigr) = 0$ for all $K < -1$ and $K > N + 1$.
In general, these are not integers, and we give the first few rows in the triangle in Table~\ref{table:df_binom}.
Hence~\eqref{eq:df_alt_new_base} in our new notation becomes
\begin{equation}
\sum_{\ell = 0}^{J-1} (-1)^{\ell+J} \dfbinom{2J-1}{2\ell} = -\frac{(2J-3)!!}{(2J-2)!!}.
\end{equation}
One can immediately verify that an analog of the usual recurrence relation for binomial coefficients holds for all $N > 0$ and $0 \leq K \leq N$:
\begin{equation}
    \dfbinom{N}{K} = \dfbinom{N-2}{K-2} + \dfbinom{N-2}{K}.
\end{equation}
Therefore, by induction on $J$ we have
\begin{align}
\sum_{\ell = 0}^{J-1} (-1)^{\ell+J} \dfbinom{2J-1}{2\ell} & = 
% (-1)^J \dfbinom{2J-1}{0} + \sum_{\ell = 1}^{J-2} (-1)^{\ell+J} \left( \dfbinom{2J-3}{2(\ell-1)} + \dfbinom{2J-3}{2\ell} \right) - \dfbinom{2J-1}{2J-2} \nonumber \\
(-1)^J + \sum_{\ell = 1}^{J-2} (-1)^{\ell+J} \left[ \dfbinom{2J-3}{2(\ell-1)} + \dfbinom{2J-3}{2\ell} \right] - \dfbinom{2J-1}{2J-2} \nonumber \\
& = (-1)^J + (-1)^{J+1} + \dfbinom{2J-3}{2J-4} - \dfbinom{2J-1}{2J-2} \nonumber \\
& = \frac{(2J-3)!!}{(2J-4)!!} - \frac{(2J-1)!!}{(2J-2)!!} = -\frac{(2J-3)!!}{(2J-2)!!},
\end{align}
where the second equality is by telescoping and the base case of $J = 1$ is simply $-1 = -1$.
\end{proof}

\section{Examples}
\label{sec:examples}

In this section, we present a number of specific examples of our results.

\begin{remark}
\label{rem:rescaling}
The measure~\eqref{eq:GL-probability-measure} is invariant under the (simultaneous) rescaling $x_i \mapsto x_i/\beta$, $y_j \mapsto \beta y_j$ for any fixed $\beta \in \RR_{>0}$.
In particular, if $y_1 > 0$, we can normalize our measure so that $y_1 = 1$.
\end{remark}

%%%%%%%%%%%%%%%%%%%%
\subsection{Constant functions and Krawtchouk polynomials}
\label{sec:const-funct-krawtch}

As the first example, we consider $x_i = \alpha$ and $y_j = 1$ for some fixed $\alpha \in \RR_{>0}$.
This case was studied in~\cite{GTW01} using saddle point analysis.
Then the measure~\eqref{eq:GL-probability-measure} becomes
\begin{equation}
  \label{eq:alpha-dim-measure}
  \mu_{n,k}(\lambda | \alpha)=\frac{\alpha^{\abs{\lambda}}
    \dim V_{\GL_{n}}(\lambda)\dim V_{\GL_{k}}(\lambda')}{(1 + \alpha)^{nk}},
\end{equation}
Using the Weyl dimension formula, we can write the measure as
\begin{equation}
  \label{eq:krawtchouk-measure}
  \mu_{n,k}(\lambda|\alpha)=\frac{1}{Z_{n,k}(\alpha)}\prod_{i<j}(a_{i}-a_{j})^{2}\prod_{l=1}^{n}\binom{n+k-1}{a_{l}}\alpha^{a_{l}},
\end{equation}
where $Z_{n,k}(\alpha) = (1+\alpha)^{nk}$.
Setting $\alpha = \frac{p}{1-p}$, we obtain the weight for the Krawtchouk polynomials: $W_{n,k}(b) = \sqrt[n]{Z_{n,k}(\alpha)^{-1}} \binom{n+k-1}{b} p^b (1-p)^{n+k-1-b}$. By Remark~\ref{rem:rescaling}, if we take $y_{j}=\beta$ then rescale $\alpha \mapsto \alpha/\beta$, we arrive again at the measure~\eqref{eq:alpha-dim-measure}.

Next, under this specialization Equation~\eqref{eq:zdz-S-eq-zero} becomes
\begin{equation}
  \label{eq:zdz-S-eq-zero-scaled-plancherel}
  \frac{\alpha z}{1 - \alpha z} + \frac{c}{z + 1} - t = 0, 
\end{equation}
and Equation~\eqref{eq:zdz-2-S-eq-zero} evaluates as
\begin{equation}
  \label{eq:zdz-2-S-eq-zero-scaled-plancherel}
  \frac{\alpha z}{(1 - \alpha z)^{2}} - \frac{c z}{(z + 1)^2} = 0. 
\end{equation}
Solving Equation~\eqref{eq:zdz-2-S-eq-zero-scaled-plancherel} for $z$, we obtain the roots
\begin{equation}
\label{eq:zpm_constant}
z_{0} = 0,
\qquad\qquad
z_{\pm} = \frac{\alpha (c+1) \pm (\alpha + 1) \sqrt{\alpha c}}{\alpha ( \alpha c - 1)}.
\end{equation}
Substituting $z_{\pm}$ into~\eqref{eq:zdz-S-eq-zero-scaled-plancherel}, we obtain
  \begin{equation}
    \label{eq:xpm-for-const}
    t_{\pm} = \frac{\alpha (c-1) \pm 2\sqrt{\alpha c}}{\alpha + 1}    
  \end{equation}
as the end points for the limit shape, and we compute
\begin{equation}
t_{+} - t_{-} = \frac{4\sqrt{\alpha c}}{\alpha + 1}.
\end{equation}
as the total length of the interval containing the limit shape.

To compute the limit shape function, we solve~\eqref{eq:zdz-S-eq-zero-scaled-plancherel} for $z$, where we generically have two roots
\begin{equation}
  \label{eq:z-for-const}
  z_{1,2} = \frac{\alpha (c-1)+t(1-\alpha) \mp \sqrt{4\alpha(t+1)(t-c)+(\alpha(c-1)+t(1-\alpha))^{2}}}{2\alpha(t+1)}.
\end{equation}
For $t \in (t_{-},t_{+})$, we have a pair of complex conjugate roots.
%Since
%\[
%\sqrt{4\alpha(x+1)(x-c)+(\alpha(c-1)+x(1-\alpha))^{2}}
%\]
%is purely imaginary, we have
Hence, we have
\begin{equation}
\imi \Im z_{1,2} = \frac{\pm \sqrt{4\alpha(t+1)(t-c)+(\alpha(c-1)+t(1-\alpha))^{2}}}{2\alpha(t+1)},
\end{equation}
and therefore
\begin{equation}
\label{eq:rho-for-alpha-dimensions}
\rho(t) = \frac{1}{\pi}\arg z
= \frac{1}{\pi}\arccos\left( \frac{\alpha (c-1) + t(1-\alpha)}{2\sqrt{\alpha(c-t)(t+1)}} \right)
\quad (t\in [t_{-},t_{+}]).
\end{equation}
The earliest paper known to the authors that contains this asymptotic result is~\cite{ismail1998strong}, where it was first obtained from the study of Krawtchouk polynomials.

Next we want to compute the edge asymptotics.
We substitute $t_{+}$ from~\eqref{eq:xpm-for-const} to~\eqref{eq:z-for-const} and obtain
\begin{equation}
  \label{eq:z-crit-boundary-for-const}
  \zcrit = \frac{\alpha(c+1)-(\alpha+1)\sqrt{\alpha c}}{\alpha (\alpha c-1)},
\end{equation}
with
\begin{equation}
  \label{eq:S-third-derivative-for-const}
  S'''(\zcrit) = -\frac{2\alpha^{2}(1+\sqrt{\alpha c})^{5}}{(\alpha+1)^{3}(\sqrt{\alpha}-\sqrt{c})\sqrt{c}}.
\end{equation}
For $\alpha, c > 0$ %% \travis{Can we take the limit as $c \to 0$ or $\alpha \to 0$? Are these meaningful?} %% I don't think so. The limit shape becomes degenerate for an empty rectangle. 
we have
\begin{equation}
  \label{eq:tracy-widom-normalization-for-const}
  \sigma=\frac{(\alpha+1)c^{\frac{1}{6}}}{\alpha^{\frac{1}{6}}(\sqrt{c}-\sqrt{\alpha})^{\frac{2}{3}}(1+\sqrt{\alpha c})^{\frac{2}{3}}}. 
\end{equation}
Furthermore, we note that $\rho(t_+) = \frac{1}{\pi} \arccos\left(\frac{\sqrt{c}-\sqrt{\alpha}}{\abs{\sqrt{c}-\sqrt{\alpha}}}\right)$, and hence $\Omega'(t_+) = \frac{c-\alpha}{\abs{c-\alpha}}$ by~\eqref{eq:limit-shape-as-integral}.
From this, it is easy to see that the limit shape ends on the upper (resp.\ lower) right boundary of the (tilted) rectangle if and only if $c > a$ (resp.\ $c < a$) if and only if it is (locally) strictly concave (resp.\ convex).
A similar analysis holds for the limit shape around $t_-$.
We give an example of the distribution of the longest row in Fig.~\ref{fig:tracy-widom-dimensions}.

\begin{figure}
  \begin{center}
  \includegraphics{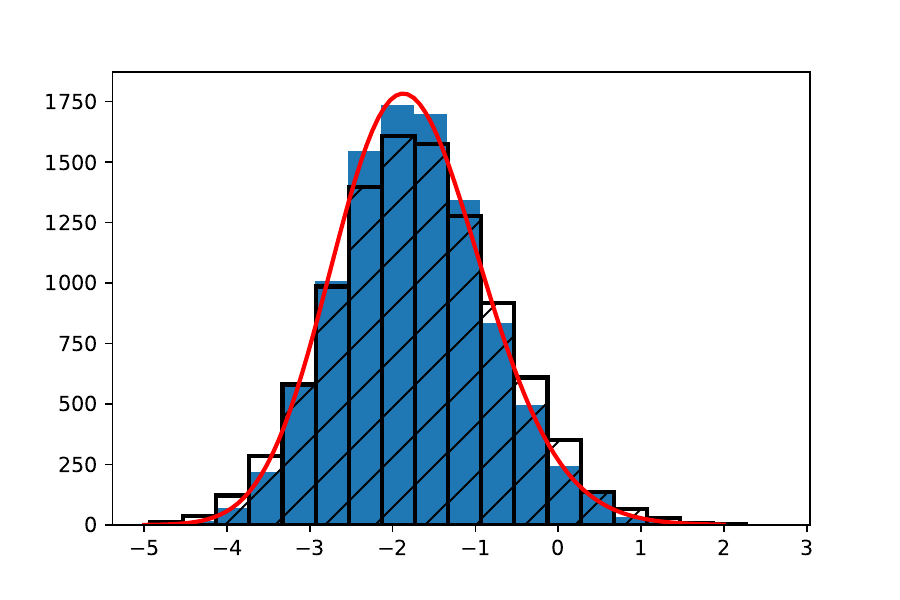}
  \end{center}
  \caption{Histogram of the normalized first row length fluctuations~\eqref{eq:tracy-widom-observable} in black for $n=200, k=400, \alpha=1$ and $10000$ samples, Tracy-Widom distribution in red, and histogram of $10000$ samples from Tracy-Widom distributions in blue.}
  \label{fig:tracy-widom-dimensions}
\end{figure}

Note that as $\alpha \to c$ then $\zcrit, z_- \to 0$, $z_+ \to \frac{2(c+1)}{c^2-1}$, $t_{+} \to c$, $\Omega'(t_+)$ jumps to $0$, and $\sigma$ diverges.
In this case, we have a different asymptotic description as the fluctuations depend on $n$ very weakly and as $n\to\infty$ are described by the discrete distribution given by~\eqref{eq:discrete_distribution}.
Hence, the probability of $\lambda_{1}=n$ is $\frac{1}{2}$, of $\lambda_{1}=n-1$ is $\frac{1}{4}+\frac{1}{2\pi}$, of $\lambda_{1}=n-2$ is $\frac{1}{8}-\frac{1}{8\pi}$ and is much smaller for smaller values of $\lambda_{1}$ (\textit{cf}.~\cite[Table~1]{GTW01}).

Now let us look at what happens if we assume $t_{+} = c$.
From~\eqref{eq:xpm-for-const}, this means that we must have $\alpha + c - 2\sqrt{\alpha c} = (\sqrt{\alpha} - \sqrt{c})^2 = 0$, which only occurs if $\alpha = c$.
On the other hand,~\eqref{eq:zpm_constant} implies we have $z_- = 0$ (recall $z_-$ dictates the value of $t_+$) if and only if $\frac{\sqrt{\alpha}}{\alpha+1} = \frac{\sqrt{c}}{c+1}$ (strictly speaking, we should assume $\alpha c - 1 \neq 0$, but it is easy to see this is a removable singularity for $z_-$).
This implies that $\alpha = c, c^{-1}$, but $\alpha = c^{-1}$ also sends the denominator to $0$.
By evaluating $\lim_{\alpha \to c^{-1}} z_- = \frac{1}{2}(c-1)$ using l'H\^opital's rule, we conclude that $z_- = 0$ if and only if $\alpha = c$ again.
Moreover, above we saw the limit shape $\Omega(t)$ is flat at $t_+$ if and only if $c = \alpha$.
Hence, we have proven Conjecture~\ref{conj:critical_classification} for this specialization.

Next, to directly compare with~\cite{GTW01}, we need the following translation of notation:
\begin{equation}
p \longleftrightarrow \frac{\alpha}{1 + \alpha},
\qquad\qquad
p_c \longleftrightarrow \frac{c}{1 + c},
\end{equation}
in particular, we can now view $\alpha$ as the transition rate yielding the probability $p$.
The critical regime of~\cite[Section 3.2]{GTW01} is when $p \to p_c$, which in our notation becomes $\alpha \to c$.
In terms of our limit shapes, the critical regime is precisely when $\lambda_1 \sim k$ but we are allowing $\lambda_2 < k$.
Thus, we are able to have some fluctuations in $\lambda_1$.
If instead we consider the deterministic regime with $p > p_c$, which in our notation is $c > \alpha$, then the right boundary of the limit shape is at $t_+ < c$ and then has $\rho(t) = 1$ for $t \in [t_+, c]$.
Therefore, we have a large number of rows of $\lambda$ equal to $k$ almost surely for $n \gg 1$; in particular, we have $\lambda _1 = k$ almost surely.

%Next, if we set $\alpha = 1$, the measure~\eqref{eq:alpha-dim-measure} becomes the measure considered in~\cite{nazarov2021skew}.
%Furthermore, we have
%\begin{equation}
%z_{\pm} = \frac{c \pm 2\sqrt{c}+1}{c-1},
%\qquad\qquad
%t_{\pm} = \frac{1-c}{2}\pm\sqrt{c},
%\end{equation}
%and the density~\eqref{eq:rho-for-alpha-dimensions} becomes
%\begin{equation}
%  \label{eq:rho-for-dimensions}
%  \rho(t)=\frac{1}{\pi}\arccos\frac{c-1}{2\sqrt{(c-t)(t+1)}} \quad (t \in [t_{-},t_{+}]),  
%\end{equation}
%which coincides with the expression in~\cite{nazarov2021skew} after a shift of coordinates and expressed using $\arccos$ instead of $\arctan$.

\subsection{Piecewise-constant specialization and multi-interval support of limit shape}
\label{sec:piec-const-spec}

For this example, we are essentially doing the estimation used in the proof of Theorem~\ref{thm:limit-shape} in Section~\ref{sec:bulk-asymptotics}, but extending it for a macroscopic share of parameters.
More precisely, we use $x_{i}=\alpha_{1}$ for $i=1,\dotsc,\lfloor A_{1}n\rfloor$, $x_{i}=\alpha_{2}$ for $i=\lceil A_{1}n\rceil,\dotsc,\lfloor (A_{1}+A_{2})n\rfloor$ and so on with constants $\alpha_{1},\dotsc, \alpha_{u}$ and shares $A_{1},\dotsc, A_{u}$, where $\sum_{i=1}^{u}A_{i}=1$.
Similarly denoting the constants for $y_{j}$ by $\beta_{1},\dotsc, \beta_{v}$  and shares by $B_{1},\dotsc,B_{v}$ such that $\sum_{j=1}^{v}B_{j}=c$, we get
\begin{equation}
  \label{eq:multi-const-action}
  S(z,t)=-\sum_{i=1}^{u} A_{i}\ln(1-\alpha_{i}z)-\sum_{j=1}^{v}B_{j}\ln(z+\beta_{j})+(c-t)\ln z.
\end{equation}
The critical points of the action are determined from Equation \eqref{eq:zdz-S-eq-zero}, which in this case takes the form
\begin{equation}
  \label{eq:zdz-S-for-multiple-constants}
  t+1=\sum_{i=1}^{u} \frac{A_{i}}{1-\alpha_{i}z}+\sum_{j=1}^{v}\frac{B_{j}\beta_{j}}{\beta_{j}+z}.
\end{equation}

To find the support of the limit density we need to solve Equation~\eqref{eq:zdz-2-S-eq-zero}, which reads
\begin{equation}
  \label{eq:multi-const-zdz-2}
  \frac{z\left(\sum_{i=1}^{u}A_{i}\alpha_{i}\prod_{j=1}^{v}(z+\beta_{j})^{2}\prod_{k\neq i}(1-\alpha_{k}z)^{2}-\sum_{j=1}^{v}B_{j}\beta_{j}\prod_{i=1}^{u}(1-\alpha_{i}z)^{2}\prod_{k\neq j}(z+\beta_{k})^{2}\right)}{\prod_{i=1}^{u}(1-\alpha_{i}z)^{2}\prod_{j=1}^{v}(z+\beta_{j})^{2}}=0,
\end{equation}
for real roots $z$ in the interval $[-1,c]$.
As we have already seen in Section~\ref{sec:const-funct-krawtch}, for $u=v=1$ we get the quadratic equation in the parentheses if $\alpha\beta c\neq 1$ (and without loss of generality by Remark~\ref{rem:rescaling}, we can take $\beta = 1$).
The case $\alpha\beta c=1$ corresponds to one of the roots being $z=0$ and the support of the density ending in the corner of the rectangle, as discussed above.
For $u,v>1$ this is a difficult problem in general, so we discuss only fourth order equation here.

As such, we take $u=2, v=1$ or $u=1, v=2$ to obtain~\eqref{eq:multi-const-zdz-2} as a fourth order equation (times $z$), but these two cases are equivalent under a change $n \leftrightarrow k$.
Hence, without loss of generality, take $u=1, v=2$, the fourth order equation is
\begin{equation}
\label{eq:fourth_order_piecewise}
\alpha(z+\beta_{1})^{2}(z+\beta_{2})^{2}-B_{1}\beta_{1}(1-\alpha z)^{2}(z+\beta_{2})^{2}-(c-B_{1})\beta_{2}(1-\alpha z)^{2}(z+\beta_{1})^{2} = 0.
\end{equation}
The analysis in generic case involve very cumbersome expressions (but it can be computed explicitly), so we assume for simplicity that $\alpha=1$, $B=\frac{c}{2}, \beta_{1} = \beta, \beta_{2} = \frac{1}{\beta}$.
We remark that the latter choice corresponds to the skew Howe duality for symplectic or orthogonal groups, which we discuss in more detail in Section~\ref{sec:gl=2sp} below.
The condition for a two interval support then corresponds to the condition of symplectic Young diagram to start away from the corner of the corresponding rectangle.
Equation~\eqref{eq:fourth_order_piecewise} can have four real roots only if discriminant is positive, which reads
\begin{equation}
\frac{1}{\beta^{10}}(\beta-1)^2 (\beta+1)^{12} c^2 \left((\beta-1)^2-4 \beta c\right) (\beta (2 \beta+c-4)+2)^2 > 0.
\end{equation}
This holds for  $\beta>2c+1+2\sqrt{c(c+1)}$ and for $\beta<2c+1-2\sqrt{c(c+1)}$. For the roots to be real we also need
\begin{multline}
  P = -16 c \beta ^7-4 \left(-c^2+12 c+4\right) \beta ^6-4 \left(4 c^2+4 c\right) \beta ^5-4 \left(10 c^2-8c-8\right) \beta ^4 \\
  -4 \left(4 c^2+4 c\right) \beta^3-4 \left(-c^2+12 c+4\right) \beta ^2-16 c \beta<0
\end{multline}
and
\begin{multline}
  D=-\beta ^2 16 c \left(4 c \beta^{12}+ (32 c+16) \beta^{11}+ \left(-c^3+8 c^2+48 c+32\right) \beta ^{10} \right.\\\left.
   +\left(-8 c^3+64 c^2-16\right) \beta^9+\left(-12 c^3+128 c^2-20 c-64\right) \beta ^8\right.\\\left.
   +\left(8 c^3+64 c^2+32 c\right) \beta^7+\left(26 c^3-16 c^2+64 c+64\right) \beta^6\right. \\\left.
   +\left(8 c^3+64 c^2+32 c\right) \beta^5+\left(-12 c^3+128 c^2-20 c-64\right) \beta^4\right.\\\left.
   +\left(-8 c^3+64 c^2-16\right) \beta^3+\left(-c^3+8 c^2+48 c+32\right) \beta^2+(32 c+16) \beta +4 c\right)<0.
\end{multline}
For positive $c$ it is possible to check that these polynomials do not have roots greater than $2c+1+2\sqrt{c(c+1)}$, so for all $\beta>2c+1+2\sqrt{c(c+1)}$ we have four real roots in \eqref{eq:multi-const-zdz-2} and two-interval support of the density $\rho(t)$.
The graph of the corresponding function $T(z)$ from Equation~\eqref{eq:zdz-S-for-multiple-constants} is presented in Fig.~\ref{fig:gl-multi-interval-action-derivative}. 
We present the examples of random Young diagrams with two-interval supports of the density in Fig. \ref{fig:gl-multicut-diagrams}.  

\begin{figure}
  \begin{center}
  \includegraphics[width=10cm]{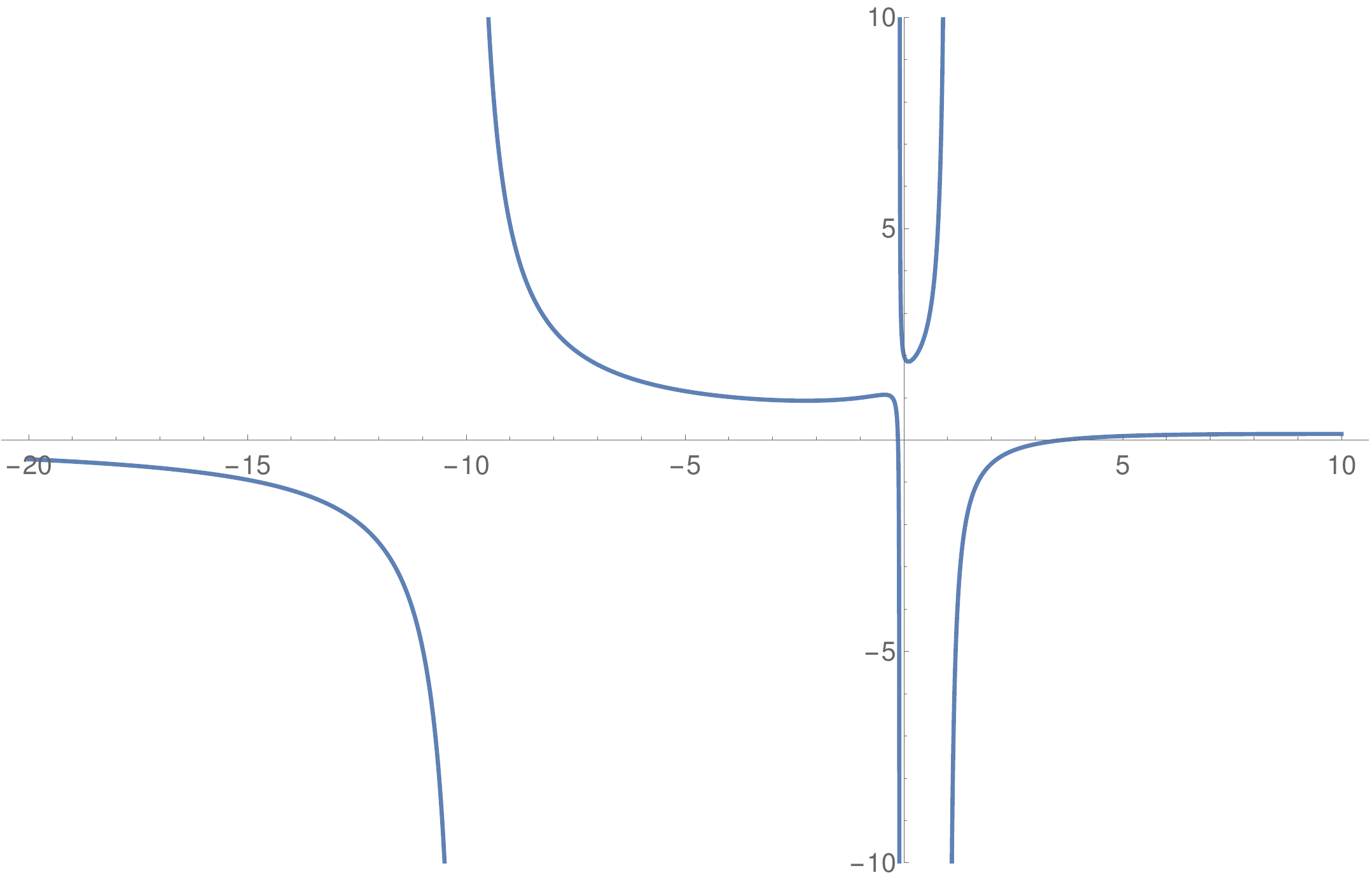}
  \end{center}
  \caption{Plot of the function $T(z)$ for $c=1$ in the two-interval case with $u=1, v=2$ $A_{1}=1$, $\alpha_{1}=1$, $B_{1}=B_{2}=c/2$, $\beta_{1}=1/10, \beta_{2}=10$. Note that horizontal line crosses the graph of $T(z)$ in either one or three points, which means that \eqref{eq:zdz-S-for-multiple-constants} has either two complex-conjugate roots or only real roots.}
  \label{fig:gl-multi-interval-action-derivative}
\end{figure}

\begin{figure}
  \begin{center}
  \includegraphics[width=10cm]{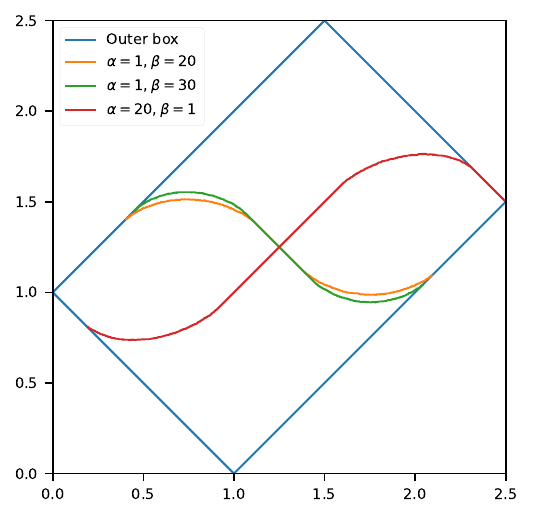}
  \end{center}
  \caption{Random Young diagrams with $n=800,k=1200$ for specializations $\{x_{i}\}_{i=1}^{n}=(\alpha,\dotsc,\alpha,\alpha^{-1},\dotsc,\alpha^{-1})$ and $\{y_{j}\}_{j=1}^{k}=(\beta,\dotsc,\beta,\beta^{-1},\dotsc,\beta^{-1})$.}
  \label{fig:gl-multicut-diagrams}
\end{figure}

For mutually inverse values of $\beta=2c+1\pm 2\sqrt{c(c+1)}$, the two intervals join in one point.
This leads to the behavior described by Pearcey kernel as per Theorem~\ref{thm:Pearcey_kernel}.
In more detail, for $\beta=2c+1+ 2\sqrt{c(c+1)}$ , Equation~\eqref{eq:fourth_order_piecewise} becomes
\begin{equation}
-2 (c+1) \left(8 c \left(c+\sqrt{c (c+1)}+1\right)+4\sqrt{c (c+1)}+1\right)(z+1)^{2}\left((2 c-1) z^2-(8 c+2) z+2 c-1\right) = 0,
\end{equation}
and we see that $z=-1$ is its multiple root.
Substituting $\beta$ and $z=-1$ to~\eqref{eq:zdz-S-eq-zero} and solving for $x$ we get $x=\frac{c-1}{2}$ as the point where two intervals of support are touching.
The first three derivatives of the action with respect to $z$ at $x=\frac{c-1}{2}$ and $z=-1$ are zero.
For the fourth derivative we have $\partial_{z}^{4}S(z,(c-1)/2)|_{z=-1}=\frac{3(c+1)}{8c}$, and similarly to the Airy kernel case we set $\sigma=-\left(\frac{6}{\partial_{z}^{4}S(z,(c-1)/2)|_{z=-1}}\right)^{\frac{1}{4}}=-2\sqrt[4]{\frac{c}{c+1}}$ and change the variables $z=-e^{\sigma\zeta n^{-\frac{1}{4}}}\approx -(1+\sigma \zeta n^{-\frac{1}{4}}+\cdots)$, $w=-e^{\sigma\nu n^{-\frac{1}{4}}}\approx -(1+\sigma \nu n^{-\frac{1}{4}}+\cdots)$.
Hence, according to Theorem~\ref{thm:Pearcey_kernel} in the asymptotic regime $m\approx \frac{c-1}{2}n+\xi n^{\frac{1}{4}}\sigma^{-1}$, $m'\approx \frac{c-1}{2}n+\eta n^{\frac{1}{4}}\sigma^{-1}$ as $n,k\to \infty$ the correlation kernel converges to the Pearcey kernel~\eqref{eq:pearcey-kernel}. 
This regime is illustrated in Fig.~\ref{fig:gl-pearcey-lozenge-tiling}, where we present a random lozenge tiling of a skew hexagon.

\begin{figure}
  \begin{center}
  \includegraphics[width=14cm]{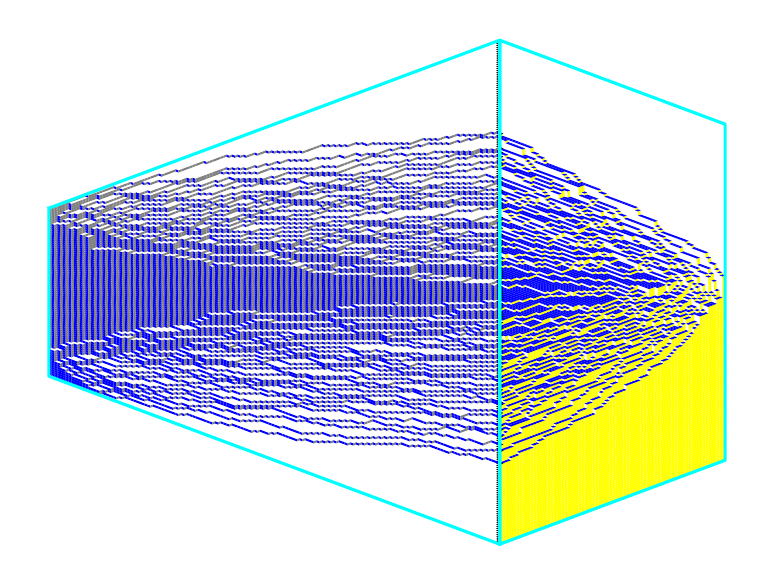}
  \end{center}
  \caption{Lozenge tiling of a skew hexagon corresponding to Pearcey regime for $n=40, k=80, c=2, \beta=2c+1+ 2\sqrt{c(c+1)}\approx 9.9$  for specializations $\{x_{i}\}_{i=1}^{2n}=(1,1\dotsc,1)$ and $\{y_{j}\}_{j=1}^{2k}=(\beta, \beta^{-1},\dotsc,\beta,\beta^{-1})$.}
  \label{fig:gl-pearcey-lozenge-tiling}
\end{figure}

%%%%%%%%%%%%%%%%%%%%
\subsection{Monomial functions}
\label{sec:monomial-functions}

We consider $f(s) = \alpha s^{\ell}$ and $g(s) = s^m$ for some fixed nonnegative real numbers $\ell, m \in \RR_{\geq 0}$. 
Then the integrals in Equation~\eqref{eq:zdz-S-eq-zero} and Equation~\eqref{eq:zdz-2-S-eq-zero} are instances of Chebyshev's differential binomial integral (see, \textit{e.g.},~\cite{Hardy71,piskunov1965differential}):
\begin{subequations}
\begin{align}
\int \ds \; s^\ta (\kappa + \nu s^{\tb})^{\tc} & = \frac{\kappa^{\tc+\frac{\ta+1}{\tb}} \nu^{-\frac{\ta+1}{\tb}}}{\tb} B\left(-\frac{\nu s^{\tb}}{\kappa}; \frac{1+\ta}{\tb}, \tc+1\right)
%\\ & = \frac{\kappa^{c+\frac{a+1}{b}} \nu^{-\frac{a+1}{b}} y^{\frac{a+1}{b}}}{1+a} {}_2F_1\left( \frac{a+1}{b}, -c; \frac{1 + a + b}{b}; -y\right),
\\ & = \frac{\kappa^\tc s^{\ta+1}}{1+\ta} {}_2F_1\left( \frac{\ta+1}{\tb}, -\tc; \frac{1 + \ta + \tb}{\tb}; - \frac{\nu s^{\tb}}{\kappa} \right),
\end{align}
\end{subequations}
%$y = \frac{\nu s^b}{\kappa}$
where $B(y; \ta, \tb) = \int_0^y \ds \; s^{\ta-1} (1-s)^{\tb-1}$ is the (lower) incomplete Beta function, and ${}_2F_1(\ta_1, \ta_2; \tb_1; x)$ is the basic hypergeometric function.
Thus, Equations~\eqref{eq:zdz-S-eq-zero} and~\eqref{eq:zdz-2-S-eq-zero} for $\ell > -1$ and $m > -1$ become, respectively,
\begin{subequations}
\label{eq:hypergeom-S-eq}
\begin{gather}
  \label{eq:zdz-S-eq-zero-hypergeom}
  \frac{\alpha z}{\ell+1} {}_2F_1\left( 1, 1 + \frac{1}{\ell}; 2 + \frac{1}{\ell}; \alpha z \right) + \frac{c}{(m+1)z} {}_2F_1\left( 1, 1 + \frac{1}{m}; 2 + \frac{1}{m}; -\frac{1}{z} \right) - t = 0,
\allowdisplaybreaks\\
  \label{eq:zdz-2-S-eq-zero-hypergeom}
  \frac{\alpha z}{\ell+1} {}_2F_1\left( 2, 1 + \frac{1}{\ell}; 2 + \frac{1}{\ell}; \alpha z \right) - \frac{c}{(m+1) z} {}_2F_1\left( 2, 1 + \frac{1}{m}; 2 + \frac{1}{m}; -\frac{1}{z} \right) = 0.
\end{gather}
\end{subequations}
%%%% Check against my code

We note that there are particular formulas for $\ell, m \in \ZZ$ that simplify~\eqref{eq:hypergeom-S-eq}, where the integrals can be computed in terms of more elementary functions.
For example, if we take $\ell = m = 1$, then
\begin{subequations}
\begin{gather}
 \label{eq:zdz-S-eq-zero-linear}
  %-\frac{1}{\alpha z}\ln(1 - \alpha z) - 1 + \frac{cz}{\beta} \ln\left(\frac{z}{\beta+z}\right) + c - x = 0, 
  -\frac{1}{\alpha z}\ln(1 - \alpha z) - 1 + cz \ln\left(\frac{z}{1+z}\right) + c - t = 0, 
\allowdisplaybreaks\\
  \label{eq:zdz-2-S-eq-zero-linear}
  %\frac{(\alpha z - 1) \ln(1 - \alpha z) - \alpha z}{\alpha z (\alpha z - 1)} + c \frac{z\left( (z+\beta) \ln\left(\frac{z}{z+\beta}\right) + \beta\right)}{\beta (z + \beta)} = 0.
  %\frac{(\alpha z - 1) \ln(1 - \alpha z) - \alpha z}{\alpha z (\alpha z - 1)} + c \frac{z\left( (z+1) \ln\left(\frac{z}{z+1}\right) + 1\right)}{z + 1} = 0.
  \frac{(\alpha z - 1) \ln(1 - \alpha z) - \alpha z}{\alpha z (\alpha z - 1)} + c \left( z\ln\left(\frac{z}{z+1}\right) + \frac{z}{z + 1} \right)= 0.
\end{gather}
\end{subequations}
However, except for $\ell = m = 0$, we are unable to solve~\eqref{eq:zdz-2-S-eq-zero-hypergeom} (for $z$) explicitly, but it is possible to compute the limit shape numerically.
In more detail, we compute the two roots $z_{\pm}$ numerically, where an example for $\ell = m = 1$ and $\alpha =1$ is illustrated in Fig.~\ref{fig:zDz-2-S}. 
Then the support of the density $[t_{-},t_{+}]$ is computed numerically by substituting the roots into Equation~\eqref{eq:zdz-S-eq-zero-linear}, and finally the limit shape can then be obtained numerically from Equation~\eqref{eq:rho-arg-z}.
Examples of random diagrams sampled using dual RSK and the limit shapes computed from the solution numerically are presented in Fig.~\ref{fig:limit-shape-monomial-weights}.

\begin{figure}
  \begin{center}
  \includegraphics[width=10cm]{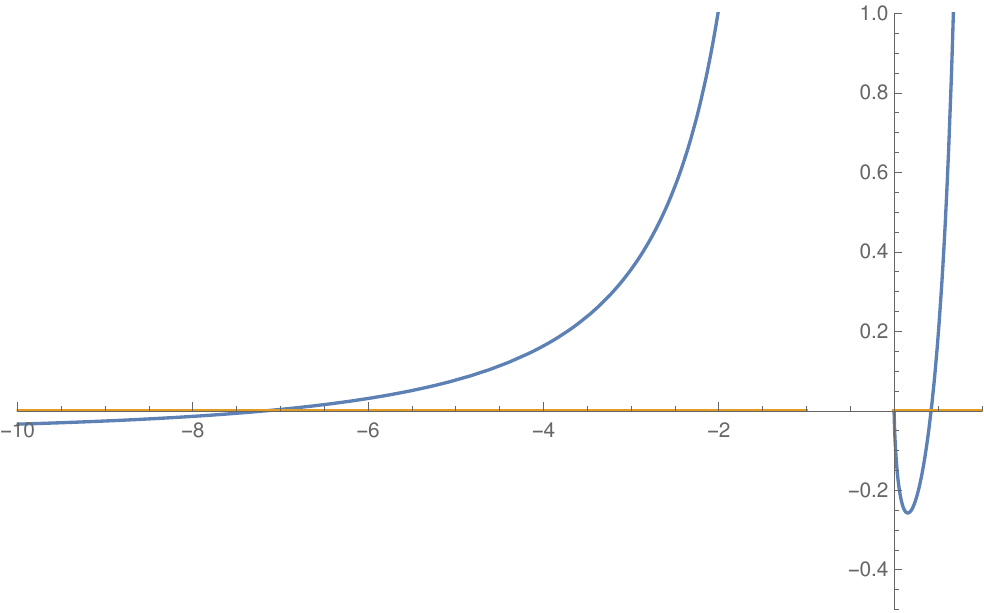}
  \end{center}
  \caption{Real (blue) and imaginary (orange) parts of
    $(z\partial_z)^2 S(z)$ as given in Equation~\eqref{eq:zdz-2-S-eq-zero-linear} for $\alpha = 1$ $c=2$.}
  \label{fig:zDz-2-S}
\end{figure}

\begin{figure}
\begin{subfigure}{0.3\textwidth}
  \[
  \vspace{1pt}
  \includegraphics[width=1.0\linewidth]{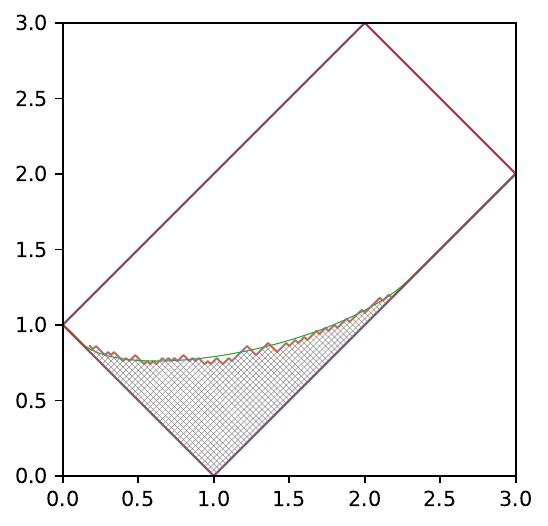}
  \]
  \caption{$\ell = m = 1$, $\alpha = 1$.}
\end{subfigure}
\begin{subfigure}{0.3\textwidth}
  \[
  \includegraphics[width=1.0\linewidth]{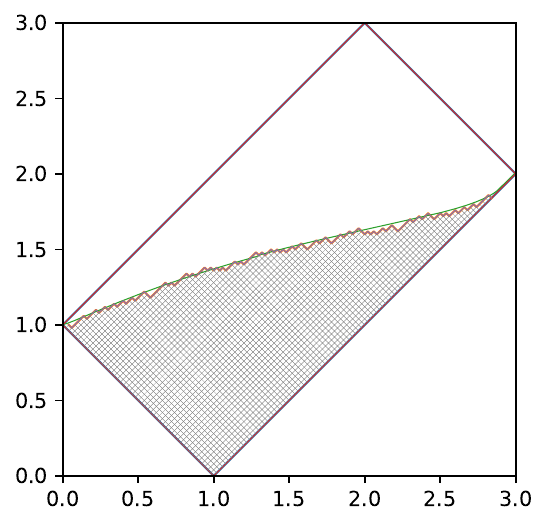}
  \]
  \caption{$\ell = m = 1$, $\alpha = 6$.}
\end{subfigure}
\begin{subfigure}{0.3\textwidth}
  \[
  \includegraphics[width=1.0\linewidth]{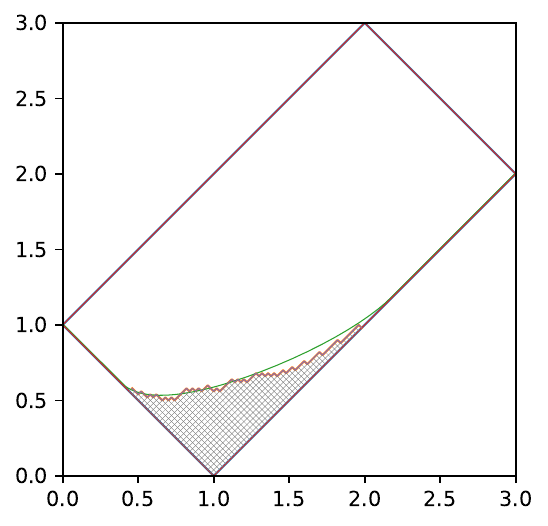}
  \]
  \caption{$\ell=3.833$, $m=2.55$, $\alpha=3$.}
\end{subfigure}
  \caption{
  Random diagram for $n=50, k=100$, $x_i=(i/n)^{\ell}, y_j=(j/k)^m$ and limit shape (green) derived by the numerical solution of Equations~\eqref{eq:hypergeom-S-eq}.
  }
  \label{fig:limit-shape-monomial-weights}
\end{figure}

%===============================================================================
\subsection{Alternating weights and symplectic Young diagrams}
\label{sec:gl=2sp}

As another example consider the groups $\GL_{2n}\times \GL_{2k}$ and interlacing
specializations 
$x_{2i-1}=f\left(\frac{i}{n}\right), x_{2i}=f\left(\frac{i}{n}\right)^{-1}$ and
$y_{2j-1}=g\left(\frac{j}{k}\right), y_{2j}=g\left(\frac{j}{k}\right)^{-1}$.
As the order of specialization parameters is not important, we can take $x_{i} = f\left(\frac{i}{n}\right)$ for $i\leq n$ and $x_{i}=f\left(\frac{i}{n}\right)^{-1}$ for $i>n$ and similarly for $y_{j}$, but the interlacing order is more natural in comparison to the symplectic groups.
This particular case is a bit more general than Theorem~\ref{thm:correlation-kernel-bulk}.
It is interesting in relation to the skew Howe duality between symplectic groups $\Sp_{2n}$ and $\Sp_{2k}$.
The exterior algebra $\bigwedge \left(\CC^{n}\otimes \CC^{2k}\right)$ admits a multiplicity-free action of the direct product $\Sp_{2n}\times\Sp_{2k}$.
Therefore we have a decomposition into the irreducible representations
\begin{equation}
  \label{eq:sp-sp-duality}
  \bigwedge \left(\CC^{n}\otimes \CC^{2k}\right)=\bigoplus_{\lambda\subseteq k^{n}} V_{\Sp_{2n}}(\lambda)\otimes V_{\Sp_{2k}}(\overline\lambda'),
\end{equation}
where $\overline\lambda'$ is a conjugate to a diagram that complements $\lambda$ inside of the $n\times k$ rectangle.
Writing this decomposition in terms of the characters, we can introduce the probability measure
\begin{equation}
  \label{eq:sp-probability-measure}
  \mu_{n,k}^{\Sp}(\lambda|x,y)=\frac{\operatorname{sp}_{\lambda}(x_{1},\dotsc,x_{n}) \operatorname{sp}_{\overline\lambda'}(y_{1},\dotsc,y_{k})}{\prod_{i=1}^{n}\prod_{j=1}^{k}(x_{i}+x_{i}^{-1}+y_{j}+y_{j}^{-1})},
\end{equation}
where we have denoted by $\operatorname{sp}_{\lambda}$ character of irreducible representation of symplectic group $\Sp_{2n}$.
On the other hand, the exterior algebra  $\bigwedge \left(\CC^{n}\otimes \CC^{2k}\right)$ can be seen as the spinor representation $\bigwedge \left(\CC^{n}\right)$ of $\Sp_{2n}$, raised to the tensor power $2k$, $\bigl(\bigwedge \CC^{n} \bigr)^{\otimes 2k}$.
This tensor power can be implemented by the Berele insertion algorithm~\cite{berele1986schensted}; see also~\cite{proctor1993reflection} for the proof of the $(\Sp_{2n}, \Sp_{2k})$ duality with the insertion algorithm.
This algorithm can be used to sample random diagrams with respect to the probability measure~\eqref{eq:sp-probability-measure}. 

In the paper~\cite{nazarov2021skew}, we have demonstrated that for the trivial specialization $\{x_{i}=1, y_{j}=1\}$, that is when the measure is given by the formula
\begin{equation}
\mu_{n,k}^{\Sp}(\lambda)=\frac{\dim V_{\Sp_{2n}}(\lambda)\dim V_{\Sp_{2k}}(\overline\lambda')}{2^{2nk}},
\end{equation}
the limit shape of the symplectic Young diagrams when $n,k\to\infty$ such that $\frac{k}{n} = c + \mathcal{O}\left(\frac{1}{n}\right)$ is half of the limit shape of Young diagrams for $\GL_{2n}\times \GL_{2k}$.

Using the Berele sampling algorithm we conjecture that the limit shape of symplectic Young diagrams with respect to the measure~\eqref{eq:sp-probability-measure} for $x_{i}=f\left(\frac{i}{n}\right)$, $y_{j}=g\left(\frac{j}{k}\right)$ is half of the limit shape of Young diagrams for $\GL_{2n}\times \GL_{2k}$, derived below.
It would be interesting to prove our conjecture by using a free fermionic (or vertex operator) construction for the symplectic characters; see, \textit{e.g.},~\cite{Baker_1996,Betea18,JLW24}, but it is beyond of scope of the present paper.
See also Question~(\ref{Q:dual_pairs}) in Section~\ref{sec:conclusion-outlook}.
See Fig.~\ref{fig:gl-sp-diagram-limit-shape} for a random symplectic diagram and the limit shape for $f(s)=e^{-\gamma s}$, $g(s)=e^{\gamma c s}$.

%%% Moreover, we have corner behavior in the center of the picture below, we should discuss it
\begin{figure}[t]
  \centering
  \includegraphics[width=.8\textwidth]{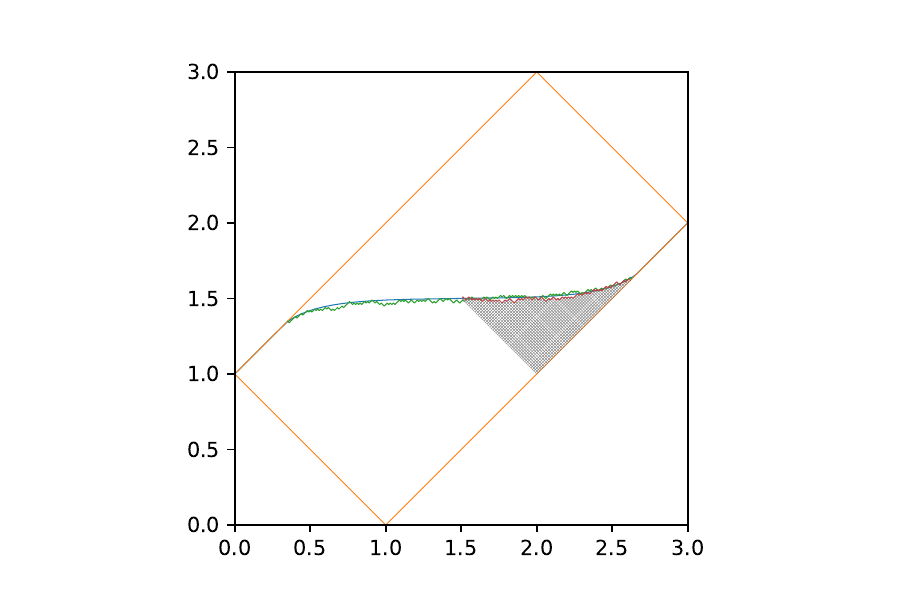}
  \caption{Random symplectic Young diagram for $n=50, k=100, \gamma=4$, its upper boundary (red line), upper boundary of random Young diagram for $\GL_{2n}\times \GL_{2k}$ (green line) and limit shape (blue line).}
  \label{fig:gl-sp-diagram-limit-shape}
\end{figure}
To derive the limit shape we first separate variables with odd and even indices:
\begin{equation}
	K(z)=\prod_{i=1}^{2n}\frac{1}{1-x_{i}z}\prod_{j=1}^{2k}\frac{1}{1+y_{j}/z}
	=\prod_{i=1}^{n}\frac{1}{1-f(i/n)z}\frac{1}{1-\frac{z}{f(i/n)}}\prod_{j=1}^{k}\frac{1}{1+\frac{g(j/k)}{z}}\frac{1}{1+\frac{1}{g(j/k)z}}. 
\end{equation}
Then for the action we have
\begin{align*}
	S(z) &= \frac{1}{2n}\ln K(z)-t\ln z \\
	& \approx \frac{1}{2} \int_{0}^{1} \ds\left[ - 
	\ln(1-f(s)z) -  \ln\left(1-\frac{z}{f(s)}\right) - c  \ln\left(1+\frac{g(s)}{z}\right)
	- c  \ln\left(1+\frac{1}{g(s)z} \right) \right] - t \ln z.
\end{align*}
Taking the derivative we get the equations 
\begin{subequations}
\begin{align}
\label{eq:zdzS-periodic}
  (z\partial_{z})S(z) &= \frac{1}{2}\int_{0}^{1} \ds\left[
    \frac{f(s)z}{1-f(s)z}+ \frac{z}{f(s)-z}- \frac{cz}{z+g(s)}-
    \frac{cg(s)z}{1+g(s)z}\right]-\left(x-c\right)=0,
\allowdisplaybreaks \\
(z\partial_{z}  )^{2}S(z) &= \int_{0}^{1}\ds \left[\frac{f(s)z}{(1-f(s)z)^{2}}+\frac{f(s)z}{(f(s)-z)^{2}}-\frac{cg(s)z}{(1+g(s)z)^{2}}-\frac{cg(s)z}{(z+g(s))^{2}}\right]=0.
\end{align}
\end{subequations}

The piecewise-constant specialization with $x_{i}=1, i=1,\dotsc, n$ and $y_{2j}=\beta, y_{2j-1}=\beta^{-1}, j=1,\dotsc,k$ for $\GL_{2n}\times \GL_{2k}$ gives $f(s)=1, g(s)=\beta$ and was considered in Section~\ref{sec:piec-const-spec}.
It corresponds to the constant specialization $x_{i}=1, y_{j}=\beta$ for symplectic groups $\Sp_{2n}\times \Sp_{2k}$.
Here single-interval support of the limit density for general linear group means that the diagram for the symplectic group starts at the left corner of the $n\times k$ rectangle.
The two-interval regime leads to the limit shape touching to the boundary of the rectangle before getting to the corner, which is at the center of diagram in Fig.~\ref{fig:gl-multicut-diagrams}.
The fluctuations in $\GL_{2n}\times \GL_{2k}$ case in this regime are described by the Pearcey kernel as demonstrated in Theorem \ref{thm:Pearcey_kernel} and discussed in Section~\ref{sec:piec-const-spec}.
The behavior in $\Sp_{2n}\times \Sp_{2k}$ is unknown, but we expect it to be described by the Pearcey kernel, symmetric Pearcey kernel~\cite{BK10}, or the related Pearcey-like kernel discussed in~\cite[Sec.~8.6]{cuenca2024symplecticschurprocess}.

Another specialization that admits the explicit solution is exponential.
In the case using the specialization $f(s)=e^{-\gamma s}$ and $g(s)=e^{c\gamma s}$, solving the first equation we get 
\begin{equation}
  z_{1,2}=\frac{-(e^{\gamma}-e^{c\gamma})(e^{c\gamma}+e^{\gamma(1+2t)})\pm\sqrt{(e^{\gamma}-e^{c\gamma})^{2}(e^{c\gamma}+e^{\gamma(1+2t)})^{2}-4e^{(c+1)\gamma}(e^{2c\gamma}-e^{2t\gamma})(e^{2\gamma(t+1)}-1)}}{2e^{c\gamma}(e^{2\gamma(t+1)}-1)}.
\end{equation}
Solving the second equation we again get the equation for the critical values of $z$:
\begin{equation}
  \frac{(e^{\gamma(c+1)}+1)z\left(\left(e^{\gamma}-e^{c\gamma}\right)z^{2}+2\left(e^{\gamma(c+1)}-1\right)z-e^{c\gamma}+e^{\gamma}\right)}{\gamma\left(e^{\gamma}-z\right)\left(e^{c\gamma}+z\right)\left(e^{\gamma}z-1\right)\left(e^{c\gamma}z+1\right)}=0.
\end{equation}
The solutions are
\begin{equation}
  z_{\pm}=\frac{e^{2\gamma(c+1)}-1\pm\sqrt{(e^{2\gamma}-1)(e^{2\gamma c}-1)(e^{\gamma(c+1)}+1)^{2}}}{-e^{\gamma}+e^{\gamma c}-e^{\gamma(c+2)}+e^{\gamma(2c+1)}}.
\end{equation}
Substituting into~\eqref{eq:zdzS-periodic} we get the support of the density:
\begin{equation}
  t_{\pm}=\frac{1}{2\gamma}\ln\left(\frac{2e^{2\gamma(c+1)}+2e^{3\gamma(c+1)}-e^{\gamma(c+3)}-e^{\gamma (3c+1)}\mp\sqrt{e^{2\gamma(c+1)}(e^{2\gamma}-1)(e^{2c\gamma}-1)(e^{\gamma(c+1)}+1)^{2}}}{(e^{\gamma(c-1)}+1)^{2}}\right).
\end{equation}
Using \eqref{eq:rho-arg-z} we get the limit density for $t\in[t_{-},t_{+}]$:
\begin{equation}
	\label{eq:sp_half_limit_density}
	\rho(t)=\frac{1}{\pi}\arccos\left(\frac{e^{2c\gamma}-e^{\gamma(c+1)}-e^{2\gamma(t+1)}+e^{\gamma(2t+c+1)}}{2\sqrt{(e^{\gamma(2c+1)}-e^{\gamma(2t+1)})(e^{\gamma(2t+c+2)}-e^{c\gamma})}}\right)
\end{equation}
An example of this limit shape is presented in Fig.~\ref{fig:gl-sp-diagram-limit-shape}.
This case can be seen as a particular case of the exponential specialization $x_{i}=\alpha e^{-\gamma\frac{i-1}{n}}$, $y_{j}=\beta e^{-\delta \frac{j-1}{k}}$ discussed in the next section.
To see that we need to rearrange the interlacing parameters $\{x_{2i-1},x_{2i}\}_{i=1}^{n}$, $\{y_{2j-1},y_{2j}\}_{j=1}^{k}$ in the increasing order to get the sequences
\begin{equation}
\{e^{-\gamma\frac{n-1}{n}},\dotsc, e^{-\gamma\frac{1}{n}},1,1,e^{\gamma \frac{1}{n}},\dotsc, e^{\gamma\frac{n-1}{n}}\},
\qquad
\{e^{-c\gamma\frac{k-1}{k}},\dotsc, e^{-c\gamma\frac{1}{k}},1,1,e^{c\gamma \frac{1}{k}},\dotsc, e^{c\gamma\frac{k-1}{k}}\}.
\end{equation}
Extracting the constants $\alpha=e^{\gamma}$, $\beta=e^{c\gamma}$ and ignoring the single duplicated value we get to the exponential specialization.

%%%%%%%%%%%%%%%%%%%%
\section{Principal specializations and \texorpdfstring{$q$}{q}-Krawtchouk ensemble}
\label{sec:princ-spec-q-krawtchouk}

Another natural choice of specialization parameters is to take $x_{i}
= \alpha q^{i-1}, y_{j} = q^{j-1}$ or $x_{i} = \alpha q^{i-1},
y_{j}=q^{1-j}$ for some positive $q$. Both of these cases can be interpreted from the point of view of $q$-Krawtchouk polynomial ensemble~\cite{nazarov2022skew} as follows.
If we substitute these specializations into the probability measure
\eqref{eq:GL-probability-measure}, as was demonstrated in~\cite{nazarov2021skew}, the measures then take the form
\begin{subequations}
\begin{align}
    \label{eq:q-krawtchouk-measure}
  \mu_{n,k}(\lambda|\alpha, q, q) & = \frac{\alpha^{\abs{\lambda}} q^{\Abs{\lambda}} \dim_{q}\left(V_{\GL_{n}}(\lambda)\right)\cdot
    q^{\Abs{\bar{\lambda}'}}\dim_{q}\left(V_{\GL_{k}}(\bar{\lambda}')\right)}
  {\prod_{i=1}^{n}\prod_{j=1}^{k}(q^{i-1}+q^{j-1})}, 
\allowdisplaybreaks \\
    \label{eq:q-krawtchouk-inv-measure}
   \mu_{n,k}(\lambda|\alpha, q,q^{-1}) & = \frac{\alpha^{\abs{\lambda}} q^{\Abs{\lambda}}\dim_{q}\left(V_{\GL_{n}}(\lambda)\right)\cdot
    q^{-\Abs{\bar{\lambda}'}}\dim_{1/q}\left(V_{\GL_{k}}(\bar{\lambda}')\right)}
  {\prod_{i=1}^{n}\prod_{j=1}^{k}(q^{i-1}+q^{1-j})},
\end{align}
\end{subequations}
for each specialization respectively, where $\Abs{\lambda} = \sum_{i=1}^{n}(i-1)\lambda_i$ and $q$-dimension of the irreducible $\GL_{n}$ representation is defined as the
principal gradation (see \cite[\S10.10]{kac90}) that is the weighted
sum of the dimensions of weight subspaces:
\begin{equation}
  \label{eq:q-dim-def}
  \dim_{q}\left(V_{\GL_{n}}(\lambda)\right)=\sum_{(u_{1},\dotsc,u_{n-1})\in
    \mathbb{Z}^{n-1}_{\geq 0}}q^{\sum_{i=1}^{n-1}u_{i}}\dim
  V(\lambda)_{\lambda-\sum_{i=1}^{n-1}u_{i}\alpha_{i}}, 
\end{equation}
where $\alpha_{1},\dotsc,\alpha_{n-1}$ are the simple roots of $\GL_{n}$ and we
identify the diagram $\lambda$ with the dominant $\GL_{n}$ weight $\lambda$ in the usual way.

We use the standard notation for (combinatorial) $q$-analogs:
\begin{equation}
[n]_q = \frac{q^n-1}{q-1} = 1 + \cdots + q^{n-1},
\qquad
[n]_q! = [1]_q \dotsm [n]_q,
\qquad
\qbinom{n}{k}{q} = \frac{[n]_q!}{[k]_q! [n-k]_q!}.
\end{equation}
Using $q$-analogues of the Weyl dimension formula, the Lindstr\"om--Gessel--Viennot lemma, and Dodgson condensation, the measure \eqref{eq:q-krawtchouk-measure} is then rewritten explicitly as
\begin{equation}
  \label{eq:q-krawtchouk-explicit}
  \mu_{n,k}(\{a_i\}| \alpha, q, q) = C^{+}_{n,k,q}\prod_{i<j}(q^{-a_i-n+\frac{1}{2}}-q^{-a_j-n+\frac{1}{2}})^2\prod_{i=1}^{n} W^{+}_{n,k}(a_i+n-1/2| \alpha),
\end{equation}
where
\begin{equation}
  \label{eq:q-krawtchouk-weight}
  W^{+}_{n,k}(a|\alpha) = \alpha^{a} q^{\binom{a}{2}+a(n-k)}\qbinom{n+k-1}{a}{q}
\end{equation}
and
\begin{equation}
  \label{eq:q-krawtchouk-const}
  C^{+}_{n,k,q} =
  \dfrac{\alpha^{-\frac{n(n-1)}{2}}q^{\frac{k n}{2}(n+k-2)}}
  {\prod_{i=1}^{n}\prod_{j=1}^{k}(q^{i-1}+q^{j-1})}
  \prod_{i=1}^{n}\dfrac{[k+i-1]_q!}
  {\left[i-1\right]_q! [n+k-1]_q!}\frac{1}{(1-q)^{\frac{n(n-1)}{2}}}.
\end{equation}
The measure \eqref{eq:q-krawtchouk-inv-measure} similarly takes the form

\begin{equation}
  \label{eq:q-krawtchouk-inv-explicit}
  \mu_{n,k}(\lambda|\alpha,q,q^{-1})=C^{-}_{n,k,q}\prod_{i<j}(q^{-a_i-n+\frac{1}{2}}-q^{-a_j-n+\frac{1}{2}})^2\prod_{i=1}^{n}W^{-}_{n,k}(a_i+n-1/2 | \alpha), 
\end{equation}
where
\begin{equation}
  \label{eq:q-krawtchouk-inv-weight}
  W^{-}_{n,k}(a | \alpha) = \alpha^{a} q^{\binom{a}{2}+a(n-1)}\qbinom{n+k-1}{a}{q}
\end{equation}
and
\begin{equation}
  \label{eq:q-krawtchouk-inv-const}
  C^{-}_{n,k,q}= 
  \dfrac{\alpha^{-\frac{n(n-1)}{2}}q^{\frac{n}{2}(n-1)(n+2k-2)}}
  {\prod_{i=1}^{n}\prod_{j=1}^{k}(q^{i-1}+q^{1-j})}
  \prod_{i=1}^{n}\dfrac{\left[k+i-1\right]_{q}!}
  {\left[i-1\right]_q! \left[n+k-1\right]_{q}!}\frac{1}{(1-q)^{\frac{n(n-1)}{2}}}.
\end{equation}

The $q$-Krawtchouk polynomials $K_{l}^{q}(q^{-a};p,N;q)$ are defined on the multiplicative lattice $\{q^{-a}\}_{a=0}^{N}$ and are orthogonal with respect to the weight $\qbinom{N}{a}{q} p^{-a} q^{\binom{a}{2}-aN}$~\cite{koekoek2010hypergeometric}.
Therefore the  weights  $W^{+}_{n,k}(a|\alpha)$ and $W^{-}_{n,k}(a|\alpha)$ are weights of $q$-Krawtchouk polynomials $K_{l}^{q}(q^{-a}; q^{1-2n}/\alpha, n+k-1; q)$ and $K_{l}^{q}(q^{-a};q^{2-2n-k}/\alpha, n+k-1; q)$.
As described in~\cite{nazarov2022skew}, $q$-difference equations for
$q$-Krawtchouk polynomials give a way to derive the limit shapes in
the regime when $n,k\to\infty$, while $q\to 1$ in such a way that
$c=\lim\frac{k}{n}$ and $q = 1-\frac{\gamma}{n}$ with $c,\gamma$ being
constants. Moreover, recurrence relations for the orthogonal
polynomials can be used to study global fluctuations around the limit
shape and to prove the convergence to the limit shape.

Below we present the derivation of the same limit shapes and study the
local fluctuations in this regime from the point of view of our
general framework.
%% There can be an arithmetic mistake 
Generalizing the problem a bit, we take $x_i = \alpha q^{i-1}$, $y_j = t^{j-1}$ and take the limits $n,k\to\infty$ and $q,t\to 1$ such that $q = 1-\frac{\gamma}{n}$ and $t = 1-\frac{\delta}{n}$.
We will assume $\gamma, \delta \neq 0$ and $\alpha > 0$.

As such, we instead set $x_{i}=\alpha e^{-\gamma\frac{i-1}{n}}$, $y_{j} = e^{-\delta\frac{j-1}{n}}$, so that we need to solve Equation~\eqref{eq:zdz-S-eq-zero} for $f(s) = \alpha e^{-\gamma s}, g(s) = e^{-\delta c s}$:
\begin{equation}
  \label{eq:zdz-S-eq-zero-principal}
  t = \frac{1}{\gamma} \ln \left(\frac{1 - \alpha e^{-\gamma} z}{1 -
      \alpha z}\right) + \frac{1}{\delta} \ln \left(\frac{z + 1}{z + e^{-\delta c}}\right).
  %\frac{1}{\gamma}\left(\ln\left(e^{\gamma}-z\right) - \ln\left(1 + \alpha e^{c\delta}z\right)+\ln(1+\alpha z)-\ln(1-z)\right),
\end{equation}
%\begin{align*}
%  x & = \int_{0}^{1} \ds \frac{e^{-\gamma s} z}{1 - e^{-\gamma s} z} + c \int_{0}^{1} \ds \frac{\alpha e^{-\delta c s}}{z+\alpha e^{-\delta c s}}
%  \\ & = \frac{1}{\gamma} \ln \abs{1 - e^{-\gamma s} z} \Bigr\rvert_{s=0}^1 - \frac{1}{\delta} \ln \abs{z + \alpha e^{-\delta c s}} \Bigr\rvert_{s=0}^1
%  \\ & = \frac{1}{\gamma} \bigl( \ln \abs{1 - e^{-\gamma} z} - \ln \abs{1 - z} \bigr) - \frac{1}{\delta} \bigl( \ln \abs{z + \alpha e^{-\delta c}} - \ln \abs{z + \alpha} \bigr)
%  \\ & = \frac{1}{\gamma} \ln \absval{\frac{1 - e^{-\gamma} z}{1 - z}} + \frac{1}{\delta} \ln \absval{\frac{z + \alpha}{z + \alpha e^{-\delta c}}}
%\end{align*}
We can exponentiate~\eqref{eq:zdz-S-eq-zero-principal} to get
\begin{equation}
\label{eq:zdz-S-biprinicipal-roots}
e^{\gamma t} = \frac{1 - \alpha e^{-\gamma}z}{1-\alpha z} \cdot \left( \frac{z+1}{z+ e^{-\delta c}} \right)^{\gamma/\delta},
\end{equation}
but we should check that its roots satisfy \eqref{eq:zdz-S-eq-zero-principal}. 
For this specialization, Equation~\eqref{eq:zdz-2-S-eq-zero} becomes
\begin{equation}
\label{eq:zdz-2-exp}
0 = \frac{(A z^2 + B z + C ) z}{(z +  e^{-c \delta}) (1 + z)
  (e^{\gamma} - \alpha z) (\alpha z - 1)}
\end{equation}
with
\begin{subequations}
%% \begin{align}
%% \gamma \delta A & = \alpha \gamma e^{-c\delta} + \delta e^{\gamma} - (\alpha \gamma + \delta)
%% \\ \gamma \delta B & = \alpha (\gamma - \delta + (\delta + \gamma) e^{\gamma} - (\gamma - \delta) e^{-c \delta+\gamma} - (\gamma + \delta) e^{-c\delta})
%% \\ \gamma \delta C & = \alpha (-\gamma (1 - e^{-c \delta}) e^{\gamma} + \alpha \delta (e^{\gamma} - 1) e^{-c \delta}).
%% \end{align}
\begin{align}
\gamma \delta A & = \alpha \delta(1 - e^{\gamma})+\alpha^{2} \gamma (1-e^{-c\delta}),
\\ \gamma \delta B & =
                     \alpha\delta(1-e^{\gamma}+e^{-c\delta}-e^{c\delta+\gamma})+\alpha\gamma
                     (e^{-c\delta}-1-e^{\gamma}+e^{-c\delta+\gamma}),
\\ \gamma \delta C & = \gamma (1 - e^{-c \delta}) e^{\gamma} + \alpha \delta (1-e^{\gamma} ) e^{-c \delta}).
\end{align}
\end{subequations}
Hence, the roots of~\eqref{eq:zdz-2-exp} are given by
\begin{equation}
\label{eq:zdz-2-exp-roots}
z_0 = 0,
\qquad\qquad
z_{\pm} = \frac{-B \pm \sqrt{B^2 - 4AC}}{2A}.
\end{equation}
Hence, we can compute our endpoints $t_{\pm}$ by substituting $z_{\pm}$ into~\eqref{eq:zdz-S-eq-zero-principal}.

Next, we examine~\eqref{eq:zdz-S-biprinicipal-roots} in more detail.
We see that if $\gamma / \delta \in \ZZ$, then Equation~\eqref{eq:zdz-S-biprinicipal-roots} becomes a polynomial equation in $z$.
Now if we take $\gamma = \delta$, then for real $t$ we have
\begin{equation}
t = \frac{1}{\gamma} \ln \left( \frac{(1 + z)(1 -\alpha e^{-\gamma} z)}{(1 -\alpha z)(z + e^{-\gamma c})}\right)
\qquad \Longleftrightarrow \qquad
e^{\gamma t} = \frac{(1 - \alpha e^{-\gamma}z)(z+1)}{(1-\alpha z)(z+ e^{-\gamma c})},
\end{equation}
which is a quadratic equation for $z$ in terms of $e^{\gamma t}$:
\begin{equation}
\alpha (e^{\gamma t} - e^{-\gamma}) z^2 + (\alpha (e^{\gamma (t-c)} - e^{-\gamma}) - e^{\gamma t} + 1) z +  (1 - e^{\gamma (t - c)}) = 0.
\end{equation}
%\begin{equation}
%(e^{\gamma (x+c+1)} - e^{\gamma c}) z^2 + (\alpha (e^{\gamma (x+1)} - e^{\gamma c}) - e^{\gamma (x+c+1)} + e^{\gamma(c+1)}) z + \alpha (e^{\gamma(c+1)} - e^{\gamma(x+1)}) = 0.
%\end{equation}
The solutions are
\begin{equation}
  \begin{aligned}
  z_{1,2}& =\frac{-\alpha (e^{\gamma (t-c)} - e^{-\gamma}) + e^{\gamma t} - 1}{2 \alpha(e^{\gamma t} - e^{-\gamma})} \\
  & \hspace{10pt} \pm\frac{\sqrt{(\alpha (e^{\gamma (t-c)} -
      e^{-\gamma}) - e^{\gamma t} + 1)^{2} - 4 \cdot \alpha (e^{\gamma t} - e^{-\gamma}) \cdot (1 - e^{\gamma (t - c)})}}{2 \alpha(e^{\gamma t} - e^{-\gamma})}.
%  z_{1,2}& =\frac{-\alpha e^{\gamma c}+e^{\gamma(c+1)}+\alpha e^{\gamma(x+1)}-e^{\gamma(c+1+x)}}{2 (e^{\gamma c}-e^{\gamma(c+1+x)})} \\
%  & \hspace{10pt} \pm\frac{\sqrt{(-\alpha e^{\gamma c}+e^{\gamma(c+1)}+\alpha e^{\gamma(x+1)}-e^{\gamma(c+1+x)})^{2} - 4 \alpha (e^{\gamma(c+1)}-e^{\gamma(x+1)})(e^{\gamma(c+1+x)}-e^{\gamma c})}}{2 (e^{\gamma c}-e^{\gamma(c+1+x)})}.
  \end{aligned}
\end{equation}
To determine the domain $[t_{-},t_{+}]$ we need to solve~\eqref{eq:zdz-2-S-eq-zero}, which in this case reads
\begin{equation}
  \label{eq:zdz-2-S-eq-zero-principal}
  \frac{1}{\gamma} \left( \frac{1}{1-\alpha z} - \frac{1}{1-\alpha e^{-\gamma}z} + \frac{z}{z+1} - \frac{z}{z+ e^{-\gamma c}} \right) = 0.
\end{equation}
Transforming to the common denominator we obtain
\begin{equation}
  \label{eq:zdz-2-S-eq-zero-principal-common-denominator}
  \frac{-z}{\gamma} \cdot
  \frac{\alpha (\alpha-e^{\gamma(c+1)} + (\alpha + 1) e^{\gamma c}) z^2 + 2 \alpha (1-e^{\gamma (c+1)}) z + \alpha (1-e^{\gamma})+e^{\gamma (c+1)} - e^{\gamma}}
    {(1-\alpha z)(1+z)(\alpha z-e^{\gamma})(1+e^{\gamma c}z)} = 0,
\end{equation}
and the roots are
\begin{equation}
  \label{eq:zdz-2-S-eq-zero-principal-roots}
  z_{\pm} = \frac{\alpha (1 - e^{\gamma (c+1)}) \mp \sqrt{\alpha(1 + \alpha)} \sqrt{\left(e^{\gamma}-1\right) \left(e^{\gamma c}-1\right) \left(e^{\gamma (c+1)}+\alpha\right)}}
  {\alpha(e^{\gamma(c+1)} -\alpha- (\alpha + 1) e^{\gamma c})},
\end{equation}
so

\begin{equation}
  \label{eq:x-pm-principal}
  t_{\pm}=\frac{1}{\gamma}\ln\left(\frac{e^{\gamma(c-1)}\left(2\alpha-\alpha e^{\gamma}+e^{\gamma(c+1)}+2\alpha e^{\gamma(c+1)}\pm 2\sqrt{\alpha((1+\alpha)e^{\gamma}-1)((e^{\gamma c}-1) e^{\gamma(c+1)}-\alpha)}\right)}{(\alpha+e^{\gamma c})^{2}}\right).
\end{equation}
Finally, the limit density is given by the formula~\eqref{eq:rho-arg-z}, that reads
\begin{equation}
  \label{eq:rho-principal}
  \rho(t)= \frac{1}{\pi} \arccos\left( \frac{\alpha e^{c\gamma} - e^{(c+1)\gamma} - \alpha e^{\gamma(t+1)} + e^{\gamma(c+t+1)}}{2 \sqrt{\alpha (e^{\gamma (c+1)} - e^{\gamma (t+1)}) (e^{\gamma (c+t+1)} - e^{\gamma c})}}
\right).
\end{equation}
If we take $\alpha=1$ and shit  the parameter $t\to t+1$ this formula recovers the limit shape presented in \cite{nazarov2022skew}.

Taking $x_{i}=\alpha e^{-\gamma\frac{i-1}{n}}$, $y_{j} = e^{\gamma\frac{j-1}{n}}$, so
that we need to solve Equation~\eqref{eq:zdz-S-eq-zero} for $f(s) = \alpha e^{-\gamma s}, g(s) = e^{\gamma c s}$, we similarly obtain the limit shape.
Solving Equation~\eqref{eq:zdz-S-eq-zero} we obtain the roots

\begin{equation}
  \label{eq:z12-principal-inv-principal}
  z_{1,2}=\frac{\alpha e^{\gamma c}-(\alpha-1)e^{\gamma(t+1)}-e^{\gamma}}{2\alpha(e^{\gamma(t+1)}-1)}\pm
  \frac{\sqrt{\left((\alpha-1)e^{\gamma(t+1)}-\alpha e^{\gamma c}+e^{\gamma}\right)^{2} -4\alpha e^{\gamma}(e^{\gamma(t+1)}-1)(e^{\gamma c}-e^{\gamma t})}}{2\alpha(e^{\gamma(t+1)}-1)}.
%  z_{1,2}=\frac{-e^{\gamma}+e^{\gamma c}\pm \sqrt{(e^{\gamma}-e^{\gamma c})^{2}-4(e^{\gamma(c+1)}-e^{\gamma(t+1)})(e^{\gamma(t+1)}-1)}}{2(e^{\gamma(t+1)}-1)}.
\end{equation}
We obtain the support $[t_{-},t_{+}]$ by solving~\eqref{eq:zdz-2-S-eq-zero}:
\begin{equation}
  \label{eq:xpm-principal-inv-principal}
  \begin{aligned}
   t_{\pm} & = - 1 - \frac{2}{\gamma}\ln(1+\alpha)
   \\ & \hspace{20pt} + \frac{1}{\gamma}\ln\left(e^{\gamma} +2\alpha -\alpha e^{\gamma} +\alpha (2 e^{\gamma}+\alpha -1) e^{\gamma c} \pm 2\sqrt{\alpha(e^{\gamma}-1)(\alpha+e^{\gamma})(e^{\gamma c}-1)(1+\alpha e^{\gamma c})}\right).
 % t_{\pm}=-1-\frac{1}{\gamma}\ln 2+\frac{1}{\gamma}\ln\left(1+e^{\gamma(c+1)}\pm\sqrt{(e^{2\gamma c}-1)(e^{2\gamma}-1)}\right).
 \end{aligned}
\end{equation}
By using~\eqref{eq:rho-arg-z} we obtain the density
\begin{equation}
  \label{eq:rho-principal-inverse}
  \rho(t)=\frac{1}{\pi}\arccos\left(\frac{\alpha e^{\gamma c}-(\alpha-1)e^{\gamma(t+1)}-e^{\gamma}}{2\sqrt{\left((\alpha-1)e^{\gamma(t+1)}-\alpha e^{\gamma c}+e^{\gamma}\right)^{2} -4\alpha e^{\gamma}(e^{\gamma(t+1)}-1)(e^{\gamma c}-e^{\gamma t})}}\right).
\end{equation}
Taking the limit $\alpha \to 1$, the density~\eqref{eq:rho-principal-inverse} simplifies to 
\begin{equation}
  \label{eq:rho-principal-inverse-alpha1}
  \rho(t)=\frac{1}{\pi}\arccos\left(\mathrm{sign}(-\gamma)\times\frac{e^{\gamma-\frac{\gamma t}{2}}}{2}\dfrac{1-e^{\gamma(c-1)}}{\sqrt{(1-e^{\gamma t})(1-e^{\gamma(c+1-t)})}}\right),
\end{equation}
which coincides with the result in \cite{nazarov2022skew} after the shift of $t$.

Limit shapes for various values of $\gamma$ are presented in Fig.~\ref{fig:limit-shape-plots}.

\begin{figure}
  \centering
  \includegraphics[width=0.47\linewidth]{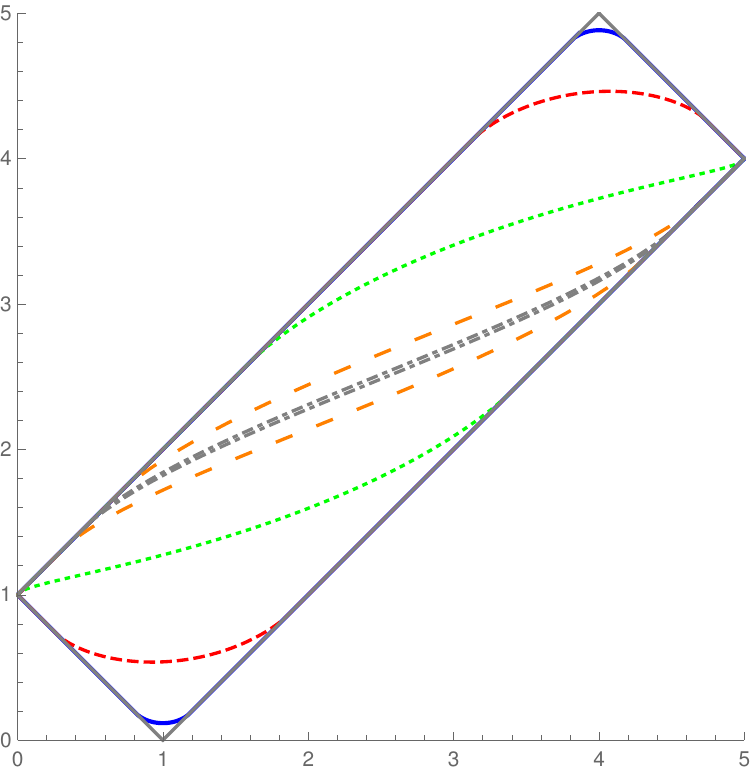}
  \quad
  \includegraphics[width=0.47\linewidth]{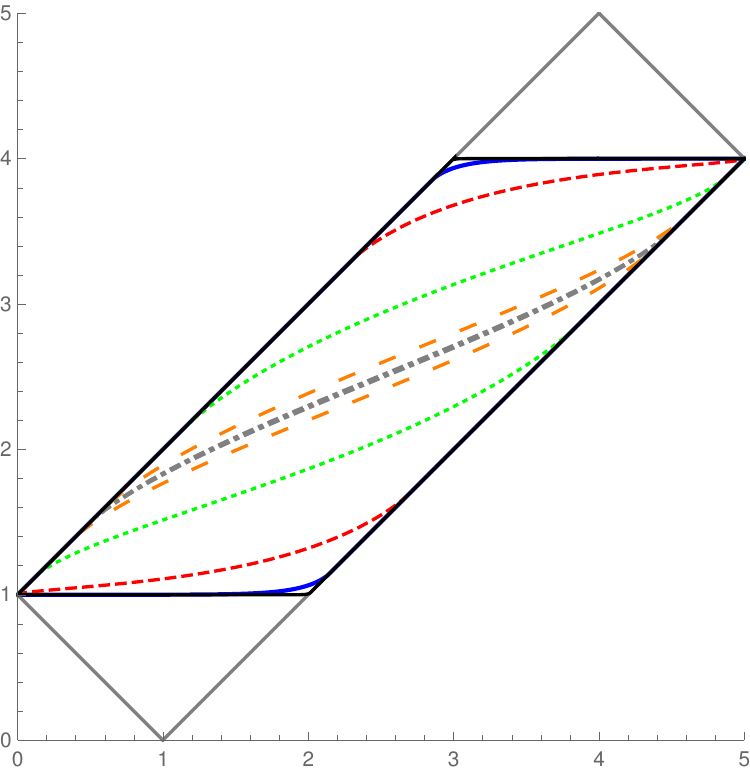}  
  \caption{Plots of the limit shapes (shifted $t \mapsto t+1$) for Young diagrams corresponding to the densities \eqref{eq:rho-principal} (left) and \eqref{eq:rho-principal-inverse-alpha1} (right) for $c=4$ and the values of $\gamma$ (bottom to top): $-10$ (solid blue), $-2$ (dashed red), $-0.5$ (dotted green), $-0.1$ (sparsely dashed orange),  $-0.01$ (dot-dashed gray), $0.01$ (dot-dashed gray), $0.1$ (sparsely dashed orange), $0.5$ (dotted green), $2$ (dashed red), $10$ (solid blue).
    Solid black lines on the right panel correspond to $\gamma=\pm\infty$ ($q=\mathrm{const}$).  } %% Maybe add a phrase on q<1 q>1
  \label{fig:limit-shape-plots}
\end{figure}
%In Section~\ref{sec:boundary-asymptotics}, we study local fluctuations for $q$-Krawtchouk case among others, that is the fluctuations in the length of the first row of the diagram.

Next, we consider the edge asymptotics governed by the Airy kernel.
To make formulas less cumbersome we take $\alpha=1$, but we can still explicitly do the computations below for generic $\alpha$.
First for $f(s)=e^{-\gamma s}, g(s)=e^{-\gamma c s}$ we have critical value
\begin{equation}
  \zcrit = \frac{1-e^{(c+1)\gamma}+\sqrt{2}
    \sqrt{\left(e^{\gamma}-1\right) 
      \left(e^{c \gamma}-1\right) \left(e^{(c+1)\gamma}+1\right)}}
  {e^{(c+1)\gamma}-2 e^{c \gamma}+1}.
\end{equation}
Denoting by $\Delta=\sqrt{2}
    \sqrt{\left(e^{\gamma}-1\right) 
      \left(e^{c \gamma}-1\right) \left(e^{(c+1)\gamma}+1\right)}$, for the $\left.(z\partial_{z})^{3}S(z)\right|_{z=\zcrit}$ we have
\begin{equation}
\label{eq:zdz3-exp-roots}
\begin{aligned}
  & \left.(z\partial_{z})^{3}S(z)\right|_{z=\zcrit}=-\left(-2 e^{c \gamma}+e^{c \gamma+\gamma}+1\right)^2 \\
  &\times \left[\frac{ \left(\Delta
   \left(2 e^{c \gamma}-3 e^{2 (c+1) \gamma}+2 e^{(c+2)
   \gamma}-2 e^{c \gamma+\gamma}+2 e^{2 c \gamma+\gamma}+2
   e^\gamma-3\right)\right)}{\frac{\gamma}{2}\Delta^{2} \left(-2 \Delta+2
   e^{c \gamma}+e^{(c+2) \gamma}-e^{c \gamma+\gamma}+e^\gamma-3\right)
   \left(e^{c \gamma} \left(1-2 \Delta\right)-2 e^{2 c \gamma}-e^{c
       \gamma+\gamma}+3 e^{2 c \gamma+\gamma}-1\right)}\right.\\
&\hspace{20pt}+\left.\frac{+4 e^{c \gamma}+4 e^{2 (c+1) \gamma}+4
   e^{3 (c+1) \gamma}-4 e^{(2 c+3) \gamma}-4 e^{(3 c+2)
     \gamma}-4 e^{c \gamma+\gamma}+4 e^\gamma-4}{\frac{\gamma}{2}\Delta^{2} \left(-2 \Delta+2
   e^{c \gamma}+e^{(c+2) \gamma}-e^{c \gamma+\gamma}+e^\gamma-3\right)
   \left(e^{c \gamma} \left(1-2 \Delta\right)-2 e^{2 c \gamma}-e^{c
       \gamma+\gamma}+3 e^{2 c \gamma+\gamma}-1\right)}\right].
\end{aligned}
\end{equation}
By Equation~\eqref{eq:normalization-for-airy}, the normalization constant $\sigma$ for the Airy kernel is given by~\eqref{eq:zdz3-exp-roots} raised to the power $-\frac{1}{3}$ and multiplied by $2^{\frac{1}{3}}$. 
We present a plot of $\sigma$ as a function of $c$ for various values of $\gamma$ in Fig.~\ref{fig:tracy-widom-width-for-gamma}.
Note that for all values of $\gamma$, we have a divergence at various values of $c \leq 1$.
These degenerate cases are described by the discrete Hermite kernel, as stated in Theorem \ref{thm:near-corner-asymptotics}, discussed in Section \ref{sec:asymptotics-near-the-corner} and illustrated in the example of constant specialization in Section \ref{sec:const-funct-krawtch} for $c \to \alpha$.
In Fig.~\ref{fig:discrete-fluctuations-for-gamma-05}(left), we present a histogram of first row lengths and on the right is one randomly sampled diagram.
We see that this discrete distribution appears when $n t_{+} = k$, which means that the boundary of the support of $\rho(t)$ is exactly in the corner of the $n \times k$ box and first row can not fluctuate freely. 

\begin{figure}
  \centering
  \includegraphics[width=11cm]{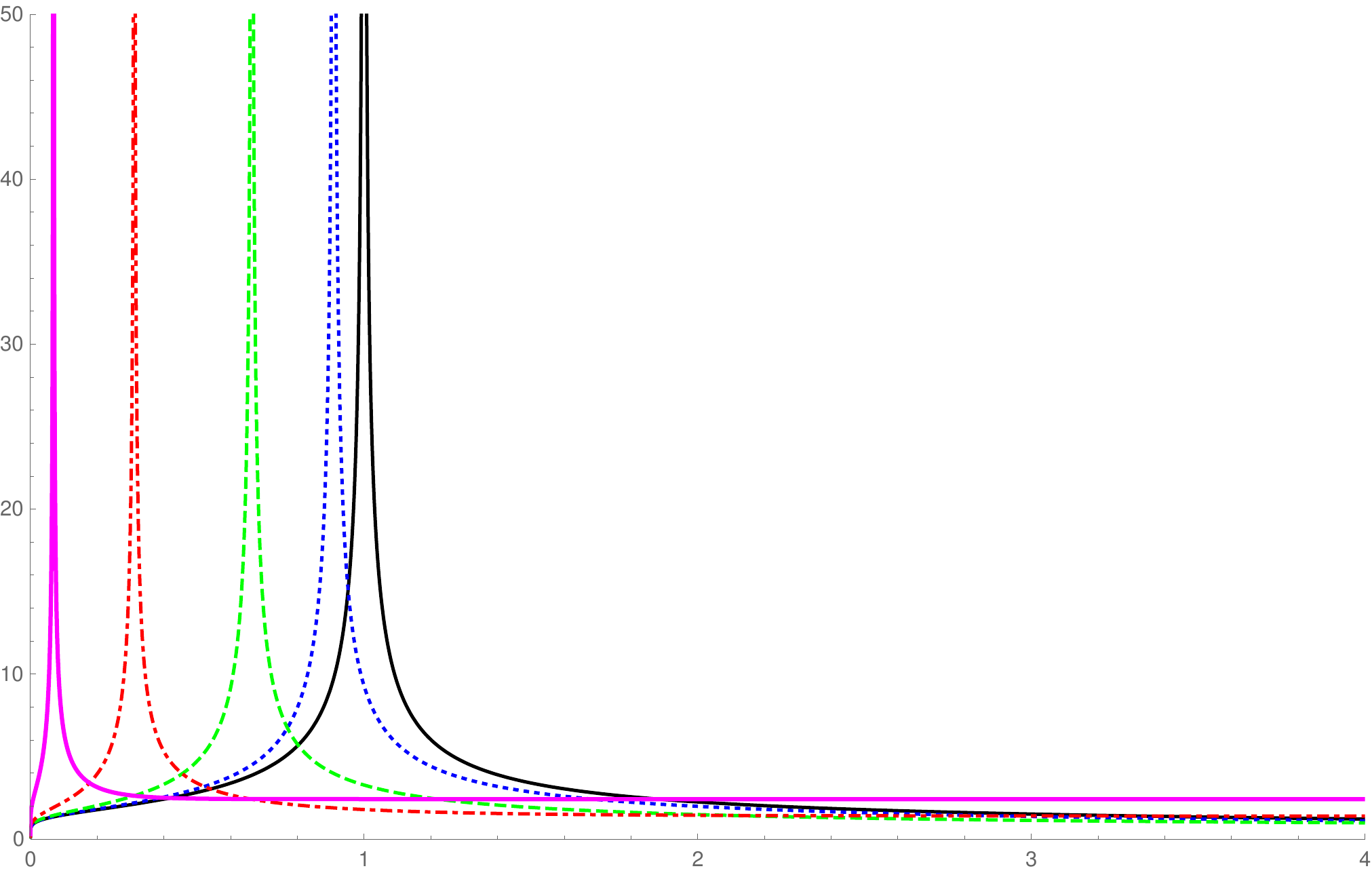}
  \caption{Plots of $\sigma$ as a function of $c$ for $\gamma=0$ (solid black), $\gamma=0.1$ (dotted blue), $\gamma=0.5$ (dashed green), $\gamma=2$ (dot-dashed red), $\gamma=10$ (solid magenta).}
  \label{fig:tracy-widom-width-for-gamma}
\end{figure}

\begin{figure}
  \centering
  \includegraphics[width=8cm]{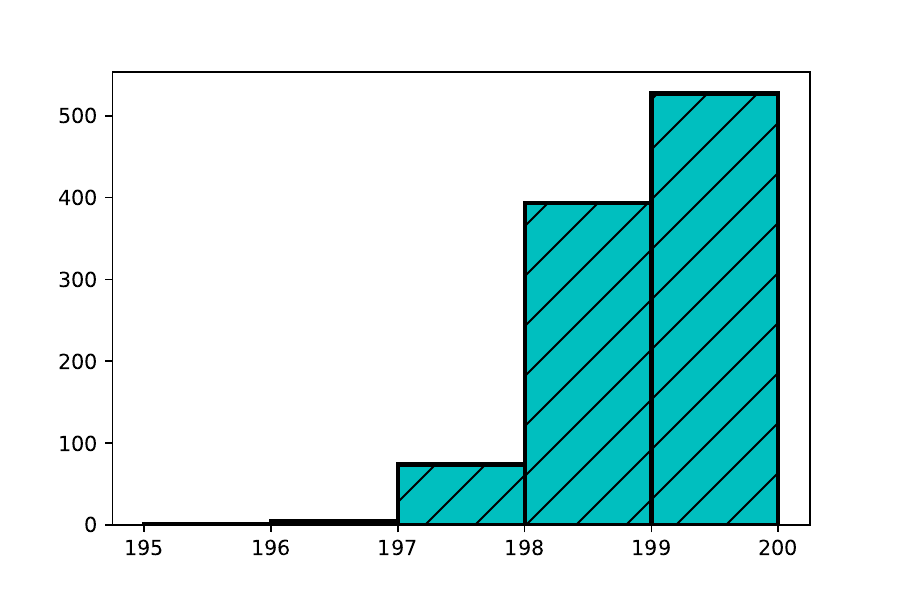}
  \includegraphics[width=8cm]{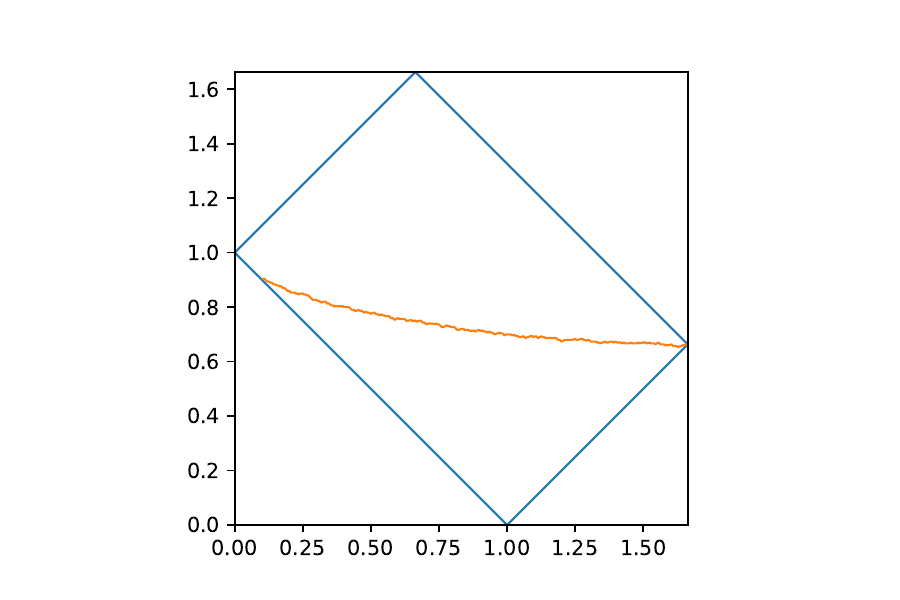}
  \caption{{\it Left:} Histogram of first row length $\lambda_{1}$ for 1000 samples with $n=300$, $k=199$, $\gamma=0.5$. {\it Right:} Sample diagram for  $n=300$, $k=199$, $\gamma=0.5$.}
  \label{fig:discrete-fluctuations-for-gamma-05}
\end{figure}

Similarly, for $f(s)=e^{-\gamma s}$, $g(s)=e^{\gamma c s}$ we use \eqref{eq:z12-principal-inv-principal} to obtain
\begin{equation}
  \zcrit = \frac{e^{\gamma}-e^{\gamma c}}{1-e^{\gamma(c+1)}-\sqrt{(e^{2\gamma}-1)(e^{2\gamma c}-1)}}. 
\end{equation}
Denoting the square root by $\Delta=\sqrt{(e^{2\gamma}-1)(e^{2\gamma c}-1)}$ and computing $(z\partial_{z})^{3}S(z)$ using \eqref{eq:zdz-3-S}, we obtain
\begin{equation}
\label{eq:zdz3-exp-dual-roots}
\begin{aligned}
\left.(z\partial_{z})^{3}S(z)\right|_{z=\zcrit} =\frac{1}{\gamma}\left(e^\gamma-e^{ \gamma c}\right)^2
&\left(\frac{(e^{\gamma(c+1)}-1)(e^{\gamma(c+1)}-1+\Delta)}{\left(-1+e^{2\gamma}+e^{2\gamma c}-e^{2\gamma(c+1)}+\Delta(1-e^{\gamma(c+1)})\right)^{2}}\right.-\\
&\hspace{20pt}\left.-\frac{1}{\left(e^{\gamma c}- e^{\gamma(c+2)}-e^\gamma\Delta\right)^2}
  -\frac{1}{\left(e^{\gamma}- e^{\gamma (c+2)}-e^{\gamma c}\Delta\right)^2}\right).
\end{aligned}
\end{equation}
By formula \eqref{eq:normalization-for-airy}, the normalization constant $\sigma$ is given by~\eqref{eq:zdz3-exp-dual-roots} raised to the power $-\frac{1}{3}$ and multiplied by $2^{\frac{1}{3}}$.
Here $c=1$ is the case when we have the discrete Hermite kernel for all values of $\gamma$, as demonstrated in Fig.~\ref{fig:discrete-fluctuations-for-principal-inverse-gamma-05}(left), whereas one randomly sampled diagram is given in Fig.~\ref{fig:discrete-fluctuations-for-principal-inverse-gamma-05}(right).

\begin{figure}
  \begin{center}
  \includegraphics[height=5cm]{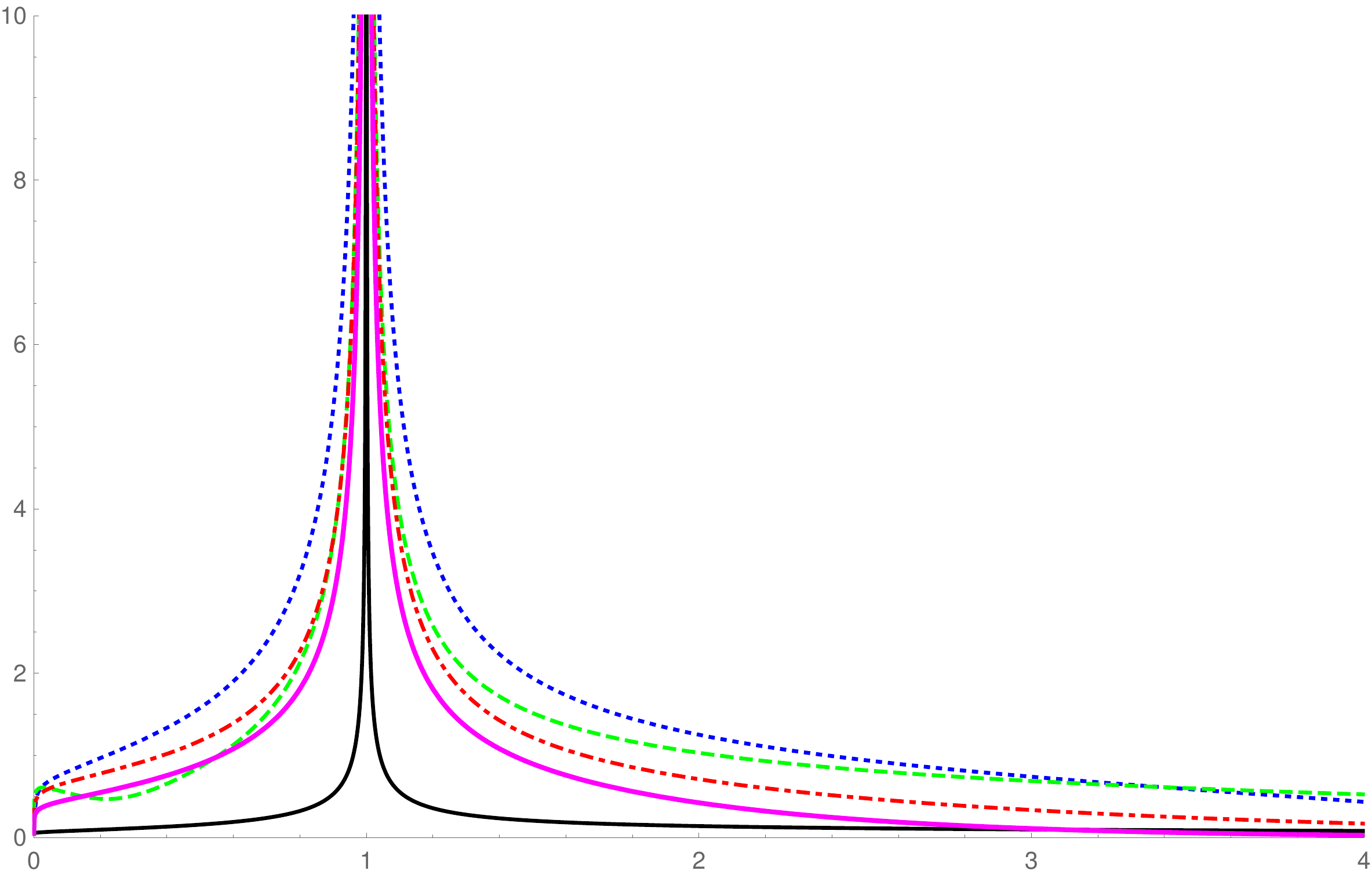}
  \includegraphics[width=8cm]{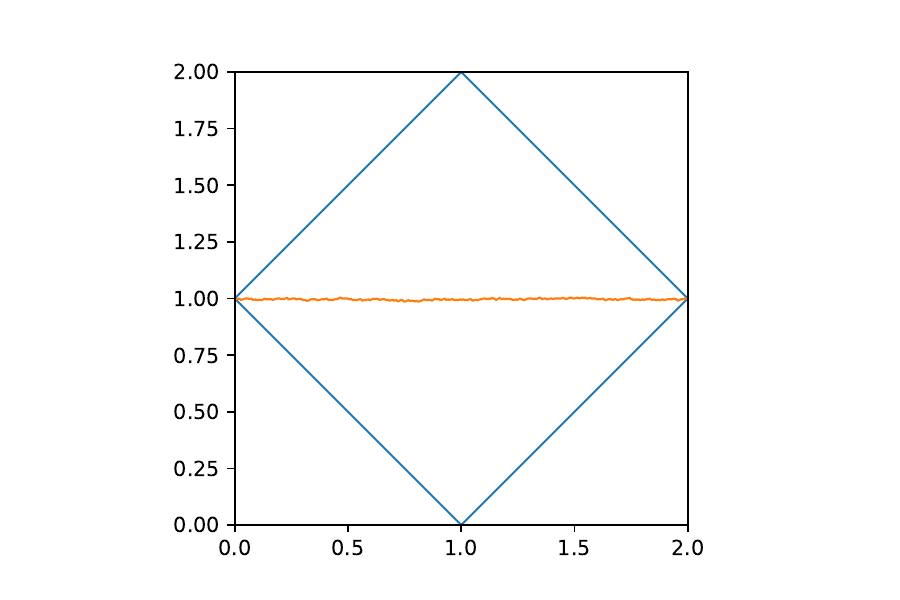}
  \end{center}
  \caption{{\it Left:} Plots of $\sigma$ as a function of $c$ for $\gamma=0$ (solid black), $\gamma=0.1$ (dotted blue), $\gamma=0.5$ (dashed green), $\gamma=2$ (dot-dashed red), $\gamma=4$ (solid magenta). {\it Right:} Sample diagram for  $n=300$, $k=300$, $\gamma=0.5$.}
  \label{fig:discrete-fluctuations-for-principal-inverse-gamma-05}
\end{figure}

%===============================================================================
\subsection{Uniform convergence}
\label{sec:uniform-convergence}

In this subsection, we will give another proof of Corollary~\ref{cor:uniform-convergence} for the $q$-Krawtchouk ensemble by using different method, emphasizing the exponential decay of the measure when the diagrams are far from the limit shape.
Starting with formulas \eqref{eq:q-krawtchouk-explicit}, \eqref{eq:q-krawtchouk-inv-explicit} for the probability measure, introducing the variables $s_{i}=q^{-a_{i}}$, and recalling that $q=e^{-\gamma/n}$, we rewrite the measure in the exponential form as
\begin{equation}
  \label{eq:q-measure-exp}
  \mu_{n,k}(\lambda|q,q^{\pm 1})=\exp\left[\sum_{i\neq j} \ln|s_{i}-s_{j}| +\sum_{i=1}^{n} \ln W^{\pm}_{n,k}(s_{i}|\alpha)\right].
\end{equation}

We consider the limit $n,k \to\infty,\; q \to 1 $ such that $q=1-\frac{\gamma}{n}$,
%$c = \lim_{n,\kappa\to\infty}\frac{k}{n}=\mathrm{const}$ and
$\frac{k}{n} = c + \mathcal{O}\left(\frac{1}{n}\right)$
and substitute the leading approximation for $q$-factorials 
\begin{equation}
  \label{eq:stirling-q-factorials}
 \ln [a]_{q}!=\sum_{i=1}^{a}\ln\frac{1-q^{i}}{1-q}\approx n\int_{0}^{a/n} \ln\left(1-e^{-\gamma y}\right)\dy +a\ln (n/\gamma)=\frac{n}{\gamma}\left[\Li_{2}\left(e^{-\gamma a/n}\right)-\Li_{2}(1)\right]+a\ln (n/\gamma),
\end{equation}
into  $\ln W^{\pm}_{n,k}(a|\alpha)$, written as
\begin{equation}
  \label{eq:q-krawtchouk-potential}
  \ln W^{\pm}_{n,k}(a|\alpha)=\ln [n+k-1]_{q}!-\ln[a]_{q}!-\ln [n+k-1-a]_{q}!-\frac{\gamma a(a-1)}{2n}+a\ln \alpha -a\gamma+\frac{a\gamma}{n}\left(\frac{k+1}{2}\pm\frac{k-1}{2}\right),
\end{equation}
to obtain, after the substitution $a=\frac{n}{\gamma}\ln s$:
\begin{multline}
  \label{eq:q-krawtchouk-potential-in-s}
  \ln W^{\pm}_{n,k}(s|\alpha)\approx\\\approx\frac{n}{\gamma}\left[\frac{\pi^{2}}{6}+\Li_{2}\left(e^{-\gamma(c+1)}\right)-\Li_{2}\left(s^{-1}\right)-\Li_{2}\left(e^{-\gamma(c+1)}s\right)-\frac{(\ln s)^{2}}{2}+\left(\gamma\left(\frac{c}{2}-1\pm \frac{c}{2}\right)+\ln\alpha\right)\ln s\right].
\end{multline}
The derivative of $\ln W^{\pm}_{n,k}(s|\alpha)$ can be used to obtain the limit shape by solving the variational problem, as discussed below:
\begin{align}
\frac{\mathrm{d}}{\ds}\ln W^{\pm}_{n,k}(s|\alpha) & \approx
  \frac{n}{\gamma s}\left[-\ln \left(1-1/s\right)+\ln\left(1-e^{-\gamma(c+1)}s\right)-\ln s+\gamma\left(\frac{c}{2}-1\pm \frac{c}{2}\right)\right] \nonumber
  \\ & = \frac{n}{\gamma s} \left[\ln\left(e^{\gamma(c+1)}-s\right)-\ln \left(s-1\right)+\gamma\left(-\frac{c}{2}-2\pm \frac{c}{2}\right)\right].
\end{align}

If we consider the upper boundary of the rotated and scaled diagram as
the function $f_{n}(s)$, we have $f_{n}'(a_{i}/n)=\pm 1$ and
$\frac{1-f_{n}'(a_{i}/n)}{2}=1$ if there is a particle at the midpoint
of the interval $a_{i}$ as shown in Fig.~\ref{fig:maya_diagram} or $0$
otherwise. Therefore we can consider the sum
$\sum_{i\neq j} \ln|s_{i}-s_{j}|$ in \eqref{eq:q-measure-exp}  as an approximation to an integral
\begin{equation}
  \label{eq:log-double-integral}
\sum_{i\neq j} \ln|s_{i}-s_{j}|\approx\frac{n^{2}}{\gamma^{2}}\int_{1}^{e^{\gamma(c+1)}} \frac{\dt}{t} \int_{1}^{e^{\gamma(c+1)}} \frac{\ds}{s}
\frac{1}{4}\left(1-f_{n}'\left(\frac{n}{\gamma}\ln s\right)\right)\left(1-f_{n}'\left(\frac{n}{\gamma}\ln t\right)\right)\ln
\abs{s-t}.
\end{equation}
If we substitute Taylor expansion of
$\ln\abs{s-t}$ at the points $s=e^{\gamma i/n}, t=e^{\gamma j/n}$ for
$i,j=0,\dotsc, n+k-1$, and use $\int_{\exp(\gamma i/n)}^{\exp(\gamma
  (i+1)/n)}\frac{\ds}{s}=\gamma/n$, we recover the double sum of the logarithms. 
Expanding the brackets in the integral \eqref{eq:log-double-integral} and considering $f_{n}$ as a
function of $s$, we obtain the quadratic
functional $Q[f_{n}]=B[f_{n},f_{n}]$ where
\begin{equation}
B[f_{1},f_{2}] = \int_{1}^{e^{\gamma(c+1)}} \dt \int_{1}^{e^{\gamma(c+1)}} \ds \,
\frac{1}{4} f'_{1}(s) f'_{2}(t) \ln\abs{s-t}^{-1} ,
\end{equation}
a linear term
\begin{equation}
  \label{eq:linear-term-in-Q}
L[f_{n}]=-\frac{1}{2}\int_{1}^{e^{\gamma(c+1)}} \ds \int_{1}^{e^{\gamma(c+1)}} \frac{\dt}{t} f'_{n}(s) \ln\abs{s-t},
\end{equation}
and a constant term $C_{n}=\frac{n^{2}}{\gamma^{2}}\int_{1}^{e^{\gamma(c+1)}}  \int_{1}^{e^{\gamma(c+1)}} \ln
\abs{s-t}\frac{\dt}{t}\frac{\ds}{s}$:
\begin{equation}
  \frac{n^{2}}{\gamma^{2}}\int_{1}^{e^{\gamma(c+1)}} \frac{\dt}{t} \int_{1}^{e^{\gamma(c+1)}} \frac{\ds}{s}
\frac{1}{4}\left(1-f_{n}'\left(\frac{n}{\gamma}\ln s\right)\right)\left(1-f_{n}'\left(\frac{n}{\gamma}\ln t\right)\right)\ln
\abs{s-t}=Q[f_{n}]+L[f_{n}]+C_{n}.
\end{equation}

Similarly to the sum $\sum_{i\neq j} \ln|s_{i}-s_{j}|$, the sum $\sum_{i=1}^{n}\ln W^{\pm}_{n,k}(s_{i}|\alpha)$ can be approximated by
an integral
\begin{equation}
  \label{eq:q-krawtchouk-weight-integral}
  \begin{aligned}
  &\sum_{i=1}^{n}\ln W^{\pm}_{n,k}(s_{i}|\alpha)\approx
  \frac{n^{2}}{\gamma^{2}}\int_{1}^{e^{\gamma(c+1)}} \frac{\ds}{2}\left(\frac{1}{s} - f'_{n}(s) \right)  \\& \times\left[\frac{\pi^{2}}{6}+\Li_{2}\left(e^{-\gamma(c+1)}\right)-\Li_{2}\left(s^{-1}\right)-\Li_{2}\left(e^{-\gamma(c+1)}s\right)-\frac{(\ln s)^{2}}{2}+\left(\gamma\left(\frac{c}{2}-1\pm \frac{c}{2}\right)+\ln\alpha\right)\ln s\right] .
\end{aligned}
\end{equation}
Integrating linear term \eqref{eq:linear-term-in-Q} over $t$, we obtain
\begin{equation}
  \label{eq:integral-of-logarithm}
  L[f_{n}]=-\frac{1}{2}\int_{1}^{e^{\gamma(c+1)}}\ds f_{n}'(s)\left[\Li_{2}\left(s^{-1}\right)+\Li_{2}\left(e^{-\gamma(c+1)}s\right)+\frac{(\ln s)^{2}}{2}-\frac{\pi^{2}}{3}+\frac{\gamma^{2}(c+1)^{2}}{2}\right]. 
\end{equation}
If we combine this expression with the terms linear in $f_{n}'(s)$ in~\eqref{eq:q-krawtchouk-weight-integral} and use $\int_{1}^{e^{\gamma(c+1)}} \ds f_{n}'(s) = f_{n}(c+1)-f_{n}(0)=c-1$ to get rid of the $s$-independent contributions, we get the final form of the linear term in the exponential:
\begin{equation}
  \label{eq:linear-term}
 L^{\pm}[f_{n}]=-\frac{1}{2}\int_{1}^{e^{\gamma(c+1)}}\ds f_{n}'(s)\left(\gamma\left(\frac{c}{2}-1\pm \frac{c}{2}\right)+\ln\alpha\right)\ln s
\end{equation}
Combining all $s$-independent contributions with $C_{n}$ we get the final normalization constant $\widetilde{C}_{n}$, so we can write the probability of the diagram $\lambda$ as an exponent of a functional $J[f_{n}] = Q[f_{n}]+L^{\pm}[f_{n}]+\widetilde{C}_{n}$ of the upper boundary $f_{n}$:
\begin{equation}
  \label{eq:measure-as-functional}
  \mu_{n,k}(\lambda|q,q^{\pm 1}) = \exp\left(-\frac{n^{2}}{\gamma^{2}}J[f_{n}]+\mathcal{O}(n\ln n)\right),
\end{equation}
where the correction term $\mathcal{O}(n\ln n)$ comes from the estimates of next orders in Stirling approximation and difference between sums and integrals.
(For more detailed computations of this sort, see \cite[Lemma 2]{NNP20}.)

Analysis of the functional $J[f_{n}]$ is similar to~\cite[Ch.~1]{romik2015surprising}. The first step is to show that the limit density $\rho(t)$ related to the limit shape $\Omega$ by the formula \eqref{eq:limit-shape-as-integral}, can be recovered as a solution to the limiting minimization problem
\begin{equation}
  \label{eq:var-problem}
  \iint \ds \, \dt \ln \abs{s-t}^{-1} \rho(s) \rho(t) + \int \ds \rho(s) \ln W^{\pm}(s|\alpha),
\end{equation}
where $\ln W^{\pm}(s|\alpha)=\lim_{n\to\infty} \frac{1}{n}\ln W^{\pm}_{n,k}(s|\alpha)$. %%% Fix this
By~\cite[Ch.~I, Thm.~1.3]{ST97} (see also~\cite[Thm.~ 2.1]{johansson98}) there exists a unique solution of this problem. %exists and is unique in particular for continuous potentials on finite intervals.

In~\cite{nazarov2022skew}, we have proven the weak convergence of the measures $\mu_{n,k}(\lambda|q,q^{\pm 1})$ to the equilibrium measure $\rho(t) \, \dx$.
The minimizing property of the limit density $\rho(t)$ follows from this convergence. We were not able to find this kind of statement (for weights depending on $n$) in the existing sources, therefore we present the sketch of the proof below.

The key point of the proof is the following ``large deviation'' estimate (see~\cite[Lemma 6.67]{deift1999orthogonal}, \cite{johansson98} for the case $\ln W^{\pm}_{n,k}(s|\alpha) \equiv n\ln W^{\pm}(s|\alpha)$), with the proof essentially the same for the varying weights. Denote by $E$ the minimal value of~\eqref{eq:var-problem}, and for any $n\in\NN$, $k=\lfloor cn\rfloor$, $\eta > 0$ set
\begin{equation}
A_{n,\eta} = \left\{ \lambda\; \biggl|\; \sum_{i\neq j} \ln|s_{i}-s_{j}| +\sum_{i=1}^{n} \ln W^{\pm}_{n,\lfloor cn\rfloor}(s_{i}|\alpha) \le n^2 (E + \eta) \biggr.\right\}.
\end{equation}
The following estimate holds for the complement of $A_{n,\eta}$.
\begin{prop}\label{prop:large-dev}
For any number $a>0$ there exists $N\in\NN$, which depends on $\eta$ but not on $a$, such that
\begin{equation}
\mu_{n,\lfloor cn\rfloor}(A_{n,\eta+a}^c|q,q^{\pm 1}) \le e^{-an^2} \quad \text{ for all } n \ge N.
\end{equation}
\end{prop}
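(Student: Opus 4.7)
The plan is to prove the estimate by a standard large-deviation argument in the style of Johansson~\cite{johansson98}, adapted to the discrete $q$-lattice setting with $n$-varying weights $W^{\pm}_{n,\lfloor cn\rfloor}$; see also~\cite[Lemma~6.67]{deift1999orthogonal} for the textbook account in the continuous setting. Let $S(\lambda) := \sum_{i\neq j}\ln|s_i-s_j| + \sum_i \ln W^{\pm}_{n,\lfloor cn\rfloor}(s_i|\alpha)$ be the exponent in~\eqref{eq:q-measure-exp} and $Z_{n,k} := \sum_\lambda e^{S(\lambda)}$ the associated partition function, so that $\mu_{n,\lfloor cn\rfloor}(\lambda) = e^{S(\lambda)}/Z_{n,k}$. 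The strategy is to bound $\mu_{n,\lfloor cn\rfloor}(A^c_{n,\eta+a}) = Z_{n,k}^{-1}\sum_{\lambda\in A^c} e^{S(\lambda)}$ by lower-bounding the denominator and upper-bounding each summand of the numerator via the variational characterization of $E$.

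First, I would establish the lower bound $\ln Z_{n,k} \ge n^2 E - C_0 n\log n$. To this end, I construct a sequence of configurations $\lambda^\star_n$ whose scaled empirical Maya measures approximate the unique minimizer $\rho^\star$ of the variational problem~\eqref{eq:var-problem} (existence and uniqueness guaranteed by~\cite[Ch.~I, Thm.~1.3]{ST97}), and apply the asymptotic computations~\eqref{eq:log-double-integral}--\eqref{eq:measure-as-functional} together with the Stirling expansion~\eqref{eq:stirling-q-factorials} to obtain $S(\lambda^\star_n) \ge n^2 E - C_0 n\log n$ for an explicit constant $C_0$ independent of $n$.

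Second, I would obtain a matching uniform upper bound $S(\lambda) \le n^2 E + C_1 n\log n$ for every partition $\lambda$ in the $n\times\lfloor cn\rfloor$ rectangle. The key point is that the same asymptotic expansion applies uniformly in $\lambda$, and that the variational characterization of $E$ as the minimum of~\eqref{eq:var-problem} yields the pointwise inequality up to the $O(n\log n)$ error. Combining the two steps, for any $\lambda \in A^c_{n,\eta+a}$ we have
\begin{equation*}
n^2(E+\eta+a) < S(\lambda) \le n^2 E + C_1 n\log n,
\end{equation*}
forcing $n^2(\eta+a) \le C_1 n\log n$. Choosing $N = N(\eta)$ large enough that $C_1 n\log n < n^2\eta$ for all $n \ge N$ (a condition depending only on $\eta$, since $a>0$ is arbitrary), we conclude that $A^c_{n,\eta+a} = \emptyset$ for $n \ge N$, so the bound $\mu_{n,\lfloor cn\rfloor}(A^c_{n,\eta+a}) = 0 \le e^{-an^2}$ holds trivially.

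The main obstacle is the uniformity in $\lambda$ of the $O(n\log n)$ remainder in the second step. A pointwise expansion for each fixed density $\rho$ is straightforward from Stirling's formula and Riemann sum error analysis, but uniformity over the exponentially many admissible configurations requires a discrepancy-type bound for the discrete logarithmic energy $\sum_{i\neq j}\ln|s_i-s_j|$ in terms of its continuous counterpart $n^2\iint\ln|s-t|\rho(s)\rho(t)\ds\dt$. Such bounds of Beurling--Selberg type are available on the $q$-lattice $\{q^{-a}\}_{a=0}^{n+\lfloor cn\rfloor-1}$ since the particle spacing is uniformly bounded below (by $(1-q)q^{-a_{\max}}$) and the limiting potential from~\eqref{eq:q-krawtchouk-potential-in-s} is $\mathcal{C}^1$ on the support interval, ensuring the constants $C_0, C_1$ are absolute.
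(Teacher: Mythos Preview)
The paper does not give its own proof; it merely cites \cite[Lemma~6.67]{deift1999orthogonal} and \cite{johansson98}, noting that the argument carries over verbatim to $n$-varying weights. So the comparison is to the standard Johansson--Deift argument.

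Your proposal conflates two different arguments. In the opening paragraph you announce the Johansson strategy --- bound $\mu(A^c)=Z^{-1}\sum_{A^c}e^{S}$ via a lower bound on $Z$ and an upper bound on each summand --- but your actual Steps~2--3 do something else: you claim a \emph{uniform} bound $S(\lambda)\le n^2E+C_1 n\log n$ for \emph{every} $\lambda$ and conclude $A^c_{n,\eta+a}=\varnothing$. In that argument the lower bound on $Z$ from Step~1 is never used, so the write-up is internally inconsistent.

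More substantively, Step~2 does not follow from the variational input you invoke. You say ``$E$ as the minimum of~\eqref{eq:var-problem} yields the pointwise inequality'', but $S(\lambda)/n^2$ approximates $\iint\ln|s-t|\,\rho\rho+\int\rho\ln W^{\pm}$, whose logarithmic term carries the \emph{opposite} sign to that in~\eqref{eq:var-problem}. A minimum there does not translate into an upper bound for $S$; the sign bookkeeping has to be straightened out before your inequality can be claimed.

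The Johansson--Deift route that the paper cites avoids this entirely. With the sign convention that $A^c$ is the set where the discrete \emph{energy} $-S(\lambda)/n^2$ exceeds $E+\eta+a$, the upper bound on each summand over $A^c$ is immediate from the definition: $e^{S(\lambda)}\le e^{-n^2(E+\eta+a)}$ there. Combined with $Z\ge e^{-n^2(E+o(1))}$ (your Step~1, now genuinely used) and the crude count $|A^c|\le\binom{n+k}{n}\le e^{Cn}$, one gets $\mu(A^c)\le e^{-n^2(\eta+a)+o(n^2)}$, and choosing $N=N(\eta)$ so that the $o(n^2)$ term is below $n^2\eta$ finishes the proof. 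No uniform-in-$\lambda$ Beurling--Selberg discrepancy bound is required.
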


Denote by $\mu^{(1)}_n$ the first correlation measure for $\mu_{n,\lfloor cn\rfloor}$, restricted to the sets $A_{n, \eta=\frac1n}$ and normalized, \textit{i.e.}, for any compactly supported bounded function $h$ we have
\begin{equation*}
  \label{eq:first-corr-measure}
  \int h(s) \mathrm{d}\mu^{(1)}_n(s) := \left(n\; \mu_{n,\lfloor cn\rfloor}\left(A_{n,\eta=\frac{1}{n}}|q,q^{\pm 1}\right)\right)^{-1}\cdot\int_{A_{n,\eta=\frac{1}{n}}}  \sum_{i}h(s_{i}) \mathrm{d}\mu_{n,\lfloor cn\rfloor}(\lambda|q,q^{\pm 1}).
\end{equation*}
From the weak convergence of $\mu_{n,[cn]}(\lambda|q,q^{\pm 1})$, combined with the estimate from Proposition~\ref{prop:large-dev}, it follows that the correlation measures $\mu^{(1)}_n$ converge weakly to $\rho(t)dx$.
In addition, from the definition of $A_{n,\eta}$ it follows that $\mu^{(1)}_n$ should converge to the minimizer, hence the minimizer is given by the density $\rho(t)$ due to uniqueness. 

Alternatively, one can use Plemelj formula %~\cite{deift1999orthogonal} 
to solve a scalar Riemann--Hilbert problem to obtain the solution for the variational problem \eqref{eq:var-problem}.

As $\rho(t)$ is the solution to the variational problem, by~\cite[Ch.~I Rem.~1.5]{ST97} there exists a constant $\ell$ such that
\begin{subequations}
  \label{eq:variational-solution-ell}
\begin{align}
  &\int \dt \ln\abs{s-t}^{-1} \rho(t) +\ln W^{\pm}(s|\alpha)=\ell, \qquad \text{if } s\in\supp(\rho), \rho(s)<1;\\
  &\int \dt \ln\abs{s-t}^{-1} \rho(t) +\ln W^{\pm}(s|\alpha)\geq\ell, \qquad \text{if } s\not\in\supp(\rho);\\
  &\int \dt \ln\abs{s-t}^{-1} \rho(t) +\ln W^{\pm}(s|\alpha)\leq\ell, \qquad \text{if } \rho(s)=1.
\end{align}
\end{subequations}
We add the third case as there is an additional restriction $\rho(s)\le 1$. 

To prove the uniform convergence we need to demonstrate that the probability of a diagram $\lambda$ with the upper boundary $f_{n}$ can be estimated by the difference of $f_{n}$ and limit shape $\Omega$.

We note that $J[\Omega]$ is non-negative, since we can approximate $\Omega$ by an upper boundary of a diagram with any precision for $n$ large enough, but $J[f_{n}]$ is non-negative as it is equal to minus the logarithm of a probability.

Then we note that the functional $Q[f_{n}]$ is positive-definite on Lipschitz functions with compact support (see~\cite[Prop.~1.15]{romik2015surprising}).
Therefore we can use it to introduce the norm on the compactly supported Lipshitz functions $\Abs{\cdot}_{Q} = Q[\cdot]^{1/2}$.
By~\cite[Lemma~1.21]{romik2015surprising}, this norm can be used to bound the supremum norm for a compactly supported Lipshitz function $h$ from above:
\begin{equation}
  \label{eq:supremum-norm-and-Q-norm}
  \Abs{h}_{\infty}=\sup_{s} \abs{h} \leq C Q[h]^{1/4}.
\end{equation}
Now write $f_{n}$ as a sum of the limit shape $\Omega$ and a compactly
supported Lipshitz function $\delta f_{n}$:
$f_{n}(s)=\Omega(s)+\delta f_{n}(s)$. Then the probability of the
corresponding diagram $\lambda$ is given by
\begin{equation}
  \label{eq:probability-as-functional}
  \mu(\lambda|q,q^{\pm 1})=\frac{1}{Z_{n}}\exp\left(-\frac{n^{2}}{\gamma^{2}}
    \bigl( J[\Omega]+Q[\delta f_{n}]+2B[\Omega,\delta f_{n}]+L^{\pm}[\delta f_{n}] \bigr) \right)
\end{equation}
We estimate the linear term $2B[\Omega,\delta f_{n}]+L^{\pm}[\delta f_{n}]$ in the same way as in the proof of~\cite[Lemma~7]{NNP20} and use~\eqref{eq:variational-solution-ell} to demonstrate that it is non-negative. It is essentially another way to state that $\Omega$ is a minimizer. Therefore for a Young diagram $\lambda$ with the upper boundary $f_{n}$ such that
$Q[\delta f_{n}]=\varepsilon^{2}$ we have
\begin{equation}
\mu(\lambda|q,q^{\pm 1}) \leq C_{1}\exp\left(-\frac{n^{2}}{\gamma^{2}}\varepsilon^{2}+\mathcal{O}(n\ln n)\right).
\end{equation}
The number of Young diagrams inside the $n\times k$ box is equal to $\binom{n+k}{n}\le \exp\left(c_2 n\right)$. Therefore, the probability of large fluctuations from the limit shape goes to zero:
  \begin{equation}
    \label{eq:prob-sup-norm-to-zero}
    \mathbb{P}\left(\Abs{f_{n}-\Omega}_{Q} > \varepsilon \right) < C_{1} \exp\left(-\frac{n^{2}}{\gamma^{2}}\varepsilon^{2}+\mathcal{O}(n\ln n)\right)\exp\left(c_2 n\right) \xrightarrow[n\to\infty]{} 0.
  \end{equation}
Finally, we use~\eqref{eq:supremum-norm-and-Q-norm} to conclude that the convergence to the limit shape is uniform.

\subsection{Constant \texorpdfstring{$q$}{q}}
\label{sec:constant-q}

If $q=\mathrm{const}$, we still have $q$-Krawtchouk ensemble, but the limit shape degenerates. It can be derived by taking the limits in Equations~\eqref{eq:zdz-S-eq-zero-principal} and~\eqref{eq:zdz-S-biprinicipal-roots} as described below.
Assume that  $x_{i}=\alpha q^{i-1}$, $y_{j}=q^{b(j-1)}$, then we have $\gamma=-n\ln q$ and $\delta=-bn\ln q$. Denote the corresponding probability measure on Young diagrams by $\mu_{n,k}^{b}(\lambda|q)$. 
Substituting to \eqref{eq:zdz-S-eq-zero-principal}, we get for real $t$
\begin{equation}
  \label{eq:zdz-S-eq-zero-q-const}
  t=-\frac{1}{n\ln q}(\ln(1-\alpha q^{n}z)-\ln(1-\alpha z))-\frac{1}{nb\ln q}(\ln(z+1)-\ln(z+q^{nbc})).
\end{equation}
It is more convenient to exponentiate this equation:
\begin{equation}
  \label{eq:zdz-S-eq-zero-q-const-exponentiated}
  q^{-n(t+1)}=\frac{q^{-n}-\alpha z}{1- \alpha z}\left(\frac{z+ q^{nbc}}{z+1}\right)^{-\frac{1}{b}}
\end{equation}
Assuming $q>1$, $c>1$, $b<0$ we look for a pair of complex conjugate asymptotic solutions in the form $z_{\pm}=q^{un}e^{\pm\imi\varphi}$ with $u<0$.
Such pair is not real only if the leading term on the right-hand side is of the form  $-z^{1-\frac{1}{b}}=q^{un(1-\frac{1}{b})}\cdot e^{\imi\pi\pm\imi\varphi(1-\frac{1}{b})}$, therefore $\max(-1, bc) \le u \le 0$.
On the left hand side we have $q^{-n(t+1)}$, and comparing these terms we get $u=-\frac{b(t+1)}{b-1}$, $\varphi=\frac{b\pi}{b-1}$.
Thus we get constant density $\rho(t)=\frac{\varphi}{\pi}=\frac{b}{b-1}$ for $-1\le t\le \min(-\frac1b, c-1-bc)$. 

For $q>1$, $b>0$ the same argument shows that $z$ should grow at least as fast as $q^{nbc}$, therefore $u\ge bc>0$.
However in this case the absolute value of the right-hand side tends to $1$ and the limit shape degenerates. 

Another way to derive the support of the limit shape is to do the same substitution in Equation~\eqref{eq:zdz-2-S-eq-zero}, find the roots $z_{\pm}$, substitute to~\eqref{eq:zdz-S-eq-zero-q-const}, and only then take the limit $n\to\infty$, assuming $q>1$.
Thus Equation~\eqref{eq:zdz-2-S-eq-zero} takes the form
\begin{equation}
  \frac{z}{nb\ln q}\left(\frac{\alpha b}{\alpha z-1}-\frac{\alpha b}{\alpha z-q^{-n}}-\frac{1}{z+1}+\frac{1}{z+q^{nbc}}\right)=0.
\end{equation}

Taking the terms to the common denominator we obtain a cubic equation in the numerator with one of the roots $z_{0}=0$. As before, denote the other two roots by $z_\pm$. For the degenerate case $b>0$ the actual values for these roots are not important, as we have the degeneration $t_\pm = 0$ for $q<1$ and $t_\pm = c-1$ for $q>1$ for any finite nonzero values $z_\pm$. Thus the diagram is empty for $q<1, b>0$ and is completely filling the rectangle for $q>1, b>0$. For other cases we should take into account the asymptotic behaviour of the roots. For example, for the case $b<0$, $q>1$, $bc<-1$ we have $z_- = \alpha \frac{1-b}{\alpha+b}$, $z_+ = \bigO(q^{-n})$, and we recover the endpoints $t_-=-1$, $t_+=-\frac{1}{b}$. This allows us to conjecture the following asymptotic behavior of the diagrams.

\begin{conj}
\label{conj:constant_limit}
Consider the specializations $x_i = q^{i-1}$ and $y_j = q^{b(j-1)}$.
\begin{enumerate}
\item For $b < 0$, the limit density of the measure $\mu_{n,k}^b$ is $\rho(t) = b / (b-1)$.
Moreover, the limit shape $\Omega(t)$ is a straight line with slope $\frac{1+b}{1-b}$ with the left (resp.\ right) support endpoint being~$1$ for $q > 1$ (resp.~$c$ for $q < 1$); explicitly $\Omega(t)=\frac{1+b}{1-b}t+\frac{2}{1-b}$ (resp.~$\Omega(t)=\frac{1+b}{1-b}t-\frac{2bc}{1-b}$).

\item For $b > 0$, the limit shape for $\mu_{n,k}^b$ is the empty (resp.\ full) diagram for $q < 1$ (resp. $q > 1$).
\item The correlation kernel for $t$ in the support interval converges to the discrete sine kernel
  \begin{equation*}
  \lim_{n,k\to\infty}\mathcal{K}(nt+l,nt+l') = \begin{cases}
        \dfrac{\sin\bigl( \pi \rho(t) \cdot (l-l') \bigr)}{\pi(l-l')} & \text{if } l \neq l', \\
        \rho(t) & \text{if } l = l'.
      \end{cases}
    \end{equation*}
  \end{enumerate}
\end{conj}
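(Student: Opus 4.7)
The strategy is a direct saddle-point analysis of the integral representation~\eqref{eq:correlation-kernel-integral-representation}, since the specializations $x_i = q^{i-1}$, $y_j = q^{b(j-1)}$ grow exponentially and therefore do not fit the piecewise-$\mcC^1$ framework of Theorem~\ref{thm:correlation-kernel-bulk}. The natural rescaling for the saddle-point variable is $z = q^{un} e^{i\varphi}$ with $u \in \RR$ and $\varphi \in (-\pi, \pi]$, since the poles $\{q^{-(i-1)}\}$ and $\{-q^{b(j-1)}\}$ of the integrand are exponentially spaced on the real axis. The proof then reduces to: (a) showing that a pair of complex-conjugate saddles $z_\pm = q^{un} e^{\pm i\varphi}$ exists in the specified regime with the prescribed values of $u$ and $\varphi$; (b) verifying that the contours in~\eqref{eq:correlation-kernel-integral-representation} can be deformed through these saddles without crossing the poles; and (c) running the standard steepest-descent machinery to extract the discrete sine kernel with $\rho = \varphi/\pi$.

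For part (1), I would analyze the exponentiated critical-point equation~\eqref{eq:zdz-S-eq-zero-q-const-exponentiated} in the regime $\max(-1, bc) < u < 0$ (with $b < 0$, $q > 1$). Under these conditions the right-hand side simplifies asymptotically: $q^{-n} - \alpha z \sim -\alpha z$, $1 - \alpha z \sim 1$, $z + q^{nbc} \sim z$, and $z + 1 \sim 1$, so the equation reduces to $q^{-n(t+1)} \sim -\alpha z^{(b-1)/b}$. Matching absolute values yields the linear relation $u = -b(t+1)/(b-1)$, while matching arguments modulo $2\pi$ yields $\varphi = b\pi/(b-1)$. The density is then the $t$-independent value $\rho(t) = \varphi/\pi = b/(b-1)$. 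By Theorem~\ref{thm:limit-shape}, the limit shape is linear with slope $(1+b)/(1-b)$, and the boundary of the $u$-regime fixes the endpoints of the support: $u = 0$ corresponds to $t = -1$, and $u = bc$ corresponds to $t = c(1-b) - 1$. The intercepts are pinned down by the geometric boundary conditions $\Omega(-1) = 1$ when $q > 1$ and $\Omega(c) = c$ when $q < 1$. The case $q < 1$ is analogous with $u > 0$ in the regime analysis and the $(x,y)$-roles interchanging their dominant behavior.

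For part (2), the saddle-point analysis degenerates: with $b > 0$, the factors $q^{i-1}$ and $q^{b(j-1)}$ grow (or decay) in the same direction, so the right-hand side of~\eqref{eq:zdz-S-eq-zero-q-const-exponentiated} has magnitude bounded away from the relevant scale of the left-hand side across all $u$, and no pair of complex-conjugate roots appears. I would instead use a direct probabilistic argument based on the sampling algorithm of Section~\ref{sec:skew-howe-duality}: for $b > 0$ and $q < 1$, the entry probabilities $x_i y_j/(1+x_i y_j)$ tend to $0$ uniformly as $n \to \infty$, forcing the matrix $M$ to be the zero matrix with probability tending to $1$, hence $\lambda = \emptyset$; the case $b > 0$, $q > 1$ is the symmetric (complementary) statement. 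For part (3), once the complex saddles are in place, I would carry out the standard steepest-descent argument (as in Section~\ref{sec:bulk-asymptotics}): near each $z_\pm$ the action is quadratic, short descent arcs through the saddles give Gaussian integrals, and these combine after rescaling $l, l'$ to the exact value $\sin(\pi\rho(l-l'))/(\pi(l-l'))$.

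The \textbf{main obstacle} is providing rigorous uniform error control for the asymptotic approximations used to extract $\varphi$. Each sum $\sum_i (1 - q^{i-1}z)^{-1}$ is dominated by $O(1)$ terms in its transition region (where $q^{i-1}|z| \approx 1$), so to pin down $\varphi$ rather than just the leading-order $u$, one needs a sub-leading analysis of those transition contributions; a useful tool here is to express the sum as a contour integral via Poisson summation or via an Erd\H{o}s--Szekeres style partial-fraction identity. A related difficulty is verifying the contour deformation for step (b): since the poles are exponentially spaced and cluster near $z = 0$ (for $q > 1$) or near $z = \infty$ (for $q < 1$), picking the steepest-descent paths through $z_\pm$ without entanglement with the pole set requires care, and may need the $w$-contour to be taken as an exponentially small circle (cf.\ the right panel of Fig.~\ref{fig:contour_cartoons}).
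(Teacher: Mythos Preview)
The statement you are trying to prove is labeled a \emph{conjecture} in the paper, and the paper does not provide a proof. What the paper does provide is heuristic evidence, and that evidence is exactly the asymptotic analysis of the exponentiated critical-point equation that you outline in part~(1): substitute $z = q^{un}e^{\pm i\varphi}$ into~\eqref{eq:zdz-S-eq-zero-q-const-exponentiated}, identify the dominant behavior of each factor in the regime $\max(-1,bc)<u<0$, and match modulus and argument to read off $u$ and $\varphi$. The paper then explicitly says that ``the proofs require another technique'' and lists this as an open problem in Section~\ref{sec:conclusion-outlook}. So your proposal is not competing with a proof in the paper; it is a sketch of how one might try to upgrade the paper's heuristics to a proof, and you have correctly isolated the two genuine obstacles (uniform error control in the subleading terms that determine $\varphi$, and the contour deformation through an exponentially spaced pole set).

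One small correction to your part~(2): the entry probabilities do \emph{not} tend to $0$ uniformly (for $i=j=1$ the probability is $1/2$ regardless of $n$). The correct version of your probabilistic argument is that for $q<1$, $b>0$ the expected total number of ones in $M$ is bounded by $\sum_{i,j} q^{(i-1)+b(j-1)} \le (1-q)^{-1}(1-q^b)^{-1}$, which is $O(1)$; hence $|\lambda|=O(1)$ with high probability and the rescaled diagram is empty. The $q>1$ case follows by the complementary estimate. This direct argument is actually more than the paper offers for part~(2), where the degeneration $t_\pm = 0$ (resp.\ $t_\pm = c-1$) is again only read off heuristically from the saddle-point equation.
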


\begin{figure}
\[
\begin{tikzpicture}[scale=1]
\draw[->,thin,gray] (-3,0) -- (3,0);
\draw[->,thin,gray] (0,-2) -- (0,2);
\draw[blue,thick] (0,0) circle (.7);
\draw[red,thick] (0,2) -- (0,-2);
\fill[black] (0,.7) circle (.07) node[anchor=south west] {$z_1$};
\fill[black] (0,-.7) circle (.07) node[anchor=north west] {$z_2$};
\draw[->,>=latex] (.1,-.6) -- (.1,-.15);
\draw[->,>=latex] (.1,.6) -- (.1,.15);
\fill[UQpurple] (-.1,-.1) rectangle (.1,.1);
\fill[HUgreen] (1.6-.1,-.1) rectangle (1.6+.1,.1) node[anchor=north,inner sep=13pt,scale=.8] {$f(1)^{-1}$};
\draw[HUgreen,line width=2pt] (2.7,.1) -- (2.7,-.1) node[anchor=north] {$x_n^{-1}$};
\draw[->,>=latex,HUgreen] (2.5,.1) -- (1.8,.1);
\fill[HUgreen] (-1.6-.1,-.1) rectangle (-1.6+.1,.1) node[anchor=north,inner sep=13pt,scale=.8] {$-g(0)$};
\draw[HUgreen,line width=2pt] (-2.7,.1) -- (-2.7,-.1) node[anchor=north] {$-y_1$};
\draw[->,>=latex,HUgreen] (-2.5,.1) -- (-1.8,.1);
\end{tikzpicture}
\qquad\qquad
\begin{tikzpicture}[scale=1]
\draw[->,thin,gray] (-3,0) -- (3,0);
\draw[->,thin,gray] (0,-2) -- (0,2);
\draw[blue,thick] (0,0) circle (1.5);
\draw[red,thick] (60:2) -- (0,0) -- (-60:2);
\fill[black] (60:1.5) circle (.07) node[anchor=west] {\raisebox{6pt}{$z_1 \sim q^{-n} e^{i\varphi}$}};
\fill[black] (-60:1.5) circle (.07) node[anchor=west] {$z_2 \sim q^{-n} e^{-i\varphi}$};
\draw[->,>=latex] ++(0,.2) + (60:1.2) -- +(60:.15);
\draw[->,>=latex] ++(0,-.2) + (-60:1.2) -- +(-60:.15);
\fill[UQpurple] (-.1,-.1) rectangle (.1,.1);
\draw[HUgreen,line width=2pt] (1.6,.1) -- (1.6,-.1) node[anchor=north west] {$q^{1-n}$};
\draw[->,>=latex,HUgreen] (1.45,.1) -- (.2,.1);
\draw[HUgreen,line width=2pt] (-1.6,.1) -- (-1.6,-.1) node[anchor=north east] {$-q^{-n-b}$};
\draw[->,>=latex,HUgreen] (-1.45,.1) -- (-.2,.1);
\end{tikzpicture}
\]
\caption{Local contour behaviors for asymptotics at the corner: (left) discrete Hermite kernel for $q\to 1$, where $g(0)=1, f(1)=\alpha e^{\gamma}$, (right) constant $q$, $\varphi=\frac{b}{b-1}$.}
\end{figure}
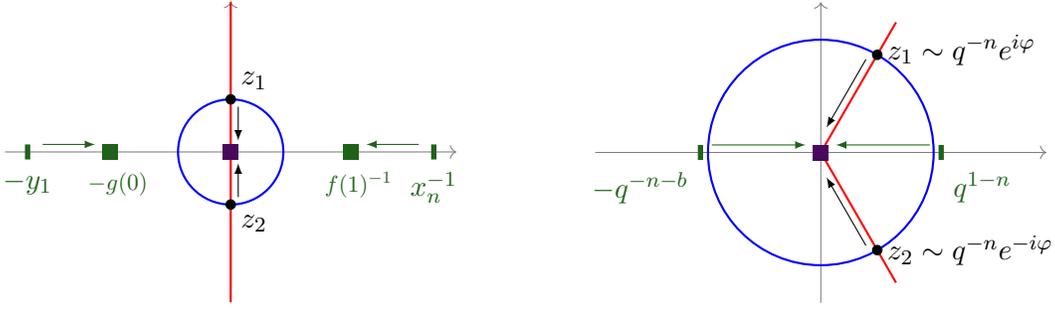

In particular, if $b = -\frac{1}{c}$, then the limit shape for $q > 1$ ends at the right corner. We conjecture that fluctuations in the corner are described by a $q$-analogue of the discrete Hermite kernel. This kernel should be close to the discrete Hermite kernel for $q$ close to $1$. 
In Fig.~\ref{fig:q-const-first-row-distributions}, we present the sample distributions of the first row length for the cases $q=1.2, k>n$ and $q=0.8, n<k$ and we see that they are close to the discrete Hermite distributions. % with $s$ given by Equation~\eqref{eq:expression-for-s}.
%Taking complementary diagram is equivalent to the change of the sign of the parameter $s$ as demonstrated in Fig.~\ref{fig:q-const-first-row-distribution-for-arb-b}.
%\travis{This paragraph is no longer valid/correct from what Anton said.}

\begin{figure}
  \centering
  \includegraphics{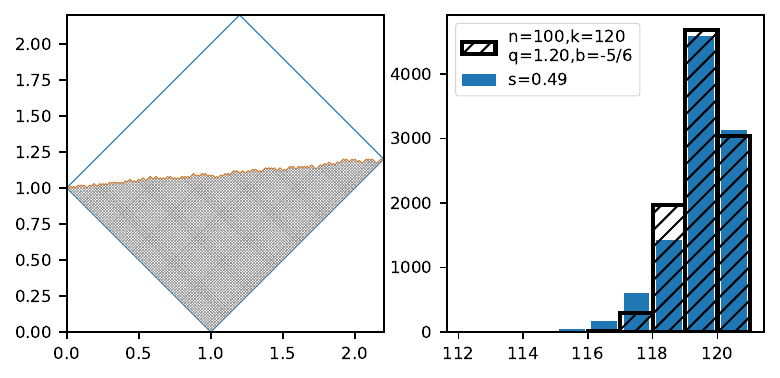}
  \includegraphics{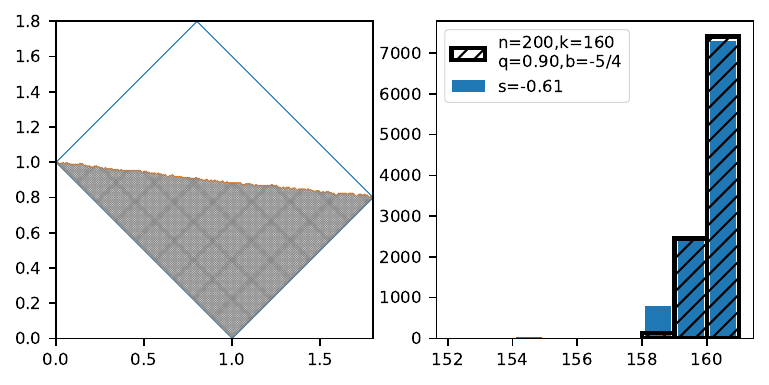}
  \caption{Plots of the limit random Young diagrams, the discrete Hermite kernel (solid blue), and the distribution of the first row (black hatched) for $n=100$, $k=120$, $q=1.2$, $b=-n/k$ and $s=0.49$ (above), and for $n=200$, $k=160$, $q=0.9$, with $s=-0.61$ (below).  }
  \label{fig:q-const-first-row-distributions}
\end{figure}

%%  \begin{figure}
%%    \centering
%%    \includegraphics{right-corner-limit-shape-and-fluctuations-n-100-k-200-q-0.80-b--0.80-s-0.45-niter-10000.pdf}
%%    \includegraphics{left-corner-limit-shape-and-fluctuations-n-100-k-200-q-1.25--0.80-s--0.45-niter-10000.pdf}
%%    \caption{Plot of the random Young diagram, discrete Hermite kernel (solid blue) and the distribution of the first row (resp.\ first column) for $n=100$, $k=200$, $q=0.8$, $b=-0.8$, and $s=0.45$ above (resp.\ for $n=100$, $k=200$, $q=1.25$, $b=-0.8$, with $s=-0.45$ below).  }
%%    \label{fig:q-const-first-row-distribution-for-arb-b}
%%  \end{figure}
%%  
Similar to the constant case in Section \ref{sec:piec-const-spec}, we can extend the analysis in this section to the case of multiple parameters.
We can take $x_{i}=q^{\alpha_{1}i}$ for $i=1,\dotsc,\lfloor A_{1}n\rfloor$, $x_{i}=q^{\alpha_{2}i}$ for $i=\lceil A_{1}n\rceil,\dotsc,\lfloor (A_{1}+A_{2})n\rfloor$ and so on with constants $\alpha_{1},\dotsc, \alpha_{u}$ and shares $A_{1},\dotsc,A_{u}$, where $\sum_{i=1}^{u}A_{i}=1$.
Similarly denoting the constants for $y_{j}$ by $\beta_{1},\dotsc, \beta_{v}$  and shares by $B_{1},\dotsc,B_{v}$ such that $\sum_{j=1}^{v}B_{j}=c$, we get a higher order polynomial equation instead of \eqref{eq:zdz-S-eq-zero-q-const-exponentiated} that can be solved asymptotically in the same way. We then conjecture the convergence of the correlation kernel to the sine kernels with constant densities on multiple intervals as demonstrated in Fig.~\ref{fig:q-const-several-intervals}.

\begin{figure}
  \centering
  \includegraphics[width=.45\linewidth]{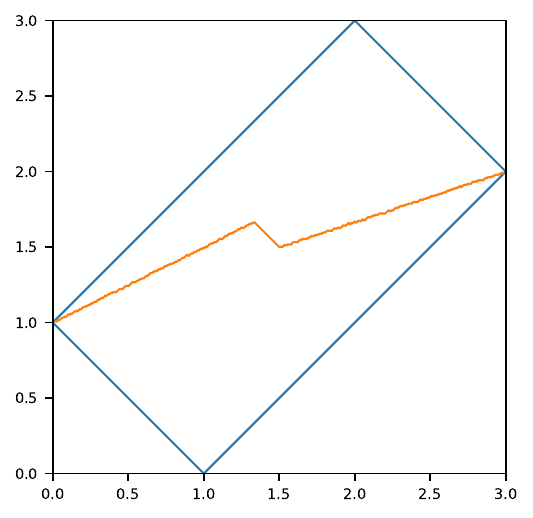}
  \includegraphics[width=.45\linewidth]{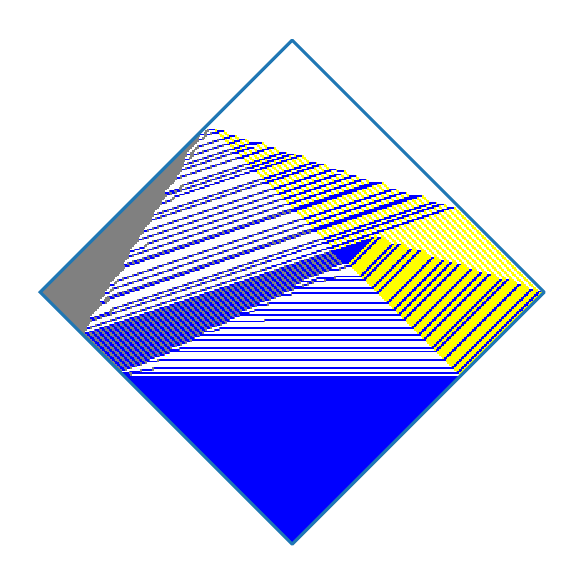}
  \caption{Plot of the random Young diagram for $q=10, n=200, k=400$ with specializations $x_{i}=q^{i}$, $y_{j}=q^{-\frac{1}{3}(j-1)}$ for $j\in[1,\dotsc,k/2]$ and $y_{j}=q^{-\frac{1}{2}(j-1)}$ for $j\in[k/2,\dotsc,k]$ (on the left) and domino tiling of Aztec diamond for $n=50, k=100$ for the same choice of parameters $x_{i},y_{j}$. }
  \label{fig:q-const-several-intervals}
\end{figure}

% ===============================================================================
\section{Conclusion and open problems}
\label{sec:conclusion-outlook}

We have proven the asymptotics of determinantal ensembles in various regimes for general specializations.
In particular, we have considered limit shapes for single-interval and multi-interval support, as well as bulk, edge, Pearcey and corner fluctuations.
Nevertheless some questions remain open, and we list them below. 

\begin{enumerate}
\item In the analysis of bulk asymptotics we have used that
  $f(s)>0, g(s)\geq 0$ to demonstrate convergence to the discrete sine
  kernel. We expect this result to hold for $f(s)\geq 0$. One might
  need to estimate the number of complex roots for transcendental
  equations in this case.
\item We have demonstrated that fluctuations near the corner are
  described by the discrete Hermite kernel. It remains an open problem
  to prove that conditions for the limit shape to end in the corner
  stated in Conjecture~\ref{conj:critical_classification} are
  equivalent.
\item In the multi-interval case it would be interesting to check for
  the existence of the higher-order Airy-like (or Pearcey-like) behavior.
\item The case of principal specialization with $q=\mathrm{const}$
  considered in Section \ref{sec:constant-q} is not included in the general
  setup of the present paper. The use of asymptotic solutions allows us to
  conjecture the limit shape in this case, and we have numerical
  evidence for the fluctuations. Yet the proofs require another
  technique. For $b>0$, the limit shape degenerates to empty or
  fully-filled diagram, but it is possible that there is non-trivial
  behavior in the corner after suitable rescaling.
\item \label{Q:dual_pairs} Similar analysis for other classical dual pairs of Lie groups is
  another promising direction of research. In general, limit shape for
  the measures, given by skew Howe duality for pairs
  $(\SO_{2n+1}, \Pin_{2k})$, $(\Sp_{2n}, \Sp_{2k})$, and
  $(\Or_{2n}, \SO_{k})$ are known only for the case when all
  specialization parameters are equal to $1$ and the probability of a
  diagram is proportional to the dimension of an irreducible component
  \cite{nazarov2021skew}. The fluctuations in this case can be studied
  using semiclassical orthogonal polynomials as demonstrated in
  \cite{NNP20}, but detailed analysis of various asymptotic regimes
  have not been carried out. In Section \ref{sec:gl=2sp} we have
  formulated a hypothesis for the limit shape of symplectic diagrams
  for a general specialization which can be stated for other pairs as
  well. A free fermionic approach might be used to establish this
  result, but it requires additional techniques.
  On the other hand, $(\GL_{n},\Sp_{2k})$ Howe duality can be treated using the free fermion formalism as demonstrated in~\cite{Betea18} or a linear algebraic approach via the Eynard--Mehta theorem (as discussed in~\cite{BR05}) as demonstrated in~\cite{cuenca2024symplecticschurprocess}.
\item Study of transition probabilities for the presented limit shapes
  can be also of interest. For example, for Schur--Weyl duality the
  transition probability converges to Marchenko--Pastur law as
  demonstrated
  in~\cite{biane2001approximate,meliot2010kerovscentrallimittheorem,bufetov2013kerov}.
  We expect to see the Marchenko--Pastur distribution for the
  single-interval support of the limit shape. The multi-interval case
  is an open question.
\item Another reasonable problem is to compute the entropy of the
  measures considered in the present paper, similar to what was done
  for the Plancherel measure~\cite{bufetov2010vershik} and Schur--Weyl
  measure~\cite{mkrtchyan2014entropy}.
\item Moreover, one can study limit shapes of the lozenge or domino
  tilings, presented in Fig.~\ref{fig:lozenge-aztec-tilings}
  and Fig.~\ref{fig:gl-pearcey-lozenge-tiling} by using well-known
  techniques of~\cite{OR03,tracy2006pearcey,okounkov2007random}. In
  these cases we can consider limit shapes to be surfaces in
  three-dimensional space. Note, that rearrangement of the
  specialization parameters $x_{i}$, $y_{j}$ drastically changes the
  picture here, as demonstrated in
  Fig.~\ref{fig:gl-pearcey-lozenge-tiling-different-order}; contrast
  this with Fig.~\ref{fig:gl-pearcey-lozenge-tiling}. In the case of
  $q=\mathrm{const}$, we expect the limit surface to be piecewise
  flat, as can be seen in the right panel of
  Fig.~\ref{fig:q-const-several-intervals}.
\end{enumerate}

\begin{figure}
  \begin{center}
  \includegraphics[width=15cm]{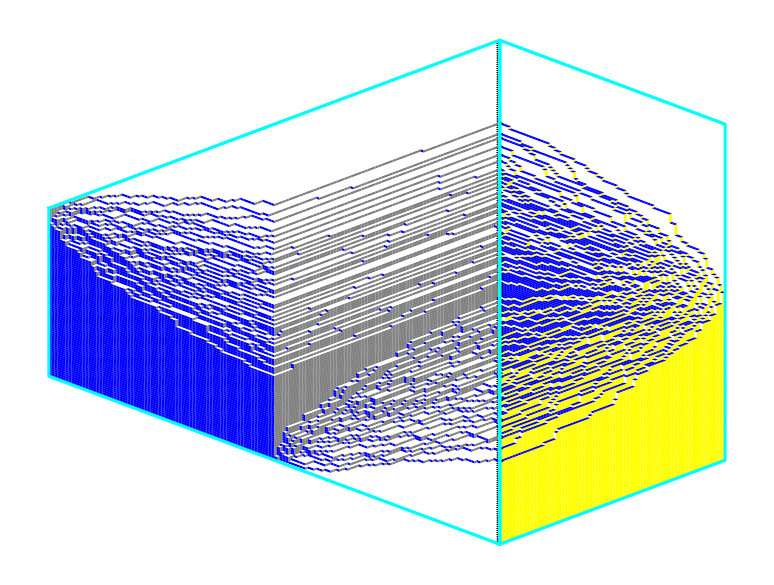}
  \end{center}
  \caption{Lozenge tiling of a skew hexagon corresponding to Pearcey
    regime for $n=40, k=80, c=2, \beta=2c+1+ 2\sqrt{c(c+1)}\approx
    9.9$  for specializations $\{x_{i}\}_{i=1}^{2n}=(1,1\dotsc,1)$ and
    $\{y_{j}\}_{j=1}^{2k}=(\beta^{-1},\dotsc,\beta^{-1}, \beta,\dotsc,\beta)$.}
  \label{fig:gl-pearcey-lozenge-tiling-different-order}
\end{figure}

\appendix
%===============================================================================
\section{Sampling code}

We provide some code to generate the samples using \textsc{SageMath}~\cite{sage}.
However, we have used specialized code to generate our figures.

\begin{lstlisting}
def sample(n, k, f, g, **kwds):
    M = matrix([[0 if random() <= 1 / (1 + f(i/n) * g(j/k)) else 1
                 for j in range(1,k+1)] for i in range(1,n+1)])
    P,Q = RSK(M, insertion=RSK.rules.dualRSK)
    data = list(P.shape())
    data += [0] * (n - len(data))
    data = (((val-i)/n, (val+i)/n) for i,val in enumerate(data))
    P = polygon2d([(-1,1), (0,0), (k/n,k/n), (k/n-1,k/n+1)],
                  color='black', fill=False)
    P += line(data, thickness=2, **kwds)
    P.set_aspect_ratio(1)
    return P
\end{lstlisting}

We use this as

\begin{lstlisting}
sage: sample(200, 300, lambda x: x^.2, lambda y: y^5).show(figsize=20)
\end{lstlisting}

\bibliographystyle{alpha}
\bibliography{shapes}{}

\newcommand{\etalchar}[1]{$^{#1}$}
\begin{thebibliography}{GTW02b}

\bibitem[Bak96]{Baker_1996}
T.~H. Baker.
\newblock Vertex operator realization of symplectic and orthogonal
  {$S$}-functions.
\newblock {\em J. Phys. A}, 29(12):3099--3117, 1996.

\bibitem[BBC{\etalchar{+}}17]{BBCCR17}
C\'{e}dric Boutillier, J\'{e}r\'{e}mie Bouttier, Guillaume Chapuy, Sylvie
  Corteel, and Sanjay Ramassamy.
\newblock Dimers on rail yard graphs.
\newblock {\em Ann. Inst. Henri Poincar\'{e} D}, 4(4):479--539, 2017.

\bibitem[BBW24]{betea2023multicritical}
Dan Betea, J\'er\'emie Bouttier, and Harriet Walsh.
\newblock Multicritical {S}chur measures and higher-order analogues of the
  {T}racy-{W}idom distribution.
\newblock {\em Math. Phys. Anal. Geom.}, 27(1):Paper No. 2, 58, 2024.

\bibitem[BDJ99]{BDJ99}
Jinho Baik, Percy Deift, and Kurt Johansson.
\newblock On the distribution of the length of the longest increasing
  subsequence of random permutations.
\newblock {\em J. Amer. Math. Soc.}, 12(4):1119--1178, 1999.

\bibitem[Ber86]{berele1986schensted}
Allan Berele.
\newblock A {S}chensted-type correspondence for the symplectic group.
\newblock {\em J. Combin. Theory Ser. A}, 43(2):320--328, 1986.

\bibitem[Bet18]{Betea18}
Dan Betea.
\newblock Correlations for symplectic and orthogonal {S}chur measures.
\newblock Preprint, \arxiv{1804.08495}, 2018.

\bibitem[BH98]{brezin1998universal}
E.~Br{\'e}zin and S.~Hikami.
\newblock Universal singularity at the closure of a gap in a random matrix
  theory.
\newblock {\em Phys. Rev. E (3)}, 57(4):4140--4149, 1998.

\bibitem[Bia01]{biane2001approximate}
Philippe Biane.
\newblock Approximate factorization and concentration for characters of
  symmetric groups.
\newblock {\em Int. Math. Res. Not.}, 2001(4):179--192, 2001.

\bibitem[BK10]{BK10}
Alexei Borodin and Jeffrey Kuan.
\newblock Random surface growth with a wall and {P}lancherel measures for {$\rm
  O(\infty)$}.
\newblock {\em Comm. Pure Appl. Math.}, 63(7):831--894, 2010.

\bibitem[BO00]{borodin2000fredholm}
Alexei Borodin and Andrei Okounkov.
\newblock A {F}redholm determinant formula for {T}oeplitz determinants.
\newblock {\em Integral Equations Operator Theory}, 37(4):386--396, 2000.

\bibitem[BO05a]{BO05}
Alexei Borodin and Grigori Olshanski.
\newblock Harmonic analysis on the infinite-dimensional unitary group and
  determinantal point processes.
\newblock {\em Ann. of Math. (2)}, 161(3):1319--1422, 2005.

\bibitem[BO05b]{BO05gamma}
Alexei Borodin and Grigori Olshanski.
\newblock Random partitions and the gamma kernel.
\newblock {\em Adv. Math.}, 194(1):141--202, 2005.

\bibitem[BO05c]{BO05II}
Alexei Borodin and Grigori Olshanski.
\newblock Representation theory and random point processes.
\newblock In {\em European {C}ongress of {M}athematics}, pages 73--94. Eur.
  Math. Soc., Z\"{u}rich, 2005.

\bibitem[BO06]{BO06}
Alexei Borodin and Grigori Olshanski.
\newblock Meixner polynomials and random partitions.
\newblock {\em Mosc. Math. J.}, 6(4):629--655, 771, 2006.

\bibitem[BO07]{borodin2007asymptotics}
Alexei Borodin and Grigori Olshanski.
\newblock Asymptotics of {P}lancherel-type random partitions.
\newblock {\em J. Algebra}, 313(1):40--60, 2007.

\bibitem[BO17]{borodin2017asep}
Alexei Borodin and Grigori Olshanski.
\newblock The {ASEP} and determinantal point processes.
\newblock {\em Comm. Math. Phys.}, 353(2):853--903, 2017.

\bibitem[BOO00]{borodin2000asymptotics}
Alexei Borodin, Andrei Okounkov, and Grigori Olshanski.
\newblock Asymptotics of {P}lancherel measures for symmetric groups.
\newblock {\em J. Amer. Math. Soc.}, 13(3):481--515, 2000.

\bibitem[BR05]{BR05}
Alexei Borodin and Eric~M. Rains.
\newblock Eynard--{M}ehta theorem, {S}chur process, and their {P}faffian
  analogs.
\newblock {\em J. Stat. Phys.}, 121(3-4):291--317, 2005.

\bibitem[BS17]{BS17}
Daniel Bump and Anne Schilling.
\newblock {\em Crystal bases}.
\newblock World Scientific Publishing Co. Pte. Ltd., Hackensack, NJ, 2017.
\newblock Representations and combinatorics.

\bibitem[Buf10]{bufetov2010vershik}
A.~I. Bufetov.
\newblock Solution of the {V}ershik--{K}erov conjecture on the entropy of the
  {P}lancherel measure.
\newblock {\em Uspekhi Mat. Nauk}, 65(1(391)):181--182, 2010.

\bibitem[Buf13]{bufetov2013kerov}
Alexey Bufetov.
\newblock Kerov's interlacing sequences and random matrices.
\newblock {\em J. Math. Phys.}, 54(11):113302, 10, 2013.

\bibitem[CM24]{cuenca2024symplecticschurprocess}
Cesar Cuenca and Matteo Mucciconi.
\newblock The symplectic {S}chur process.
\newblock Preprint, \arxiv{2407.02415}, 2024.

\bibitem[Dei99]{deift1999orthogonal}
Percy~A. Deift.
\newblock {\em Orthogonal polynomials and random matrices: a
  {R}iemann--{H}ilbert approach}, volume~3 of {\em Courant Lecture Notes in
  Mathematics}.
\newblock New York University, Courant Institute of Mathematical Sciences, New
  York; American Mathematical Society, Providence, RI, 1999.

\bibitem[Dev24]{sage}
The~Sage Developers.
\newblock {\em {S}age {M}athematics {S}oftware ({V}ersion 10.4)}.
\newblock The Sage Development Team, 2024.
\newblock \url{https://www.sagemath.org}.

\bibitem[GTW01]{GTW01}
Janko Gravner, Craig~A. Tracy, and Harold Widom.
\newblock Limit theorems for height fluctuations in a class of discrete space
  and time growth models.
\newblock {\em J. Statist. Phys.}, 102(5-6):1085--1132, 2001.

\bibitem[GTW02a]{GTW02}
Janko Gravner, Craig~A. Tracy, and Harold Widom.
\newblock Fluctuations in the composite regime of a disordered growth model.
\newblock {\em Comm. Math. Phys.}, 229:433--458, 2002.

\bibitem[GTW02b]{GTW02II}
Janko Gravner, Craig~A. Tracy, and Harold Widom.
\newblock A growth model in a random environment.
\newblock {\em Ann. Probab.}, 30:1340--1368, 2002.

\bibitem[Har71]{Hardy71}
G.~H. Hardy.
\newblock {\em The integration of functions of a single variable}.
\newblock Cambridge Tracts in Mathematics and Mathematical Physics, No. 2.
  Hafner Publishing Co., New York, 1971.
\newblock Reprint of the second edition, 1916.

\bibitem[How89]{howe1989remarks}
Roger Howe.
\newblock Remarks on classical invariant theory.
\newblock {\em Trans. Amer. Math. Soc.}, 313(2):539--570, 1989.

\bibitem[IS98]{ismail1998strong}
Mourad E.~H. Ismail and Plamen Simeonov.
\newblock Strong asymptotics for {K}rawtchouk polynomials.
\newblock {\em J. Comput. Appl. Math.}, 100(2):121--144, 1998.

\bibitem[JLW24]{JLW24}
Naihuan Jing, Zhijun Li, and Danxia Wang.
\newblock Skew symplectic and orthogonal {S}chur functions.
\newblock {\em SIGMA Symmetry Integrability Geom. Methods Appl.}, 20:Paper No.
  041, 23, 2024.

\bibitem[Joh98]{johansson98}
Kurt Johansson.
\newblock On fluctuations of eigenvalues of random {H}ermitian matrices.
\newblock {\em Duke Math. J.}, 91(1):151--204, 1998.

\bibitem[Joh01]{Johansson01}
Kurt Johansson.
\newblock Discrete orthogonal polynomial ensembles and the {P}lancherel
  measure.
\newblock {\em Ann. of Math. (2)}, 153(1):259--296, 2001.

\bibitem[Joh02]{Johansson2002non}
Kurt Johansson.
\newblock Non-intersecting paths, random tilings and random matrices.
\newblock {\em Probab. Theory Related Fields}, 123(2):225--280, 2002.

\bibitem[Kac90]{kac90}
Victor~G. Kac.
\newblock {\em Infinite-dimensional {L}ie algebras}.
\newblock Cambridge University Press, Cambridge, third edition, 1990.

\bibitem[KLS10]{koekoek2010hypergeometric}
Roelof Koekoek, Peter~A Lesky, and Ren{\'e}~F Swarttouw.
\newblock {\em Hypergeometric orthogonal polynomials and their q-analogues}.
\newblock Springer Science \& Business Media, 2010.

\bibitem[Knu70]{knuth1970permutations}
Donald Knuth.
\newblock Permutations, matrices, and generalized {Y}oung tableaux.
\newblock {\em Pacific J. Math.}, 34(3):709--727, 1970.

\bibitem[KZ21]{kimura2021universal}
Taro Kimura and Ali Zahabi.
\newblock Universal edge scaling in random partitions.
\newblock {\em Lett. Math. Phys.}, 111(2):Paper No. 48, 16, 2021.

\bibitem[Lot02]{Lothaire02}
M.~Lothaire.
\newblock {\em Algebraic Combinatorics on Words}, volume~90 of {\em
  Encyclopedia of Mathematics and its Applications}.
\newblock Cambridge University Press, Cambridge, 2002.

\bibitem[Mkr14]{mkrtchyan2014entropy}
Sevak Mkrtchyan.
\newblock Entropy of {S}chur--{W}eyl measures.
\newblock volume~50, pages 678--713, 2014.

\bibitem[Mé10]{meliot2010kerovscentrallimittheorem}
Pierre-Loïc Méliot.
\newblock Kerov's central limit theorem for {S}chur--{W}eyl measures of
  parameter 1/2.
\newblock Preprint, \arxiv{1009.4034}, 2010.

\bibitem[NNP23]{NNP20}
Anton Nazarov, Pavel Nikitin, and Olga Postnova.
\newblock Limit shape for infinite rank limit of tensor power decomposition for
  {L}ie algebras of series {$\mathfrak{so}_{2n+1}$}.
\newblock {\em J. Phys. A}, 56(13):134001, 2023.
\newblock \arxiv{2010.16383}.

\bibitem[NNS22]{nazarov2022skew}
A~Nazarov, P~Nikitin, and D~Sarafannikov.
\newblock Skew {H}owe duality and {$q$}-{K}rawtchouk polynomial ensemble.
\newblock {\em Zap. Nauchn. Sem. POMI}, 517(0):106--124, 2022.
\newblock \arxiv{2208.10331}.

\bibitem[NPS24]{nazarov2021skew}
Anton Nazarov, Olga Postnova, and Travis Scrimshaw.
\newblock Skew howe duality and limit shapes of young diagrams.
\newblock {\em J. Lond. Math. Soc.}, 109(1):e12813, 2024.
\newblock \arxiv{2111.12426}.

\bibitem[Oko01]{Okounkov01}
Andrei Okounkov.
\newblock Infinite wedge and random partitions.
\newblock {\em Selecta Math. (N.S.)}, 7(1):57--81, 2001.

\bibitem[OR03]{OR03}
Andrei Okounkov and Nikolai Reshetikhin.
\newblock Correlation function of {S}chur process with application to local
  geometry of a random 3-dimensional {Y}oung diagram.
\newblock {\em J. Amer. Math. Soc.}, 16(3):581--603, 2003.

\bibitem[OR07]{okounkov2007random}
Andrei Okounkov and Nicolai Reshetikhin.
\newblock Random skew plane partitions and the {P}earcey process.
\newblock {\em Comm. Math. Phys.}, 269(3):571--609, 2007.

\bibitem[Pis69]{piskunov1965differential}
N.~Piskunov.
\newblock {\em Differential and Integral Calculus}.
\newblock Mir Publishers, Moscow, {R}ussian edition, 1969.

\bibitem[PR07]{pittel2007limit}
Boris Pittel and Dan Romik.
\newblock Limit shapes for random square {Y}oung tableaux.
\newblock {\em Adv. in Appl. Math.}, 38(2):164--209, 2007.

\bibitem[Pro93]{proctor1993reflection}
Robert~A. Proctor.
\newblock Reflection and algorithm proofs of some more {L}ie group dual pair
  identities.
\newblock {\em J. Combin. Theory Ser. A}, 62(1):107--127, 1993.

\bibitem[P{\'S}18]{panova2018skew}
Greta Panova and Piotr {\'S}niady.
\newblock Skew {H}owe duality and random rectangular {Y}oung tableaux.
\newblock {\em Algebraic Combin.}, 1(1):81--94, 2018.

\bibitem[Rom15]{romik2015surprising}
Dan Romik.
\newblock {\em The surprising mathematics of longest increasing subsequences},
  volume~4 of {\em Institute of Mathematical Statistics Textbooks}.
\newblock Cambridge University Press, New York, 2015.

\bibitem[Sep98]{Sepp98}
Timo Sepp\"{a}l\"{a}inen.
\newblock Exact limiting shape for a simplified model of first-passage
  percolation on the plane.
\newblock {\em Ann. Probab.}, 26(3):1232--1250, 1998.

\bibitem[ST97]{ST97}
Edward~B. Saff and Vilmos Totik.
\newblock {\em Logarithmic potentials with external fields}, volume 316 of {\em
  Grundlehren der mathematischen Wissenschaften [Fundamental Principles of
  Mathematical Sciences]}.
\newblock Springer-Verlag, Berlin, 1997.
\newblock Appendix B by Thomas Bloom.

\bibitem[Sta99]{ECII}
Richard~P. Stanley.
\newblock {\em Enumerative combinatorics. {V}ol. 2}, volume~62 of {\em
  Cambridge Studies in Advanced Mathematics}.
\newblock Cambridge University Press, Cambridge, 1999.
\newblock With a foreword by Gian-Carlo Rota and appendix 1 by Sergey Fomin.

\bibitem[TW94]{TW94}
C.~A. Tracy and H.~Widom.
\newblock Level-spacing distributions and the {A}iry kernel.
\newblock {\em Comm. Math. Phys.}, 159(1):151--174, 1994.

\bibitem[TW06]{tracy2006pearcey}
Craig~A. Tracy and Harold Widom.
\newblock The {P}earcey process.
\newblock {\em Comm. Math. Phys.}, 263(2):381--400, 2006.

\end{thebibliography}
\end{document}